\tikzset{>={Stealth}}
\def\qed@warning{}
\numberwithin{equation}{section}
\newlist{thmenum}{enumerate}{1}
\setlist[thmenum]{label=(\alph*)}
\newcommand{\nr}{N}
\newcommand{\set}[1]{\left\{ \def\given{\ \middle| \ }  #1 \right\}  }
\newcommand{\defeq}{\mathrel{:=}}
\NewDocumentCommand{\iso}{}{\cong}
\DeclareMathOperator{\tr}{tr}
\DeclareMathOperator{\id}{id}
\DeclareMathOperator{\End}{End}
\renewcommand{\hom}{\operatorname{Hom}}
\renewcommand{\Re}{\operatorname{Re}}
\renewcommand{\Im}{\operatorname{Im}}
\NewDocumentCommand{\du}{}{\amalg}
\DeclareExpandableDocumentCommand{\IfEmptyTF}{mmm}
 {
    \tl_if_empty:nTF {#1} {#2} {#3}
 }
\def\leftfun#1{\mathopen{}\left#1}
\def\rightfun#1{\right#1}
\NewDocumentCommand{\ParenIfNonempty}{m}{%
  \IfEmptyTF{#1}{%
  }{%
    \leftfun(#1\rightfun)%
  }
}
\NewDocumentCommand{\ii}{}{\mathsf{i}}
\NewDocumentCommand{\tu}{}{2 \pi \ii}
\NewDocumentCommand{\evec}{m}{\mathbf{e}_{#1}}
\NewDocumentCommand{\extpower}{O{}}{\mathop{\bigwedge\nolimits^{\!#1}}}
\NewDocumentCommand{\slg}{}{\operatorname{SL}_2(\mathbb{C})}
\NewDocumentCommand{\sla}{}{\mathfrak{sl}_2}
\NewDocumentCommand{\pslg}{}{\operatorname{PSL}_2(\mathbb{C})}
\NewDocumentCommand{\pjsp}{O{\mathbb{C}}}{\operatorname{#1 P}^{1}}
\NewDocumentCommand{\qn}{}{\triangleleft}
\NewDocumentCommand{\qgrplong}{O{\omega}}{\mathcal{U}_{#1}(\mathfrak{sl}_2)}
\NewDocumentCommand{\qgrp}{O{\omega}}{\mathcal{U}_{#1}}
\NewDocumentCommand{\weyl}{O{}}{\mathcal{W}_{#1}}
\NewDocumentCommand{\mer}{}{\mathbf{m}}
\NewDocumentCommand{\lon}{}{\mathbf{l}}
\NewDocumentCommand{\extr}{m}{M_{#1}}
\NewDocumentCommand{\comp}{m}{S^3 \setminus #1}
\NewDocumentCommand{\cohomol}{m m}{\operatorname{H}^{#1}\leftfun(#2\rightfun)}
\NewDocumentCommand{\reid}{m O{}}{
  \bool_case:nTF
  {
    {\str_if_eq_p:NN {#1} {1} } { \operatorname{R}\sb{1} }
    {\str_if_eq_p:NN {#1} {1f} } { \operatorname{R}\sb{1}^{\operatorname{fr}} }
    {\str_if_eq_p:NN {#1} {2} } { \operatorname{R}\sb{2}^{#2} }
    {\str_if_eq_p:NN {#1} {2a} } { \operatorname{R}\sb{2}^{\operatorname{a}} }
    {\str_if_eq_p:NN {#1} {2b} } { \operatorname{R}\sb{2}^{\operatorname{b}} }
    {\str_if_eq_p:NN {#1} {3} } { \operatorname{R}\sb{3} }
  }
  { } %
  { } %
}
\NewDocumentCommand{\modcat}{m}{#1\operatorname{-mod}}
\NewDocumentCommand{\tsunit}{}{\mathbf{1}}
\NewDocumentCommand{\delooping}{}{\mathsf{B}}
\NewDocumentCommand{\sclbrak}{m}{\left\langle #1 \right \rangle}
\NewDocumentCommand{\point}{}{\cdot}
\NewDocumentCommand{\qcat}{O{\nr}}{\mathsf{C}_{#1}}
\NewDocumentCommand{\qcatb}{O{\nr}}{\overline{\mathsf{C}}_{#1}}
\NewDocumentCommand{\tcat}{O{}}{\mathsf{T}_{#1}}
\NewDocumentCommand{\vect}{}{\mathsf{Vect}}
\NewDocumentCommand{\qpf}{O{\nr} m}{\Gamma_{#1}\ParenIfNonempty{#2}}
\NewDocumentCommand{\bra}{m}{\left\langle #1 \right|}
\NewDocumentCommand{\ket}{m}{\left| #1 \right\rangle}
\NewDocumentCommand{\braket}{m m}{\left\langle #1 \middle| #2 \right\rangle}
\NewDocumentCommand{\wb}{m}{v\ParenIfNonempty{#1}}
\NewDocumentCommand{\repvar}{m}{\mathsf{R}\leftfun(#1\rightfun)}
\NewDocumentCommand{\repvarlog}{m}{\widetilde{\mathsf{R}}\leftfun(#1\rightfun)}
\NewDocumentCommand{\admvar}{m}{\mathsf{A}\leftfun(#1\rightfun)}
\NewDocumentCommand{\qfunc}{O{\nr} m}{%
  \mathcal{Z}_{#1}^{\psi}\ParenIfNonempty{#2}
}
\NewDocumentCommand{\qfuncb}{O{\nr} m}{%
  \overline{\mathcal{Z}}^{\psi}_{#1}\ParenIfNonempty{#2}
}
\NewDocumentCommand{\qinv}{O{\nr} m}{%
  \mathrm{Z}_{#1}^{\psi}\ParenIfNonempty{#2}
}
\NewDocumentCommand{\qinvb}{O{\nr} m}{%
  \overline{\mathrm{Z}}_{#1}^{\psi}\ParenIfNonempty{#2}
}
\NewDocumentCommand{\kado}{O{\nr} m}{%
  \mathrm{F}_{#1}\ParenIfNonempty{#2}
}
\NewDocumentCommand{\csfunc}{O{} m}{%
  \mathcal{I}_{#1}^{\psi}\ParenIfNonempty{#2}
}
\NewDocumentCommand{\brkern}{m O{\nr} m m}{%
  \Omega^{#1}_{#2}%
  \IfEmptyTF{#3}{%
    \IfEmptyTF{#4}{}{\leftfun(#4\rightfun)}%
  }{%
    \leftfun(#3%
    \IfEmptyTF{#4}{%
      \rightfun)%
    }{%
      \middle|#4\rightfun)%
    }%
  }
}
\NewDocumentCommand{\octfunc}{m m m}{\brkern{#1}{#2}{#3}}
\NewDocumentCommand{\shadset}{}{\mathsf{U}}
\NewDocumentCommand{\matset}{}{\mathsf{Q}}
\NewDocumentCommand{\pf}{O{} m }{\varphi_{#1}\ParenIfNonempty{#2}}
\NewDocumentCommand{\pft}{O{} m }{\widetilde{\varphi}_{#1}\ParenIfNonempty{#2}}
\NewDocumentCommand{\nrdirac}{m}{\delta_{\nr}\leftfun(#1\rightfun)}
\NewDocumentCommand{\modb}{m O{\nr}}{\left\{#1\right\}_{#2}}
\NewDocumentCommand{\cutoff}{m O{\nr}}{\theta_{#2}\leftfun(#1\rightfun)}
\NewDocumentCommand{\Qfunc}{m}{Q_{\nr}\ParenIfNonempty{#1}}
\NewDocumentCommand{\df}{O{} m }{\operatorname{D}_{#1}\ParenIfNonempty{#2}}
\NewDocumentCommand{\pfl}{O{\nr} m}{\phi_{#1}\ParenIfNonempty{#2}}
\NewDocumentCommand{\qlfl}{O{\nr} m m}{%
  \lambda_{#1}\leftfun(#2
  \IfEmptyTF{#3}{%
    \rightfun)
  }{%
    \middle|#3\rightfun)%
  }
}
\NewDocumentCommand{\qlog}{m m}{w\leftfun(#1 \middle | #2 \rightfun)}
\NewDocumentCommand{\qp}{m O{\omega} m}{\left(#1 ; #2\right)_{#3}}
\NewDocumentCommand{\fstack}{m m m}{f\leftfun( \begin{matrix} #1 \\ #2 \end{matrix}\middle| #3 \rightfun)}
\NewDocumentCommand{\ldil}{m}{\lambda\ParenIfNonempty{#1}}
\newcommand{\evup}[1]{\operatorname{ev}^\uparrow_{#1}}
\newcommand{\coevup}[1]{\operatorname{coev}^\uparrow_{#1}}
\newcommand{\evdown}[1]{\operatorname{ev}^\downarrow_{#1}}
\newcommand{\coevdown}[1]{\operatorname{coev}^\downarrow_{#1}}
\NewDocumentCommand{\vb}{ m}{v_{#1}}
\NewDocumentCommand{\rmat}{o m m m m }{
  \IfNoValueTF{#1}{%
    \widehat{R}_{#2 #3}^{#4 #5}
  }{%
    \widehat{R}(#1)_{#2 #3}^{#4 #5}
  }
}
\NewDocumentCommand{\rmatm}{o m m m m }{
  \IfNoValueTF{#1}{%
    \widehat{\overline{R}}_{#2 #3}^{#4 #5}
  }{%
    \widehat{\overline{R}}(#1)_{#2 #3}^{#4 #5}
  }
}
\NewDocumentCommand{\lN}{}{\mathrm{N}}
\NewDocumentCommand{\lW}{}{\mathrm{W}}
\NewDocumentCommand{\lS}{}{\mathrm{S}}
\NewDocumentCommand{\lE}{}{\mathrm{E}}
\NewDocumentCommand{\up}{}{\uparrow}
\NewDocumentCommand{\dn}{}{\downarrow}
\newcommand{\br}{to [out=00,in=180]}
\NewDocumentCommand{\tikzBraidingEast}{m m m m m}{
  \bool_case:nTF
  {
    {\str_if_eq_p:NN {#1} {+} } { 
      \draw[->] #3 \br #5;
      \draw[white, line~width=10, line~cap=butt] #2 \br #4;
      \draw[->] #2 \br #4;
    }
    {\str_if_eq_p:NN {#1} {-} } {
      \draw[->] #2 \br #4;
      \draw[white, line~width=10, line~cap=butt] #3 \br #5;
      \draw[->] #3 \br #5;
    }
  }
  { } %
  { } %
}
\NewDocumentCommand{\tikzBraiding}{m m m m m}{
  \bool_case:nTF
  {
    {\str_if_eq_p:NN {#1} {+} } { 
      \draw[-] #3 \br #5;
      \draw[white, line~width=10, line~cap=butt] #2 \br #4;
      \draw[-] #2 \br #4;
    }
    {\str_if_eq_p:NN {#1} {-} } {
      \draw[-] #2 \br #4;
      \draw[white, line~width=10, line~cap=butt] #3 \br #5;
      \draw[-] #3 \br #5;
    }
  }
  { } %
  { } %
}
\NewDocumentCommand{\brokenref}{m}{??}
\declaretheorem[style=bigtheorem,title=Theorem,refname={Theorem, Theorems}]{bigtheorem}
\declaretheorem[style=bigtheorem,title=Conjecture,refname={Conjecture, Conjectures}]{conjecture}
\declaretheorem[style=theorem]{proposition}
\declaretheorem[style=theorem,sibling=proposition]{theorem}
\declaretheorem[style=theorem,sibling=proposition]{lemma}
\declaretheorem[style=theorem,sibling=proposition]{corollary}
\declaretheorem[style=definition,sibling=proposition]{definition}
\declaretheorem[style=definition,sibling=proposition]{remark}
\declaretheorem[style=definition,sibling=proposition]{example}
\crefname{equation}{equation}{equations}
\Crefname{equation}{Equation}{Equations}
\crefname{bigtheorem}{Theorem}{Theorems}
\Crefname{bigtheorem}{Theorem}{Theorems}
\crefname{conjecture}{Conjecture}{Conjectures}
\Crefname{conjecture}{Conjecture}{Conjectures}
\title{A quantization of the \(\operatorname{SL}_2(\mathbb{C})\) Chern-Simons invariant of tangle exteriors}
\author{Calvin McPhail-Snyder}
\address{Department of Mathematics, Duke University}
\email{calvin@sl2.site}
\begin{document}

\begin{abstract}
  We define a sequence of invariants \(\mathcal{Z}_{\nr}^{\psi}\) of tangles with flat \(\mathfrak{sl}_{2}\) connections (i.e.\ hyperbolic structures) on their complements.
  These can be interpreted as a geometric twist of the Kashaev invariant or as a quantization of the \(\operatorname{SL}_{2}(\mathbb{C})\) Chern-Simons invariant.
  To support the second interpretation we give a new description \(\mathcal{I}^{\psi}\) of the Chern-Simons invariant of a tangle exterior.
  \(\mathcal{Z}_{\nr}^{\psi}\) directly recovers \(\mathcal{I}^{\psi}\) when \(\nr = 1\).
  We build \(\mathcal{Z}_{\nr}^{\psi}\) using modules over unrestricted quantum \(\mathfrak{sl}_{2}\) at a root of unity and the holonomy \(R\)-matrices previously constructed by the author and Reshetikhin (\href{https://arxiv.org/abs/2509.02354}{arXiv:2509.02354}).
  Unlike most previous constructions of geometric quantum invariants \(\mathcal{Z}_{\nr}^{\psi}\) is defined without any phase ambiguity.
  It is natural to conjecture that \(\mathcal{Z}_{\nr}^{\psi}\) is related to the quantization of Chern-Simons theory with complex, noncompact gauge group \(\operatorname{SL}_{2}(\mathbb{C})\) and we discuss how to interpret our results in this context.
\end{abstract}

\subjclass{Primary 57K16, secondary 57K32, 58J28}
\keywords{complex Chern-Simons invariant, hyperbolic volume, quantum hyperbolic invariants, holonomy braiding, quantum dilogarithm, volume conjecture}

\maketitle

\tableofcontents
\section{Introduction}

Let \(L\) be a link in \(S^3\).
In this paper we study geometric invariants of \(L\) that depend on a choice of representation  \(\rho : \pi_1(S^3 \setminus L) \to \slg\).
Typically we are interested in \(\rho\) up to conjugation, i.e.\ up to \defemph{gauge transformation}.
Because \(\slg\) is the double cover of the isometry group \(\pslg\) of hyperbolic \(3\)-space we can think of \(\rho\) as a generalized hyperbolic structure and gauge-equivalent \(\rho\) represent the same structure.

Our goal is to define a new family \(\qfunc{}\) of geometric link invariants that we interpret as a quantization of the \(\slg\) Chern-Simons invariant of the exterior of \(L\).
To explain what this means we must first say exactly what we mean by ``Chern-Simons invariant".
For closed manifolds it is a complex number, while for manifolds with boundary it is a section of a line bundle.

\subsection{The classical \texorpdfstring{\(\slg\)}{SL₂(ℂ)} Chern-Simons invariant}
\label{sec:intro Chern-Simons}

First consider a \emph{closed} \(3\)-manifold \(M\) and a representation \(\rho : \pi_1(M) \to \slg\).
Choose a flat \(\mathfrak{sl}_2(\mathbb{C})\) connection \(\theta\) with holonomy \(\rho\).
Denote the integral of the Chern-Simons form of \(\theta\) by
\[
  S(M, \theta)
  \defeq
  -\frac{\ii}{4} \int_{M} \tr [\theta, d\theta] + \frac{2}{3} \tr [\theta, \theta \wedge \theta] 
\]
It is independent of the gauge class of \(\theta\) modulo \(4\ii\pi^2\) and thus defines an invariant \(S(M, \rho) \in \mathbb{C}/4\ii\pi^2 \mathbb{Z}\).
When \(\rho\) is sufficiently nontrivial (say, it acts irreducibly on \(\mathbb{C}^2\)) it induces a metric \(g\) of constant negative curvature on \(M\).
It can be shown \cite[Section 3]{Yoshida1985} that 
\[
  S(M, \rho)
  = 
  \operatorname{Vol}(M, g) + i \operatorname{CS}(M, g)
\]
has real part the volume of \(g\) and imaginary part the Chern-Simons invariant of the \(\operatorname{SO}(3)\) frame-field of \(g\).
As such, it is natural to think of \(S\) as a complexification of the hyperbolic volume.
For this reason it is frequently called the \defemph{complex volume}.
\(S(M,\rho)\) can be computed in terms of an ideal triangulation of \(M\) by a sum of dilogarithms and this definition makes sense for any \(\rho\).

For a knot%
\note{
  This discussion works just as well for links or for general \(3\)-manifolds with torus boundary but the notation becomes more cumbersome.
}
\(K\) in \(S^3\) the \defemph{knot exterior} \(\extr{K} = S^3 \setminus \nu(K)\) is the complement of an open regular neighborhood of \(K\).
It is a \(3\)-manifold with one torus boundary component.
The Chern-Simons integral \(S(\extr{K}, \theta)\) depends not just on the holonomy \(\rho\) of the flat connection \(\theta\) but also on the behavior of \(\theta\) on the boundary \cite{Kirk1993}.
To describe this dependence choose a \defemph{decoration} of \(\rho\), which is an eigenspace for the image of the peripheral subgroup under \(\rho\).
This choice determines eigenvalues \(m, \ell\) of a meridian and longitude of \(K\).
A \defemph{log-decoration} (in the language of \cite{McPhailSnyderAlgebra}) is a choice \(\mathfrak{s}\) of logarithms \(\mu, \lambda\) of \(m, \ell\).
Results of \textcite{Kirk1993} show that fixing a log-decoration gives a well-defined invariant
\[
  S(\extr{K}, \rho, \mathfrak{s}) \in \mathbb{C}/\tu \mathbb{Z}
\]
and that \(S\) changes in a simple way when \(\mathfrak{s}\) does.
In other words, for a manifold with boundary like a knot exterior
\[
  \mathcal{I}\leftfun(K; \rho, \mathfrak{s}\rightfun)
  \defeq
  \exp\leftfun(
    S(\extr{K}, \rho, \mathfrak{s})
  \rightfun)
  \in \mathbb{C}^{\times}
\]
is not simply a complex number but instead should be viewed as a section of a line bundle over the representation variety of \(\extr{K}\).

In this context it is natural to consider a modification of \(S\) by an elementary function
\(
  \psi(\rho, \mathfrak{s}) \defeq \tu \mu \lambda.
\)
We can think of \(\psi\) as a Dehn filling correction term.
If \(\rho\) admits a \((p,q)\) Dehn filling with \(p \mu + q \lambda = \tu\) then up to normalization \(\psi(\rho, \mathfrak{s})\) is the complex length (length plus \(\ii\) times torsion) of the geodesic core of the added solid torus, so the value of \(S + \psi\) is the complex Chern-Simons invariant of the filled manifold.
More generally \(S + \psi\) is a holomorphic function on Dehn surgery space (in fact, on the entire character variety) and is up to normalization the function denoted \(f\) in \cite[Theorem 2]{zbMATH03947227} and \(F\) in \cite[Theorem 2]{Yoshida1985}.
Furthermore \(S + \psi\) depends only on the log-meridian \(\mu\) and not the log-longitude.
For this (and other) reasons it is the natural way for us to normalize the Chern-Simons invariant of a knot exterior, so we work with the \(\psi\)-shifted Chern-Simons invariant
\[
  \csfunc{K, \rho, \mu} \defeq
  \exp\leftfun(
    S(\extr{K}, \rho, \mathfrak{s})
    +
    \psi(\rho, \mathfrak{s})
  \rightfun)
  \in \mathbb{C}^{\times}
  .
\]

Here is a more detailed description of the line bundle mentioned above.
Let \(\sigma : \pi_{1}(T^2) \to \slg\) be a representation of the torus fundamental group.
Fixing a meridian-longitude pair \((\mer, \lon)\) a decoration is a common eigenspace of \(\sigma(\mer)\) and \(\sigma(\lon)\), which defines preferred eigenvalues \(m, \ell\).
The set of quasi-periodic functions
\begin{equation}
  \label{eq:CS quasi periodicity knots}
  f : \set{\mu \in \mathbb{C} \given e^{\tu \mu} = m } \to \mathbb{C} \text{ such that } f(\mu + 1) = \ell^{2} f(\mu)
\end{equation}
is a one-dimensional complex vector space \(\mathcal{E}(\sigma)\) and together these give a line bundle \(\mathcal{E} \to \repvar{T^{2}}\) over the decorated representation variety of the torus.
We can use the framing and orientation of \(K\) to fix an identification \(\varphi\) between \(\partial \extr{K}\) and \(T^{2}\) and we define \(\csfunc{\partial \extr{K}}\) to be \(\varphi^{*} \mathcal{E}\).
It is straightforward to show that the map \(\mu \mapsto \csfunc{K, \rho, \mu}\) satisfies the quasi-periodicity condition \eqref{eq:CS quasi periodicity knots}, which means \(\rho \mapsto \csfunc{K, \rho}\) is a section of the pullback bundle \(r^{*} \csfunc{\partial \extr{K}}\):
\begin{equation}
  \label{eq:CS inv section diagram}
  \begin{tikzcd}
  & \csfunc{\partial \extr{K}} \arrow[d] \\
    \repvar{\extr{K}} \arrow[r, "r"] \arrow[ur, "\csfunc{K}"] & \repvar{\partial \extr{K}}
  \end{tikzcd}
\end{equation}

This behaviour is expected from the point of view of topological quantum field theory.
A theory \(\mathcal{F}\) defined on a cobordism category of manifolds (without a choice of representation) assigns
\begin{itemize}
  \item 
    complex numbers \(\mathcal{F}(M)\) to closed \(3\)-manifolds,
  \item
    vector spaces \(\mathcal{F}(\Sigma)\) to surfaces, and
  \item
    vectors to manifolds with boundary.
    In particular for a knot exterior \(\mathcal{F}(\extr{K}) \in \mathcal{F}(\partial \extr{K}) \iso \mathcal{F}(T^2)\).
\end{itemize}
A geometric theory like \(\csfunc{}\) additionally depends on a choice of (decorated) representation \(\rho\), so it assigns
\begin{itemize}
  \item 
    functions \(\csfunc{M} : \repvar{M} \to \mathbb{C}\) on the decorated representation variety to closed \(3\)-manifolds,
  \item
    line bundles \(\csfunc{\Sigma} \to \repvar{\Sigma}\) over the decorated representation variety to surfaces, and
  \item
    sections of line bundles to manifolds with boundary.
    These are pullbacks of the boundary line bundles along restriction maps as in \eqref{eq:CS inv section diagram}.
\end{itemize}
In this case \(\csfunc{}\) is an invertible, classical field theory, so its values on surfaces are  line bundles.
One could more generally consider quantum invariants taking values in higher-rank vector bundles, and \(\qfunc{}\) is such an invariant.

\subsection{The quantum invariant}

Our main result is the definition of a new family \(\qfunc{}\) of tangle invariants that we interpret as a quantization of \(\csfunc{}\).
The quantization parameter is an integer \(\nr \ge 2\) which corresponds to \(\hbar = \tu /\nr\) and \(q = e^{\tu /\nr}\).
Here we describe properties of the associated knot invariants \(\qinv{}\) to avoid technicalities involving vanishing quantum dimensions. 

\(\qinv{}\) assigns an oriented framed knot \(K\) and a decorated representation \(\rho : \pi_1(\extr{K}) \to \slg\) a function
\[
  \mu \mapsto \qinv{K ; \rho, \mu}
\]
on the set of log-meridians of \(\rho\).
This function satisfies a similar quasi-periodicity condition
\begin{equation}
  \qinv{K ; \rho, \mu + \nr} 
  =
  \ell^{-2}
  \qinv{K ; \rho, \mu} 
\end{equation}
but now the period is \(\nr\) so \(\qinv{K; \rho}\) is determined by its values on \(\nr\) log-meridians \(\mu, \mu + 1, \dots, \mu + \nr - 1\).
In other words, there is an analogous rank \(\nr\) \emph{vector} bundle \(\qfunc{T^2}\) over  \(\repvar{T^2}\) and \(\qinv{K}\) defines a section of the pullback of this bundle to \(\repvar{K} = \repvar{\extr{K}}\):
\[
  \begin{tikzcd}
  & \qfunc{\partial \extr{K}} \arrow[d] \\
    \repvar{\extr{K}} \arrow[r, "r"] \arrow[ur, "\qinv{K}"] & \repvar{\partial \extr{K}}
  \end{tikzcd}
\]
The vector bundle \(\qfunc{T^2}\) is closely related to the line bundle \(\csfunc{T^2}\) and this behaviour makes it natural to interpret \(\qinv{}\) as a quantization of \(\csfunc{}\).

There are other, more concrete reasons for this interpretation.
In order to compute \(\csfunc{K; \rho, \mu}\) in practice one cuts \(\extr{K}\) into pieces (say, via an ideal triangulation) and computes the Chern-Simons invariant of each piece.
The invariants of the pieces depend on additional choices of local data that have to be made coherently, for example a choice of \defemph{flattening} \cite{Neumann2004} of the triangulation.
These flattenings can be viewed in terms of explicit boundary conditions for flat connections on the tetrahedra \cite{Marche2012}.
In \cref{sec:The Chern-Simons invariant of a tangle} we describe a more convenient way to do this for octahedral decompositions.
\(\qinv{K, \rho, \mu}\) is defined using pieces depending on local boundary conditions directly analogous to those for \(\csfunc{}\).
Where \(\csfunc{}\) uses classical dilogarithms \(\qinv{}\) uses quantum dilogarithms.

These are algebraic, formal reasons to view \(\qfunc{}\) as a quantization of \(\csfunc{}\).
A more physical claim is that \(\qfunc{}\) is a mathematical realization of quantum Chern-Simons theory with complex gauge group \(\slg\).
We do not show this here, but we view our results as a precise conjecture about the properties of such a quantization.
In this direction we note that the state space of the torus under geometric quantization of complex Chern-Simons theory \cite{Andersen2016} involves spaces of quasi-periodic functions.
This observation inspired our description of \(\csfunc{}\) and \(\qfunc{}\) in terms of spaces of such functions.
It would be interesting to study this geometric quantization in more detail for knot complements and compare the results to \(\qfunc{}\).

In this paper we define \(\qfunc{}\) only for link and tangle exteriors, not general \(3\)-manifolds.
Using the surgery construction \cite{Reshetikhin1991} it should be possible to extend \(\qfunc{}\) to a \(3\)-manifold invariant and more generally a TQFT for manifolds equipped with \(\slg\) representations.
This requires dealing with several technical issues, many of which have already been resolved by \citeauthor{Blanchet2016} \cite{Costantino2012,Blanchet2016}.
We expect to recover their TQFT from \(\qfunc{}\) in the limit where the representation \(\rho\) is diagonal, as is already known to occur for link invariants (\cref{sec:Relation to the Kashaev--Akutsu-Deguchi-Ohtsuki invariant}).

\subsection{Volume conjectures}
One motivation for constructing geometric quantum invariants like \(\qfunc{}\) is the Volume Conjecture of \textcite{Kashaev1997}, which proposes a relationship between algebraic invariants like the Jones polynomial and geometric invariants like \(\csfunc{}\).
By constructing an invariant like \(\qfunc{}\) with features of both one could hope to explain or prove this conjecture.

\begin{conjecture}
  \label{conj:our volume}
  Let \(K\) be a hyperbolic knot with complete hyperbolic structure \(\rho_{\operatorname{hyp}} : \pi_{1}(\extr{K}) \to \pslg\) lifted to \(\slg\) with meridian eigenvalue \(1\).
  Let \(\rho\) be \emph{any} boundary-parabolic representation of \(\pi_{1}(\extr{K})\).
  Then as \(\nr \to \infty\) the value of \(\qinv{}\) at the log-meridian \((\nr - 1)/2\) grows exponentially
  \begin{equation}
  \label{eq:volume conj ours}
    \frac{1}{\nr}
    \log \qinv{K, \rho , (\nr - 1)/2 }
    \to
    \frac{
      \log \csfunc{K, \rho_{\operatorname{hyp}}, 0 }
      }{
      2 \pi 
    }
  \end{equation}
  with rate controlled by the complex Chern-Simons invariant (in particular, the hyperbolic volume) of the complete structure.
\end{conjecture}

The surprising part of \cref{conj:our volume} is that the growth rate is determined by the volume of \(\rho_{\operatorname{hyp}}\), not of \(\rho\).
By \cref{thm:KADO} the value \(\qinv{K, \pm 1 , (\nr - 1)/2 }\) at the trivial representation \(\pm 1 : \pi_{1}(\extr{K}) \to \pslg\) is Kashaev's invariant, so the complexification \cite{Murakami2002} of Kashaev's Volume Conjecture is the claim that
\begin{equation}
  \label{eq:volume conj Kashaev}
  \frac{1}{\nr}
  \log \qinv{K, \pm 1 , (\nr - 1)/2 }
  \to
  \frac{
    \log \csfunc{K, \rho_{\operatorname{hyp}}, 0 }
    }{
    2 \pi 
  }
\end{equation}
We can think of \cref{conj:our volume} as a weaker statement since it involves a nontrivial representation \(\rho\).
One might be able to prove the weaker conjecture \eqref{eq:volume conj ours} as a step towards the original Volume Conjecture \eqref{eq:volume conj Kashaev}.

We give details and supporting numerical evidence for the figure-eight knot in \cref{sec:Example: the figure-eight knot,rem:vol conj evidence}.
We also have theoretical reasons to expect exponential growth: \(\qinv{}\) can be expressed as a sum of \defemph{state integrals}, which are contour integrals over a space parametrizing hyperbolic structures.
Applying the saddle point method to these integrals leads to exponential growth.
In \cite{McPhailSnyderState} we give a precise definition of the state integrals and discuss barriers to applying the saddle point approximation.

\subsection{Overview of the construction}

We define \(\qfunc{}\) using a geometric variant of the Reshetikhin-Turaev construction \cite{Reshetikhin1990} due to \citeauthor{Kashaev2005}.
The representation theory of \(\qgrplong\) is key to this construction but we focus here on the connections to Chern-Simons theory.
To read this paper it suffices to understand some basic facts about categories of modules over Hopf algebras and the statements of some theorems in \cite{McPhailSnyderAlgebra}.
These are given in \cref{sec:algebras}.

At a root of unity \(\omega\) the quantum group \(\qgrplong\) has a family of modules parametrized by an algebraic group \(\slg^{*}\) closely related to \(\slg\) \cite{de1991representations}.
\citeauthor{Kashaev2004} \cite{Kashaev2004} showed how to define braidings for these modules and interpreted the geometric parametrization in terms of flat \(\sla\) connections on the tangle complement \cite{Kashaev2005}.
This gives invariants of tangles with \(\slg\)-representations.
\textcite{Blanchet2018} used the Kashaev-Reshetikhin construction \cite{Kashaev2005} to define link invariants \(\operatorname{F}_{\nr}\) similar to \(\qinv{}\), but there are a number of unresolved technical issues in their work.
Most importantly:
\begin{enumerate}
  \item there are no explicit formulas for the braidings, which makes it difficult to compute \(\operatorname{F}_{\nr}\) in practice, and
  \item the braidings, hence \(\operatorname{F}_{\nr}\) are only well-defined up to a \(\nr^{2}\)th root of unity.
\end{enumerate}
The second is particularly significant because the surgery TQFT construction \cite{Reshetikhin1991} requires a sum over link invariants and this sum is not well-defined if its terms have indeterminate phases.

Recent work of the author and Reshetikhin \cite{McPhailSnyderAlgebra} solves both of these problems.
The key idea is to use a nonstandard presentation of \(\qgrplong\) inspired by cluster algebras.
This gives a parametrization of \(\qgrplong\)-modules using \defemph{octahedral coordinates} closely related to hyperbolic geometry \cite{McphailSnyder2022hyperbolicstructureslinkcomplements}.
Our presentation gives recurrences for the matrix coefficients of the braiding that one can solve explicitly.
Using the octahedral coordinates it is natural to express the matrix coefficients in terms of quantum dilogarithms.
Via an analogy with the classical Chern-Simons invariant we prove that this braiding satisfies the braid relation exactly, with no phase ambiguity.

The earlier paper \cite{McPhailSnyderAlgebra} focuses on the representation theory underlying this construction and the relevant geometric braiding.
This paper focuses on the still-nontrivial problem of turning this braiding into a tangle invariant.
Our goal is to explain how to understand the algebraic methods of \cite{McPhailSnyderAlgebra} in terms of Chern-Simons theory and hyperbolic geometry.
In the process we use an improved procedure \cite{McphailSnyder2024octahedralcoordinateswirtingerpresentation} for determining the octahedral coordinates, which makes it much easier to handle a number of technical issues.%
\note{
  The main problem is that the octahedral coordinates are only defined away from certain singular configurations which need to be avoided.
  It is much easier to work with Wirtinger presentations that are always well-defined and determine the octahedral parameters from these.
  The approach used here avoids the generic biquandle factorizations of \textcite{Blanchet2018}.
}
Another difficulty in \cite{McPhailSnyderAlgebra} is keeping track of log-colorings and log-longitudes of tangle diagrams.
Here we use a different description of the \(\qgrplong\)-modules assigned to points that allows us to either eliminate the log-parameters or incorporate them into a choice of basis.
To do this we use the natural action of the Weyl algebra on spaces of quasi-periodic functions, a version of the Schr\"{o}dinger representation.

Our methods are closely related to Turaev's homotopy quantum field theories \cite{Turaev2010}, which produce invariants of tangles and manifolds with representations into a group \(G\).
However, our formalism is more general:
\begin{enumerate}
  \item We work with tangle diagrams with colors on both the segments and regions, which means we use \(2\)-categories and not monoidal categories.
  \item Our functors are only defined away from certain singular configurations, so we need to show that these can be avoided.
    As part of this we consider categories of tangle \emph{diagrams}, not tangles.
  \item A HQFT with target \(K(G,1)\) produces invariants of \(G\)-colored tangles valued in scalars, which is to say a section of a product bundle over the \(G\)-representation variety of the tangle.
    Our invariants are instead sections of a (potentially nontrivial) vector bundle.
\end{enumerate}

We call the extra data labeling the regions of our diagrams a \defemph{shadow coloring}.
Shadows (the region colors) are elements of an abelian group \(\mathbb{C}^{2}\) with an action of the group \(\slg\) labeling the segments.
This data is a special case of a \defemph{crossed module} \cite[Section 2.2]{zbMATH07707527}, which is a combinatorial description of a pointed connected homotopy \(2\)-type \(\delooping \eta\), just as a group \(G\) is a description of the homotopy type of its classifying space \(\delooping G\).
Categories graded by a crossed module \(\eta\) were used by \textcite{zbMATH07707527} to define quantum invariants of manifolds equipped with maps to \(\delooping \eta\).
Our invariants should fit into this framework: we can view our results in \cref{sec:Modules} as describing a grading of (part of) the category of \(\qgrp\)-modules by a crossed module.
The fact that \(\pi_{2}(\extr{K})\) vanishes for non-split knots \(K\) ought to be related to the independence of \(\qinv{K, \rho, \mu}\) from the choice of shadow coloring.

The octahedral coordinates are a version of the \defemph{Ptolemy coordinates} \cite{Zickert2016,Garoufalidis2015} associated to the octahedral decomposition of a tangle diagram.
As such they are closely related to cluster coordinates on a space \(\mathscr{P}\) parametrizing suitably decorated \(\slg\) representations of a punctured disc.
In particular shadow colorings appear to be closely related to the \defemph{pinnings} used by \citeauthor{arXiv:1904.10491} \cite{arXiv:1904.10491} to describe the gluing of \(\mathscr{P}\) and related moduli spaces.
In addition, the natural appearance of Schr\"{o}dinger representations in the construction of \(\qfunc{}\) suggests that it is related to the quantization of \(\mathscr{P}\).
This may provide a more systematic method to define \(\qfunc{}\) and prove its invariance.

\subsection{Results}

We sketch the statements of our main results.
Let \(\nr \ge 2\) be an integer.
In \cref{def:tcat} we define a \(2\)-category \(\tcat\) encoding tangles with decorated \(\slg\) representations of their complements, plus some additional gauge data.
More precisely, \(\tcat\) has
\begin{description}
  \item[objects] 
    (regions of diagrams labeled with) elements of \(\shadset = \mathbb{C}^{2} \setminus \set{0}\).
    These are additional gauge data that we use to keep track of local basepoints, as explained in \cref{sec:shadow colorings}.
  \item[\(1\)-morphisms]
    lists of oriented, \defemph{colored} points.
    A coloring is a matrix \(g \in \slg\) (the value of \(\rho\) on the meridian) and vectors \(u, gu \in \shadset\) labelling the regions adjacent to the point; we think of the point as a morphism \(u \to gu\).
    We also specify an eigenspace \([v]\) of \(g\) (a \defemph{decoration}) and a logarithm \(\mu\) of an eigenvalue of \(g\) determined by \([v]\).
  \item[\(2\)-morphisms]
    oriented framed tangle diagrams up to planar isotopy (but not isotopy) colored with similar data to the points, again subject to admissibility conditions.
    We call these \defemph{colored tangle diagrams}.
\end{description}
We compose \(1\)-morphisms by disjoint union, subject to compatibility between the region labels.
The composition of \(2\)-morphisms is the usual composition of colored tangles.
We need to work with tangle \emph{diagrams} (not tangles) because our admissibility conditions depend on a choice of diagram.
We say (\cref{def:equivalent tangle diagrams}) that two diagrams of \(\tcat\) are \defemph{colored isotopic} if the underlying tangles are isotopic and their geometric data agree.

Because \(\qgrplong\) is a Hopf algebra its category of modules is monoidal.
We consider a subcategory \(\qcat\) of \defemph{weight modules} on which the center of \(\qgrplong\) acts semisimply.
As for any monoidal category we can view it as a \(2\)-category with one object.
The \defemph{delooping} of \(\qcat\) is the \(2\)-category \(\delooping\qcat\) with
\begin{description}
  \item[objects] 
    a single object \(\bullet\)
  \item[\(1\)-morphisms]
    finite-dimensional \(\qgrplong\) weight modules, with composition of \(1\)-morphisms the tensor product of modules.
  \item[\(2\)-morphisms]
    \(\qgrplong\)-module intertwiners, composed as linear maps.
\end{description}

In \cref{sec:construction of invariant} we define a \(2\)-functor
\[
  \qfunc{} : \tcat \to \delooping\qcat
\]
and show that \(\qfunc{D}\) depends only on the colored isotopy class of the tangle diagram \(D\), so it gives an invariant of the underlying oriented framed tangle \(T\), the induced log-decorated representation of the complement of \(T\), and some extra gauge data determined by the shadow coloring.
\Cref{thm:qfunc defined and properties} states this precisely and gives the dependence on the framing and log-meridians.

Along the way we discuss how the \(\nr = 1\) case of this construction recovers the Chern-Simons invariant \(\csfunc{}\) of link exteriors.
In the process we show (\cref{sec:The Chern-Simons invariant of a tangle}) how to define \(\csfunc{}\) for tangles, which to my knowledge has not been previously described in detail.
This construction helps explain our interpretation of \(\qfunc{}\) as a quantization of \(\csfunc{}\), but it is also part of our proof that \(\qfunc{}\) satisfies the Reidemeister \(3\) move:
we know for representation-theoretic reasons it satisfies it up to a scalar, but checking that this scalar is \(1\) requires the same fact for \(\csfunc{}\).

Because the modules underlying \(\qfunc{}\) have vanishing quantum dimension the value of \(\qfunc{}\) on any link diagram is zero.
To extract nontrivial link invariants we use a standard trick: if \(L\) is a link with distinguished component \(K\), we choose  a \(1\)-\(1\) tangle diagram \(D\) whose closure is \(L\) (a \defemph{cut presentation}) with a coloring inducing \(\rho\) and \(\mu\).
Since \(\qfunc{D}\) is an endomorphism of an \defemph{absolutely simple} module (\ref{def:absolutely simple object}) it is a scalar times the identity, and we define \(\qinv{L, K; \rho ,\mu}\) to be this scalar.

We show (\cref{thm:qinv defined and properties}) that  that \(\qinv{L, K ; \rho, \mu}\) is independent of the choice of cut presentation and is gauge invariant.
It does depend on the choice of component \(K\), but we should be able to remove this dependence using the modified dimensions of \textcite{Geer2009}; we do not do this here to avoid more complicated theorem statements.
We explain how to reduce the computation of \(\qfunc{}\) to a tensor contraction in \cref{sec:state-sums and examples} and give some examples, including exact values for Hopf links.

In a few special cases \(\qinv{}\) recovers known link invariants.
In the limit where \(\rho\) is reducible (in particular, diagonal) we recover the invariants of \textcite{Kashaev1995} and \textcite{Akutsu1992}.
When \(\nr = 2\) we can use the main result of \cite{McPhailSnyder2020} to show (\cref{thm:torsion relation}) that \(\qfunc[2]{L, K; \rho, \mu}\) is a refinement of the \(\rho\)-twisted Reidemeister torsion.
Specifically, if \(\overline{L}\) is the mirror image of \(L\) then for a certain choice of log-decorated representation \(\overline{\rho}, \overline{\mu}\) we have
\[
  \qfunc[2]{L, K; \rho, \mu}
  \qfunc[2]{\overline{L}, \overline{K}; \overline{\rho}, \overline{\mu}}
  =
  \tau(\comp{L}, \rho)
\]
where \(\tau(\comp{L}, \rho)\) is the twisted Reidemeister torsion.

\subsection{Related constructions}
There are other invariants in quantum topology that are closely related or equivalent to ours.
We discuss a few of them below.

\subsubsection{Quantum hyperbolic invariants}
Let \(M\) be a \(3\)-manifold, \(L\) a link in \(M\), and \(\rho : \pi_{1}(M \setminus L) \to \slg\) a representation.
For each odd integer \(N\) \citeauthor{Baseilhac2004} \cite{Baseilhac2004} define a quantum hyperbolic invariant \(H_N(M, L, \rho) \in \mathbb{C}\) defined up to some phase indeterminacies.
Their construction recovers the Kashaev invariant \cite{Baseilhac2011kashaev} and can be extended to a TQFT \cite{Baseilhac2007}.

The \(H_N\) are constructed by finding an ideal triangulation of \((M, L)\) expressing \(\rho\) in terms of shape parameters for each tetrahedron.
One must also make some additional choices of integer data related to a flattening; the invariant does not depend on these up to the aforementioned phase indeterminacy.
Via the quantum dilogarithm each decorated tetrahedron corresponds to a tensor.
Gluing the tetrahedra to obtain \((M, L, \rho)\) gives a tensor contraction whose value is \(H_N(M, L, \rho)\).
This does not depend on the choice of triangulation because the quantum dilogarithms satisfy a \defemph{five-term} (\defemph{pentagon}) relation and are therefore invariant under the \(3\)-\(2\) move on triangulations.

The Baseilhac-Benedetti construction is closely related to ours: both are extensions of Kashaev's original work \cite{Kashaev1995} on quantum dilogarithms.
In both cases one takes a sum involving one quantum dilogarithm for each tetrahedron in a triangulation.
Because we are defining Reshetikhin-Turaev link invariants we always work with tangle diagrams and their octahedral decompositions.
They instead use Turaev-Viro state sums, which are defined for more general triangulations.
We also use a different version of the quantum dilogarithm (\cref{sec:quantum dilogarithms}).
We expect that their invariants are equivalent%
\note{%
  In general when starting from the same algebraic data the TV invariants are the norm-square of the RT invariants.
  Baseihac-Benedetti use the \(6j\) symbols of the upper Borel of \(\qgrp\), so their invariant should be the same as ours, which uses modules for the full algebra \(\qgrp\).
}
to \(\qfunc{}\) up to a change in normalization involving a \(\nr\)th root of \(\csfunc{}\).

There are a few advantages of our approach over that of \citeauthor{Baseilhac2004}.
The most obvious is that \(\qfunc{}\) is defined unambiguously while the quantum hyperbolic invariants \(H_N\) are defined only up to a phase indeterminacy.
In addition, because we use ideal octahedra and not tetrahedra our invariants are better at handling geometrically degenerate representations.
This leads to a much more direct proof that \(\qfunc{}\) recovers the Kashaev invariant when the representation is trivial.
One disadvantage is that \(\qfunc{}\) is not (yet) defined for general \(3\)-manifolds, only for link exteriors; we plan to address this in future work.

\subsubsection{Skein algebras of surfaces}
\citeauthor{Bonahon2021} \cite{Bonahon2021, Bonahon2022} used skein algebras to define another type of geometric quantum invariant.
Suppose \(M = M_{\phi}\) is a \(3\)-manifold obtained as the mapping torus of a diffeomorphism \(\phi : \Sigma \to \Sigma\) of a surface \(\Sigma\).
We can create a link \(L\) in \(M\) by adding punctures to \(\Sigma\).
At a root of unity a representation of the Kauffman bracket skein algebra of the punctured surface \(\Sigma\) determines a representation \(\pi_{1}(\Sigma) \to \slg\) \cite{Bonahon2019}.
If \(\phi\) fixes the character of this representation it induces a representation \(\rho : \pi_{1}(M_{\phi}) \to \slg\).
Again there is some additional data at the punctures closely related to decorations and log-meridians.
The trace of the action of \(\phi\) on the skein algebra gives an invariant of \((M, \rho)\).

As before the technical details of the construction are quite different than for \(\qfunc{}\) but we expect that the two invariants are equivalent up to a change in normalization.
Both invariants are derived from quantizations of the moduli space of \(\slg\) representations:
\(\qfunc{}\) appears to be more closely related to quantizations of cluster algebras, while the BWY invariants are defined using skein algebras.
Since both quantizations ought to be equivalent our invariants should be as well.
More concretely, the Kauffman bracket skein algebra is related to the \emph{cobraiding} on the quantum matrix group \(\operatorname{SL}_q(2)\), while our construction is related to the braiding on the quantum enveloping algebra \(\qgrplong[q]\).
These are dual to each other so we expect an equivalence.

\subsubsection{Kashaev-Reshetikhin invariants}
Finally, we mention some related work of Chen, the author, Morrison, and Snyder \cite{Chen2021}.
Extending the original work of \textcite{Kashaev2005} we show how to define quantum knot invariants \(\operatorname{KR}_\ell\) depending on a representation \(\rho^{(\ell)}\) of the \defemph{extended knot group} of \(K\) into \(\slg\); the data extending \(\rho\) to \(\rho^{(\ell)}\) is closely related to a log-decoration of \(\rho\).
A major difference is that a point with \(\weyl\)-character \(\chi\) (\cref{sec:algebras}) is assigned the adjoint module \(\qgrp/\ker \chi\), not the simple module \(\qfunc{\chi}\) defined in \cref{sec:Modules}.
This allows a direct proof that the braid group representation is normalized correctly and avoids the use of quantum dilogarithms.
The drawback is that the geometric meaning of \(\operatorname{KR}_{\ell}\) and its relation to Kashaev's invariants are less clear.

Another result in \cite{Chen2021} is that \(\operatorname{KR}_\ell\) can be viewed as a rational function on an extended version \(\mathfrak{X}_{K}^{(\ell)}\) of the character variety \(\mathfrak{X}_K\) of the knot.
Further, for hyperbolic knots one can express \(\operatorname{KR}_\ell\) as a rational function on the geometric component of a similarly extended \(A\)-polynomial curve.
Similar results hold for \(\qfunc{}\), although the presence of dilogarithms results in holomorphic functions instead of rational functions.

\subsection*{Acknowledgements}
I would like to thank Francis Bonahon, Dan Freed, Nathan Geer, David Green, Effie Kalfagianni, Andy Neitzke, and Bertrand Patureau-Mirand for helpful discussions.

\subsection*{Plan of the paper}
\begin{description}
  \item[\cref{sec:tangles}]
    We set conventions on tangles, representations, decorations, and coordinates on them related to octahedral decompositions.
    In particular, we define the category \(\tcat\) of colored tangles on which our functors \(\csfunc{}, \qfunc{}\) are defined.
    We discuss Reidemeister moves for colored tangle diagrams and their relationship to colored isotopy.
  \item[\cref{sec:invariants from models}]
    We define a \defemph{model} of \(\tcat\), which is the data needed to define a functorial invariant of \(\tcat\) in a pivotal category.
    We prove that models give invariants and discuss some simple examples related to quandle cocycles.
  \item[\cref{sec:The Chern-Simons invariant of a tangle}]
    We explain how to define the classical Chern-Simons invariant of a tangle as a model of \(\tcat\) and show it recovers the usual invariant of link exteriors and \(3\)-manifolds.
  \item[\cref{sec:construction of invariant}]
    We define the quantum invariant \(\qfunc{}\) as a model of \(\tcat\) and discuss its properties.
    Some proofs are delayed to \cref{sec:The pinched limit,sec:proofs}.
  \item[\cref{sec:state-sums and examples}]
    We explain how to compute \(\qfunc{}\) in practice.
    We compute it numerically for boundary-parabolic representations of the figure-eight knot and find exact values for Hopf links.
  \item[\cref{sec:torsion}]
    We apply results of \cite{McPhailSnyder2020} to show that that \(\qfunc[2]{}\) determines the \(\slg\)-twisted Reidemeister torsion.
  \item[\cref{sec:The pinched limit}]
    We show that the braiding of \(\qfunc{}\) makes sense even at geometrically degenerate (\defemph{pinched}) crossings.
    We use these results to recover the Kashaev and ADO invariants.
  \item[\cref{sec:proofs}]
    We prove that \(\qfunc{}\) is a model of \(\tcat\), completing the proof of our main results.
  \item[\cref{sec:quantum dilogarithms}]
    We set conventions on the quantum dilogarithm \(\pf{}\) and classical dilogarithm \( \ldil{} \) and collect some properties used elsewhere in the paper.
\end{description}

\subsection*{Conventions}
Throughout the paper \(\nr\) is an integer \(\ge 2\) and \(\ii\) (not \(i\)) is the imaginary unit.
We write \(\omega^{x} = \exp(\tu x /\nr)\), so \(\omega = e^{\tu /\nr}\) is a primitive \(\nr\)th root of unity.
All tangles are oriented and framed and tangle diagrams have the blackboard framing.

\section{Tangles, octahedral decompositions, and coordinates for representations}
\label{sec:tangles}

Here we establish conventions on tangle diagrams and how to represent decorated representations via colorings of tangle diagram.
We explain how to understand the geometric parameters of the associated octahedral decompositions in terms of these colorings, which leads to an admissibility condition.
We define colored analogues of the usual Reidemeister moves that preserve equivalence of diagrams.
Unlike in the usual case equivalence of \emph{admissible} diagrams (i.e.\ those for which \(\qfunc{}\) is defined) is not generated by Reidemeister moves: it possible to have two equivalent diagrams for which every path of moves includes an inadmissible diagram.%
\note{
  I do not know of a example of this phenomenon, but I also do not know how to prove it cannot occur.
}
Instead equivalence is generated by Reidemeister moves and gauge transformations.
The latter can be realized by dragging a strand over a diagram, so the solution to our problem is to allow \defemph{stabilization} moves that add an additional unlinked component to the tangle diagram, as well as their inverses.
In \cref{thm:diagrams linked by stabilization and R} we use stabilization moves to reduce the proof of invariance of \(\qfunc{}\) (more generally, any invariant coming from a model of \(\tcat\) in a pivotal category) to checking admissible Reidemeister moves.

\subsection{Conventions on tangles and diagrams}

\begin{marginfigure}
  \begin{tikzpicture}[line width=1, baseline=30, scale=2, xscale = 1.5] 
    \coordinate (1) at (0,1);
    \coordinate (2) at (0,0);
    \coordinate (3) at (1,0);
    \coordinate (4) at (1,1);
    \draw[->] (2) node[left] {\(2\)} \br (4) node[right] {\(4\)};
    \draw[->] (1) node[left] {\(1\)} \br (3) node[right] {\(3\)};
    \node at (0.5, 1) {\(\lN\)};
    \node at (0, 0.5) {\(\lW\)};
    \node at (0.5, 0) {\(\lS\)};
    \node at (1, 0.5) {\(\lE\)};
  \end{tikzpicture}
  \caption{
    Standard region and segment labels near a crossing (of either sign). In this tangle region \(\lW\) is below segment \(1\) and region \(\lN\).
  }
  \label{fig:crossing-regions}
\end{marginfigure}

\begin{marginfigure}
  \begin{tikzpicture}[line width = 1, scale = 2]
    \draw[->] (0,0) to (1, 0) node[right] {\(i\)};
    \node[above] at (0.5,0) {\(j\)};
    \node[below] at (0.5,0) {\(j'\)};
  \end{tikzpicture}
  \caption{
    Here region \(j'\) is below region \(j\) across segment \(i\).
  }
  \label{fig:region-orientation-rule}
\end{marginfigure}

A \defemph{tangle} is a (PL) embedding of arcs and circles into \([0,1] \times \mathbb{R}^{2}\) so that the endpoints of the arcs lie on \(\set{0} \times \mathbb{R}^{2} \) and \(\set{1} \times \mathbb{R}^{2} \), up to boundary-fixing isotopy.
Our convention is that tangles and their diagrams read left-to-right and that disjoint union of tangles is top-to-bottom.
Unless otherwise stated all tangles are oriented and framed and tangle diagrams are given the blackboard framing.
We write \(\pi(T)\) for the fundamental group of the complement of \(T\) as a submanifold of \([0,1] \times \mathbb{R}^{2}\).
Sometimes we refer to a tangle or link \defemph{exterior} \(\extr{T}\) which is the complement of a regular neighborhood of \(T\); in particular a link exterior \(\extr{L}\) has one torus boundary component for each component of  \(L\).

\begin{definition}
  \label{def:tangle parts}
  Suppose \(T\) has \(n\) incoming and \(m\) outgoing boundary points and let \(D\) be a diagram of \(T\).
  We can think of \(D\) as a decorated \(4\)-valent graph \(G\) embedded in \([0,1] \times \mathbb{R}\) with \(n\) edges intersecting \(\set{0} \times \mathbb{R}\) and \(m\) edges intersecting \(\set{1} \times \mathbb{R}\).
  We assign names to various parts of \(D\):
  \begin{itemize}
    \item
     The \defemph{segments} of \(D\) are the edges%
      \note{%
        Usually these are called the ``edges'' of the diagram, but we do not want to confuse them with edges of ideal polyhedra in the octahedral decomposition so we follow the terminology of \textcite{Kim2018}.
      }
      of \(G\).
    \item
    An \defemph{arc} of \(D\) is a set of adjacent over-segments.
    \item
      A \defemph{component} of \(D\) is a set of adjacent segments; these are in bijection with connected components of \(T\).
    \item
    A \defemph{region} of \(D\) is a connected component of the complement of \(G\), equivalently a vertex of the dual graph of \(G\).
    When regions are adjacent across a segment as in \cref{fig:region-orientation-rule} we say that region \(j'\) is below segment \(i\) across region \(j\).
    For example in \cref{fig:crossing-regions} region \(\lW\) is below region \(\lN\) across segment \(1\).
  \end{itemize}
\end{definition}

\begin{figure}
  \centering
  \begin{tikzpicture}[line width=1, baseline=30, scale=1, xscale = 1.5] 
    \coordinate (1) at (0,1);
    \coordinate (2) at (0,0);
    \coordinate (3) at (1,0);
    \coordinate (4) at (1,1);
    \draw[->] (2) node[left] {\(2\)} \br (4) node[right] {\(4\)};
    \draw[white, line width=10] (1) \br (3);
    \draw[->] (1) node[left] {\(1\)} \br (3) node[right] {\(3\)};
  \end{tikzpicture}
  \quad \quad
  \begin{tikzpicture}[line width=1, baseline=30, scale=1, xscale = 1.5] 
    \coordinate (1) at (0,1);
    \coordinate (2) at (0,0);
    \coordinate (3) at (1,0);
    \coordinate (4) at (1,1);
    \draw[->] (1) node[left] {\(1\)} \br (3) node[right] {\(3\)};
    \draw[white, line width=10] (2) \br (4);
    \draw[->] (2) node[left] {\(2\)} \br (4) node[right] {\(4\)};
  \end{tikzpicture}
  \caption{Positive (left) and negative (right) crossings with our standard segment labeling.}
  \label{fig:crossing-types}
\end{figure}

Let \(D\) be a diagram of a tangle \(T\).
The choice of \(D\) gives a presentation for \(\pi(T)\) with one generator \(x_i\) for each arc and one relation
\begin{gather}
  \label{eq:wirtinger-positive}
  x_{2'} = x_{1}^{-1} x_{2} x_{1} \text{ (positive)}
  \\
  \label{eq:wirtinger-negative}
  x_{1'} = x_{2} x_{1} x_{2}^{-1} \text{ (negative)}
\end{gather}
at each crossing, where the indices are as in \cref{fig:crossing-types}.
(At a positive crossing \(1\) and \(1'\)  are the same arc.)
We call this the \defemph{Wirtinger presentation} \(\pi(D)\) of \(\pi(T)\).
If \(i\) is a segment of \(D\) we write \(x_{i}\) for the associated Wirtinger generator.

\subsection{Decorated representations}

\begin{definition}
  \label{def:decorated representation}
  A \defemph{decorated representation} \(\rho\) of a tangle \(T\) is a representation \(\rho : \pi(T) \to \slg\) with a choice of invariant line for each meridian of \(T\).
  More formally, each component \(i\) of the oriented tangle has a conjugacy class of meridians \([x_i] \subset \pi(T)\).
  For a representative \(x_i \in \pi(T)\) we choose a line \(L_i \subset \mathbb{C}^2\) (thought of as a set of \emph{row} vectors) with
  \[
    L_i \rho(x_i) = L_i.
  \]
  We frequently write \(L_i = [v_i] = \operatorname{Span}(v_i)\) to indicate \(v_i\) is an eigenvector spanning \(L_i\) and call the pair \((g_i, [v_{i}])\) a \defemph{decorated matrix}.
\end{definition}

This definition does not depend on the choice of representative meridian: if \(x_i' = y^{-1} x_i y\) is any other representative of the conjugacy class we assign it the line \(L_i \rho(y)\), since
\[
  L_i \rho(y) \rho(y^{-1} x_i y) = L_i \rho(y).
\]
This choice is called a \defemph{decoration} of component \(i\), and a decoration of \(\rho\) is a decoration of each of the components of \(T\).
A decoration of \(\rho\) induces an equivalent decoration of any conjugate \(g^{-1} \rho g\) in a similar way.

Generically a representation \(\rho\) of knot exterior has two decorations, as a diagonalizable matrix has two eigenspaces.
When \(\tr \rho(x) = \pm 2\) but \(\rho \ne \pm 1\) is nontrivial (i.e. when \(\rho\) is boundary-parabolic) there is only one decoration, and when \(\rho(x) = \pm 1\) is trivial there are infinitely many.
As such \(\repvar{K}\) is a branched \(2\)-sheeted covering space of the usual  \(\slg\) representation variety of \(K\), ramified over parabolic and trivial representations.
Similar statements apply to links and tangles.

\begin{definition}
  \label{def:decorated matrix}
  A \defemph{decorated matrix} is a pair \((g, [v])\) of a matrix \(g \in \slg\) and a left eigenspace \([v] = \operatorname{Span}\{v\}\) of \(g\).
  We call the \(m \in \mathbb{C}^{\times}\) with%
  \note{%
    The minus sign here is unfortunately correct: it matches the conventions of \cite{McPhailSnyderAlgebra}, which were chosen before the connection with decorations was fully understood.
  }
  \begin{equation}
    \label{eq:distinguished eigenvalue}
     v \cdot g = m^{-1} v
  \end{equation}
  the \defemph{distinguished eigenvalue} of \((g, [v])\).
  We frequently write \(g\) for \((g, [v])\).
  Decorated matrices are not a group, but we can generalize conjugation to them by
  \begin{equation}
    \label{eq:quandle decorated matrices}
    (g_{1}, [v_{1}])
    \qn 
    (g_{2}, [v_{2}])
    \defeq
    (g_{2}^{-1} g_{1} g_{2}, [v_{1} g_{2}] )
  \end{equation}
  The binary operation \(\qn\) defines a \defemph{quandle} \cite{zbMATH00165698} because it is invertible and satisfies 
  \begin{gather}
    \label{eq:quandle distributivity}
    (x \qn y) \qn z = (x \qn z) \qn (y \qn z)
    \\
    \label{eq:quandle idempotency}
    x \qn x = x
  \end{gather}
\end{definition}

\begin{definition}
  Let \(D\) be a diagram of a tangle \(T\).
  A \defemph{decorated \(\slg\)-coloring} of \(D\) assigns each arc of \(D\) a decorated matrix subject to the rules
  \begin{equation}
    \label{eq:quandle coloring rules}
    \begin{tikzpicture}[line width=1, baseline=30, scale=1] 
      \draw[->] (0,0) node[left] {\(g_{2}\)} \br (1.5,1) ;
      \draw (1.5,1) node[right] { \(  g_{2} \qn g_{1} \) };
      \draw[white, line width=10] (0,1) node[left] {} \br (1.5,0) node[right] {};
      \draw[->] (0,1) node[left] {\(g_{1}\)} \br (1.5,0) ;
      \draw (1.5,0) node[right] {\(g_{1}\)};
    \end{tikzpicture}
    \quad \quad
    \begin{tikzpicture}[line width=1, baseline=30, scale=1] 
      \draw[->] (0,1) node[left] {\(g_{1} \)} \br (1.5,0) ;
      \draw (1.5,1) node[right] { \(  g_{2} \) };
      \draw[white, line width=10] (0,0) node[left] {} \br (1.5,1) node[right] {};
      \draw[->] (0,0) node[left] {\(g_{2}\)} \br (1.5,1) ;
      \draw (1.5,0) node[right] {\(g_{1} \qn^{-1} g_{2}\)};
    \end{tikzpicture}
  \end{equation}
  We write \(\repvar{D}\) for the set of decorated \(\slg\)-colorings of \(D\) and \(\repvar{T} = \repvar{D}\) for some possibly unspecified diagram of \(T\).
  This is well defined by the following lemma:
\end{definition}

\begin{lemma}
  For any two diagrams \(D, D'\) of the same tangle there is a bijection \(\repvar{D} \to \repvar{D'}\).
\end{lemma}

\begin{proof}
  Because \(\qn\) is invertible colorings are preserved by \(\reid{2}\) moves, \eqref{eq:quandle distributivity} corresponds to \(\reid{3}\) moves, and \eqref{eq:quandle idempotency} corresponds to \(\reid{1}\) moves.
  Some relevant diagrams occur in the proof of \cref{thm:cocycles give models}.
\end{proof}

\begin{definition}
  \label{def:decorated matrix log}
  A \defemph{log-decorated matrix} is a decorated matrix \(g = (g, [v])\) along with a choice of logarithm \(\mu\) of the distinguished eigenvalue of \(g\).
  Our convention is that  \(e^{\tu \mu}= m\).
  The operation \(\qn\) extends to log-decorated matrices in the obvious way (by preserving \(\mu\)).
  A \defemph{log-decorated \(\slg\)-coloring}%
  \note{
    In \cite{McPhailSnyderAlgebra} a log-decoration also includes a choice of logarithm \(\lambda\) of the distinguished longitudes defined in \cref{sec:Longitude eigenvalues}.
    Our invariants are normalized to remove the dependence on \(\lambda\), so we drop it from the definition.
  }
  of a tangle diagram  \(D\) is a coloring of \(D\) by log-decorated matrices satisfying \eqref{eq:quandle coloring rules}.
\end{definition}

If \(D\) has \(c\) components then the space \(\repvarlog{D}\) of log-decorated \(\slg\)-colorings of \(D\) is a \(\mathbb{Z}^{c}\)-covering of \(\repvar{D}\).
\begin{remark}
  \label{rem:cohomology class}
  A decorated representation \(\rho\) whose images lies in the diagonal subgroup of \(\slg\) is completely determined by its meridian eigenvalues, so we can identify it with the cohomology class 
  \[
    \sigma \in \cohomol{1}{\extr{L}; \mathbb{C}^{\times}}, \sigma([\mer_i]) = m_{i}
  \]
  where \([\mer_i]\) is the homology class of the meridian of component \(i\).
  These are the cohomology classes used by \cite{Costantino2012,Blanchet2016}.
  From this perspective a log-decoration of \(\rho\) is the same thing as a lift of \(\sigma\) to \(\cohomol{1}{\extr{L}; \mathbb{C}}\).
\end{remark}

\subsection{Shadow-colored diagrams and octahedral decompositions}
\label{sec:shadow colorings}

\begin{marginfigure}
  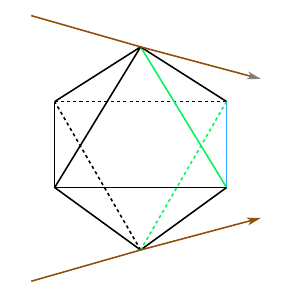
  \caption{A side view of the ideal octahedron at a positive crossing. The dashed edges indicate identifications of \(P_{\pm}\) with \(P_{\pm}'\).}
  \label{fig:octahedron-side}
\end{marginfigure}

One way to study hyperbolic structures on tangles is to use ideal triangulation of the tangle complement: this is a triangulation whose \(0\)-skeleton (which we think of as being removed) lies on the tangle.
Because our method for constructing invariants depends on a choice of diagram \(D\) it is convenient to use the \defemph{octahedral decomposition} associated to \(D\).%
\note{%
  Strictly speaking it is a semi-ideal triangulation because there are two material points; we can also think of it as a triangulation of the tangle exterior minus two points.
  \textcite{Kim2018} explain in detail why the usual geometric tools like developing maps still work in this case.
}
It assigns a (twisted) ideal octahedron to each crossing, as shown in \cref{fig:octahedron-side}.
We divide this octahedron into four tetrahedra, one for each corner of the crossing.

To put a hyperbolic structure on an ideal tetrahedron we can place its vertices on the boundary at infinity of hyperbolic space, i.e. \(\mathbb{C}P^1\).
Such geometric ideal tetrahedra are determined up to congruence by the cross-ratio of their vertices, usually called the \defemph{shape parameter}.%
\note{
  For details see \cite{McphailSnyder2022hyperbolicstructureslinkcomplements}.
  Note that we are using a different convention on negatively oriented tetrahedra.
}
Strictly speaking a shape parameter \(z^{0}\) is assigned to a pair of opposite edges, and the other edge pairs are assigned
\[
  z^{1} = \frac{1}{1-z^{0}} \text{ and } z^{2} = 1 - \frac{1}{z^{0}} = \frac{1}{1-z^{1}}.
\]

\textcite{Kim2018} give a detailed discussion of the geometry and combinatorics of the octahedral decomposition.
In particular, they explain how to solve Thurston's gluing equations in terms of parameters assigned to the segments and regions of the tangle diagram, which we call \defemph{octahedral coordinates}.

To discuss invariance of \(\qfunc{}\) under Reidemeister moves we need to consider the algebraic properties of the octahedral coordinates.
They correspond to a generalization of a quandle (itself a generalization of the conjugation structure of a group) called a \defemph{generically defined biquandle} by \cite{Blanchet2018}; similar structures were used in \cite{Kashaev2005} and studied systematically by \citeauthor{zbMATH06642281} \cite{zbMATH06642281}.
Using such colorings introduces some technical complications, so we prefer a different method.
In \cite{McphailSnyder2024octahedralcoordinateswirtingerpresentation} we show how to determine octahedral colorings using \(\slg\) colored tangle diagrams with an additional choice of gauge data called a shadow coloring.

\begin{definition}
  \label{def:shadow coloring}
  Write \(\shadset = \mathbb{C}^{2} \setminus \set{0}\).
  A \defemph{shadow coloring} of a decorated \(\slg\) colored diagram \(D\) is an assignment \(j \mapsto u_j\) of an element \(u_j \in \shadset\) to each region \(j\) subject to the rule in \cref{fig:shadow-rule}.
  Here we think of the shadow colors \(u_j\) as \emph{column} vectors.
\end{definition}

\begin{marginfigure}
  \begin{tikzpicture}[line width = 1, scale = 2]
    \draw[->] (0,0) to (1, 0) node[right] {\((g, [v])\)};
    \node[above] at (0.5,0.1) {\(u\)};
    \node[below] at (0.5,-0.1) {\(gu\)};
  \end{tikzpicture}
  \caption{Rule for shadow colorings}
  \label{fig:shadow-rule}
\end{marginfigure}

\begin{lemma}[\protect{\cite[Lemma 2.9]{McphailSnyder2024octahedralcoordinateswirtingerpresentation}}]
  \label{thm:shadow colorings one region}
  Every decorated \(\slg\)-colored diagram has a shadow coloring.
  For fixed \((D,\rho)\) the space of shadow colorings is parametrized by the color of a single region.
\end{lemma}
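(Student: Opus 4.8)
\emph{The plan} is to read a shadow coloring as a flat section over the dual graph of the diagram, so that both existence and the parametrization reduce to the vanishing of a monodromy, and that monodromy is controlled by the Wirtinger relations of \cref{thm:wirtinger}. First I would set up the dual graph $\hat G$ of $(D,\rho)$: its vertices are the regions of $D$, and each segment $i$ gives an edge joining the two regions it separates, labeled by $g_i = \rho(x_i)\in\slg$. The rule of \cref{fig:shadow-rule} says precisely that if $j'$ is below $j$ across segment $i$ then $u_{j'} = g_i u_j$; equivalently, crossing that dual edge in one direction multiplies the region color (a column vector) by $g_i$ and in the other by $g_i^{-1}$. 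Fixing a base region $j_0$ and, for each region $j$, a path $\gamma_j$ in $\hat G$ from $j_0$ to $j$, the rule forces $u_j = h_{\gamma_j} u_{j_0}$, where $h_{\gamma_j}\in\slg$ is the ordered product of the $g_i^{\pm1}$ met along $\gamma_j$. Since $\slg$ acts on $\shadset$ by invertible maps, $h_{\gamma_j}u_{j_0}\in\shadset$ whenever $u_{j_0}\in\shadset$, so the whole question is whether $u_j$ is independent of $\gamma_j$, i.e.\ whether $h_\ell = 1$ for every loop $\ell$ in $\hat G$.

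Next I would reduce this to a local check at the crossings. Realizing $\hat G$ inside $S' \defeq [0,1]^2 \setminus \{\text{crossings of } D\}$ (each dual edge crosses its segment once, away from the crossings), every loop of $\hat G$ becomes a loop in $S'$. A homotopy of loops in $S'$ only creates or destroys intersections with the segments of $D$ in cancelling pairs, so the ordered product $h$ is unchanged and descends to a homomorphism $h : \pi_1(S', j_0) \to \slg$. But $S'$ is a disk with finitely many points removed, so $\pi_1(S',j_0)$ is free on the loops $x_X$ encircling the crossings $X$. Hence it is enough to show $h(x_X)=1$ for each $X$, i.e.\ that the rule of \cref{fig:shadow-rule} has trivial monodromy around the four regions $\lN,\lE,\lS,\lW$ meeting at $X$.

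The heart of the argument — and the step I expect to be the main obstacle — is this local computation. Reading \cref{fig:shadow-rule} at the four segments $1,2,1',2'$ incident to $X$ and composing in cyclic order expresses $h(x_X)$ as a conjugate of the word obtained by transposing everything in the Wirtinger relation \eqref{eq:wirtinger-positive} (positive crossing) or \eqref{eq:wirtinger-negative} (negative crossing) to one side; for instance at a positive crossing one gets a conjugate of $x_{2'}^{-1}x_1^{-1}x_2 x_1$, which \cref{thm:wirtinger} forces to be $1$. Carrying this out requires carefully tracking the orientation conventions of \cref{fig:crossing-regions,fig:region-orientation-rule,fig:crossing-types} for both crossing signs, which is the one genuinely fiddly point.

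Finally, with $h_\ell = 1$ for all loops in $\hat G$, the assignment $u_j \defeq h_{\gamma_j} w$ is independent of the chosen paths for any fixed $w\in\shadset$, and one checks directly from the construction that it satisfies the rule at every segment; this is a shadow coloring with $u_{j_0}=w$, proving existence. Conversely any shadow coloring $u$ satisfies $u_j = h_{\gamma_j}u_{j_0}$, so it is recovered from $w = u_{j_0}$. Thus $u \mapsto u_{j_0}$ is a bijection from the set of shadow colorings of $(D,\rho)$ onto $\shadset$, which is exactly the claim that the space of shadow colorings is parametrized by the color of a single region.
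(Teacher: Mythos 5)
Your proof is correct and takes essentially the same approach as the paper, which gives only a one-sentence proof idea (``once you pick the shadow of a single region the rule determines the shadows of every other region'') and defers the details to \cite[Lemma 2.9]{McphailSnyder2024octahedralcoordinateswirtingerpresentation}. Your dual-graph/monodromy formulation is the natural way to make that sentence rigorous, and the local check you flag as the fiddly point does come out right: traversing a positive crossing $\lN\to\lW\to\lS$ gives $u_{\lS}=g_2 g_1 u_{\lN}$, traversing $\lN\to\lE\to\lS$ gives $u_{\lS}=g_{1'}g_{2'}u_{\lN}$, and since $g_1=g_{1'}$ the required identity $g_2 g_1 = g_1 g_{2'}$ is exactly the Wirtinger relation $x_{2'}=x_1^{-1}x_2 x_1$ of \eqref{eq:wirtinger-positive} (and similarly for \eqref{eq:wirtinger-negative}).
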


\begin{proof}[Proof idea]
  Once you pick the shadow of a single region the rule in \cref{fig:shadow-rule} determines the shadows of every other region.
\end{proof}

Using the lemma we identify a shadow coloring of a tangle diagram \((D, \rho)\) with the shadow of its topmost region and thus write \(\repvar{D} \times \shadset\) for the space of shadow-colored, decorated representations of \(\pi(D)\).

\begin{remark}
  \label{rem:shadow colorings}
  Any group \(G\) has a \defemph{conjugation quandle} \(\textsf{C}(G)\) defined by \(g \qn h \defeq h^{-1}g h\).
  If \(\textsf{Q}\) is a quandle, \(\varphi\) is a quandle homomorphism \(\varphi : \textsf{Q} \to \textsf{C}(G)\), and \(\textsf{X}\) is a set with left \(G\)-action then one can consider colorings of tangle diagrams by \((\mathsf{Q},\mathsf{X})\) as above: across a strand with color \(x\) the region color \(u\) becomes \(\phi(x) u\).
  Shadow colorings closely related to ours were used by \citeauthor{Inoue2014} to compute complex volumes of boundary-parabolic representations of links \cite{Inoue2014}.
  In \cref{sec:The Chern-Simons invariant of a tangle} we extend their work to general representations and tangles.
\end{remark}

\begin{definition}
  \label{def:chi parameters}
  Let 
  \(
  \evec{1} =
  \left[
  \begin{smallmatrix}
  1 \\ 0    
  \end{smallmatrix}
  \right]
  ,
  \evec{2}
  \left[
  \begin{smallmatrix}
  0 \\ 1    
  \end{smallmatrix}
  \right]
  \)
  be the standard basis of \(\mathbb{C}^{2}\) viewed as column vectors.
  For a decorated shadow coloring \((\rho, u)\) of a diagram \(D\) we define \defemph{region parameters} for each region \(j\) by
  \begin{equation}
    \label{eq:region parameter}
    a_j = \evec{1}^{\operatorname{T}} u = \det(u, \evec{2})
  \end{equation}
  \defemph{segment parameters} for each segment \(i\) below region \(i^{\up}\) by
  \begin{equation}
    \label{eq:segment parameter}
    b_i =
    -
    \frac{
      v_i \evec{2}
      }{
      v_i u_{i^{\up}}
    }
  \end{equation}
  and define the \defemph{meridian parameter} \(m_k\) of component \(k\) to be the distinguished eigenvalue of \cref{eq:distinguished eigenvalue}:
  \begin{equation}
    \label{eq:meridian parameter}
    v_i g_i = m_{k}^{-1} v_i
  \end{equation}
  where \(i\) is any segment belonging to component \(k\).
  A shadow coloring is \defemph{admissible} if the region and segment parameters are all elements of \(\mathbb{C} \setminus \set{0}\) (i.e.\@ are not \(0\) or \(\infty\)) and we write
  \[
    \admvar{D} \subset \repvar{D} \times \shadset
  \]
  for the space of admissible shadow colorings.
  Because admissibility is independent of the log-meridians it extends in the obvious way to shadow colorings of log-decorated tangles.
\end{definition}

\begin{marginfigure}
  \begin{tikzpicture}[line width = 1, scale = 2]
    \draw[->] (0,0) node[left] {\((g, [v])\)} to (1, 0);
    \node[above right] at (0,0.1) {\(u\)};
    \node[below right] at (0,-0.1) {\(gu\)};
    \node at (0.5, 0.5) { \( \det(u, \evec{2}) \) };
    \node at (0.5, -0.5) { \( \det(gu, \evec{2}) \) };
    \node[right] at (1, 0) { \( \dfrac{ v \evec{2} }{ v u} \) };
  \end{tikzpicture}
  \caption{Segment and region parameters}
  \label{fig:chi parameters example}
\end{marginfigure}

It is possible for some \(\rho \in \repvar{D}\) to not correspond to any admissible shadow coloring; for example, this will be the case if some distinguished eigenvector \(v_i\) lies in \(\operatorname{Span}\{ (1,0)\}\).
Despite this the conjugation orbit \(\set{ g^{-1} \rho g \given g \in \slg}\) of \(\rho\) will always contain a decorated representation with an admissible shadow coloring.
We discuss this further in \cref{sec:Equivalence of tangles and diagrams,def:gauge transformations}.
Admissibility is a relatively weak condition: it still allows geometrically degenerate tetrahedra, but in a controlled way.

\begin{definition}
  \label{def:pinched crossing}
  Let \(\rho \in \repvar{D}\) be a decorated representation of a tangle diagram \(D\).
  A crossing of the colored diagram \((D, \rho)\) is \defemph{pinched} if \([v_1] = [v_2]\), that is if the incoming segments have the same distinguished eigenspace.
\end{definition}

Let \(X\) be an admissibly shadow-colored crossing of sign \(\epsilon\).
Define
\begin{align}
  \label{eq:shape-N}
  z_{\lN}^0
  &\defeq
  \left( \frac{b_{4}}{b_{1}} \right)^{\epsilon}
  &
  z_{\lN}^1
  &\defeq
  \frac{K_{X}}{a_{\lN}}
  \\
  \label{eq:shape-W}
  z_{\lW}^0
  &\defeq
  \left( \frac{b_{2}}{b_{1} m_{1}} \right)^{\epsilon}
  &
  z_{\lW}^1
  &\defeq
  \frac{K_{X} m_{1}^{\epsilon}}{a_{\lW}} 
  \\
  \label{eq:shape-S}
  z_{\lS}^0
  &\defeq
  \left( \frac{b_{2} m_{2} }{b_{3} m_{1}} \right)^{\epsilon}
  &
  z_{\lS}^1
  &\defeq
  \frac{K_{X} m_{1}^{\epsilon} }{a_{\lS} m_{2}^{\epsilon} }
  \\
  \label{eq:shape-E}
  z_{\lE}^0
  &\defeq
  \left( \frac{b_{4} m_{2} }{b_{3} } \right)^{\epsilon}
  &
  z_{\lE}^1
  &\defeq
  \frac{K_{X}}{a_{\lE} m_{2}^{\epsilon} }
\end{align}
where \(\epsilon \in \set{1, -1}\) is the sign of the crossing and
\(
  K_{X}=
  a_{\lN}/(1 - \left(b_{4}/b_{1}\right)^{\epsilon}).
\)
Using \eqref{eq:segment parameter} it is easy to see that \(X\) is pinched if in some (equivalently, all) shadow coloring of \((D, \rho)\) some (equivalently, all) of the shape parameters \(z_{i}^{0}\) are equal to \(1\).
In particular if \(X\) is not pinched \(K_{X}\) and the \(z_{j}^{1}\) are well-defined.

\begin{theorem}[\cite{McphailSnyder2022hyperbolicstructureslinkcomplements,McphailSnyder2024octahedralcoordinateswirtingerpresentation}]
  \label{thm:shape parameters}
  Let \((\rho, u)\) be an admissible shadow coloring of \(D\).
  Suppose that no crossing of \(D\) is pinched.
  Then the shape parameters of (\ref{eq:shape-N}--\ref{eq:shape-E}) give a solution of Thurston's gluing equations for the octahedral decomposition associated to \(D\).
  Furthermore the holonomy representation induced by this solution agrees with \(\rho\).
\end{theorem}

\begin{proof}
  In \cite{McphailSnyder2022hyperbolicstructureslinkcomplements} we define an \defemph{octahedral coloring} of a link (or tangle) diagram to be an assignment of parameters \(b_{i}, a_{j}, m_{k}\) satisfying certain equations and show that any octahedral coloring gives a solution of the gluing equations (at least whenever the coloring is not pinched so the equations make sense).
  We then give a formula for the formula for the holonomy \(\rho\) of the hyperbolic structure determined by the gluing.
  In \cite{McphailSnyder2024octahedralcoordinateswirtingerpresentation} we work in the opposite direction: given \(\rho\) and a choice of admissible shadow coloring we prove that the formulas of \cref{def:chi parameters} determine an octahedral coloring \cite[Theorem 2.11]{McphailSnyder2024octahedralcoordinateswirtingerpresentation} and that the induced holonomy representation recovers \(\rho\) \cite[Theorem 1]{McphailSnyder2024octahedralcoordinateswirtingerpresentation}.
\end{proof}

\begin{corollary}
  \label{thm:segment relations}
  At any (non-pinched) crossing of sign \(\epsilon\) the region and segment parameters are related by 
  \begin{align*}
    \frac{
      a_{\lW}
      }{
      a_{\lN}
    }
    m_{1}^{-\epsilon}
    &=
    \frac{
      1 - z_{\lW}^{0}
      }{
      1 - z_{\lN}^{0}
    }
    &
    \frac{
      a_{\lS}
      }{
      a_{\lE}
    }
    m_{1}^{-\epsilon}
    &=
    \frac{
      1 - z_{\lS}^{0}
      }{
      1 - z_{\lE}^{0}
    }
    \\
    \frac{
      a_{\lE}
      }{
      a_{\lN}
    }
    m_{2}^{\epsilon}
    &=
    \frac{
      1 - z_{\lE}^{0}
      }{
      1 - z_{\lN}^{0}
    }
    &
    \frac{
      a_{\lS}
      }{
      a_{\lW}
    }
    m_{2}^{\epsilon}
    &=
    \frac{
      1 - z_{\lS}^{0}
      }{
      1 - z_{\lW}^{0}
    }
  \end{align*}
  so for example
  \[
    \frac{
      a_{\lW}
      }{
      a_{\lN}
    }
    m_{1}^{-\epsilon}
    =
    \frac{
      1 - (b_{2} / b_{1} m_{1})^{\epsilon}
      }{
      1 - (b_{4} / b_{2})^{\epsilon}
    }
    .
    \qedhere
  \]
\end{corollary}

\begin{proof}
  Part of the conclusion of \cref{thm:shape parameters} is that 
  \[
    z_{j}^{1} = \frac{1}{1 - z_{j}^{0}} 
  \]
  for each corner \(j\).
  Taking ratios of these relations gives the claim.

  Alternatively, observe that for \(\epsilon = 1\) the claim is a relation between explicit rational functions defined by \cref{eq:region parameter,eq:segment parameter} on the set \(\admvar{X}\) of admissible shadow colorings of a positive crossing \(X\).
  It can therefore be checked by writing things out in coordinates and doing some algebra.
  The same argument works for \(\epsilon = -1\).
\end{proof}

The papers \cite{McphailSnyder2022hyperbolicstructureslinkcomplements, McphailSnyder2024octahedralcoordinateswirtingerpresentation} establish a connection between the braiding on quantum \(\sla\) at a root of unity and the octahedral coordinates.
This connection was developed to understand the holonomy \(R\)-matrices of \cite{McPhailSnyderAlgebra}.
In this context we think of the segment and region parameters as defining central characters of the Weyl algebra \(\weyl\) (\cref{sec:Modules}).
The outer \(R\)-matrix \(\mathcal{R}\) (\cref{sec:The braiding}) acts on these characters, and the lemma below says that the action agrees with the parameters:

\begin{lemma}[\protect{\cite[Theorem 2.11]{McphailSnyder2024octahedralcoordinateswirtingerpresentation}}]
  \label{thm:crossing parameter relations}
  Let \(X\) be a shadow-colored, decorated \(\slg\)-colored crossing.
  If it is positive the segment and region parameters of \(X\) are related by
  \begin{align}
      \label{eq:b-transf-positive-1}
      b_{3}
      &=
      \frac{m_2 b_2}{m_1}
      \left(
        1 - m_2 a_2 \left( 1 - \frac{b_2}{m_1 b_1} \right)
      \right)^{-1}
      \\
      \label{eq:b-transf-positive-2}
      b_{4}
      &=
      b_1
      \left(
        1 - \frac{m_1}{a_1}\left( 1 - \frac{b_2}{m_1 b_1} \right)
      \right)
      \\
    \label{eq:a-transf-positive}
      a_{\lE}
      &=
      \frac{1}{a_{\lW}} \left(
      a_{\lN} a_{\lS} - \frac{m_1 b_1}{b_2} \left(a_{\lN} - \frac{a_{\lW}}{  m_1}\right)\left(a_{\lS} - \frac{a_{\lW}}{m_2 }\right)
    \right)
  \end{align}
  while if it is negative they are related by
  \begin{align}
  \label{eq:b-transf-negative-1}
    b_{3}
      &=
      \frac{m_2 b_2}{m_1}
      \left(
        1 - \frac{a_2}{m_2} \left( 1 - \frac{m_1 b_1}{b_2} \right)
      \right)
      \\
  \label{eq:b-transf-negative-2}
      b_{4}
      &=
      b_1
      \left(
        1 - \frac{1}{m_1 a_1} \left( 1 - \frac{m_1 b_1}{b_2} \right)
      \right)^{-1}
      \\
    \label{eq:a-transf-negative}
    a_{\lE} &=
    \frac{
      1  
      }{
      a_{\lW}
    }
    \left(
    a_{\lN} a_{\lS} - \frac{b_2}{m_1 b_1}\left(a_{\lN} - m_1 a_{\lW} \right) \left( a_{\lS} - m_{2} a_{\lW} \right)
  \right)
  .
  \end{align}
\end{lemma}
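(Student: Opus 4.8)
The plan is to verify all six identities by a direct local computation, since the statement only involves data at a single crossing $X$. I would treat the positive case in detail; the negative case is entirely analogous, obtained by interchanging the roles of the over- and under-arc and using \cref{eq:wirtinger-negative} in place of \cref{eq:wirtinger-positive}. The whole argument amounts to unwinding \cref{def:chi parameters} with the help of the Wirtinger relation at $X$ (\cref{thm:wirtinger}) and the shadow rule of \cref{fig:shadow-rule}.

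First I would record the arc data produced by the crossing. Writing $g_i = \rho(x_i)$, \cref{thm:wirtinger} gives $g_{2'} = g_1^{-1} g_2 g_1$ and $g_{1'} = g_1$, so the crossing rule for decorated $\slg$-colorings forces $v_{1'} = v_1$, $m_{1'} = m_1$ and $v_{2'}$ proportional to $v_2 g_1$, $m_{2'} = m_2$ (that $v_2 g_1$ is again a distinguished eigenvector is immediate from $v_2 g_2 = m_2^{-1} v_2$, and every formula in the statement is independent of the scaling of each $v_i$, so the proportionality constant is irrelevant). Next I would use the shadow rule to express the four region colors around $X$ in terms of a single one, say $u_{\lW}$: crossing segment $1$ relates $u_{\lN}$ to $u_{\lW}$, crossing segment $2$ relates $u_{\lS}$ to $u_{\lW}$, and $u_{\lE}$ is reached both from $u_{\lN}$ and from $u_{\lS}$ by crossing the two outgoing segments. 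The two expressions obtained for $u_{\lE}$ agree precisely because of the Wirtinger relation, so the data is consistent; concretely each of $u_{\lN}, u_{\lS}, u_{\lE}$ is an explicit $\slg$-translate of $u_{\lW}$ by a word in $g_1$ and $g_2$ read off from \cref{fig:crossing-regions,fig:region-orientation-rule}.

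With these relations in hand the rest is bookkeeping. The point is that $g_1$ is pinned down, up to the scalar fixed by $\det g_1 = 1$, by the two facts that it has distinguished eigenvector $v_1$ with eigenvalue $m_1^{-1}$ (\cref{eq:meridian parameter}) and that it carries $u_{\lW}$ to its neighbour across segment $1$; in a basis adapted to $v_1$ and $\evec{1}$ its entries become explicit rational functions of $m_1$, of $a_{\lW} = \evec{1} u_{\lW}$, of $a_{\lN} = \evec{1} u_{\lN}$, and of the remaining components of $u_{\lW}, u_{\lN}$, which enter only through $b_1$. Substituting this description of $g_1$, together with $v_{2'} = v_2 g_1$ and the analogous description of $g_2$, into $b_{1'} = -\,v_{1'}\evec{2}/(v_{1'}u_{1'})$ and $b_{2'} = -\,v_{2'}\evec{2}/(v_{2'}u_{2'})$ and then re-expressing everything in terms of $b_1, b_2, m_1, m_2$ and the relevant region parameters, I expect the formulas to collapse to \cref{eq:b-transf-positive-1,eq:b-transf-positive-2}; the combination $1 - b_2/(m_1 b_1)$, which I expect to factor out, should appear as the cross-ratio built from the distinguished eigenvectors and the shadows. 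For \cref{eq:a-transf-positive} I would instead compute $a_{\lE} = \evec{1} u_{\lE}$ directly from the expression $u_{\lE} = (\text{word in } g_1, g_2)\,u_{\lW}$: since the entries of each $g_i$ are linear in the components of $u_{\lW}$ up to a common denominator $v_i u_{\lW}$, the quantity $\evec{1} u_{\lE}$ becomes, after clearing denominators, a single $2\times 2$ determinant, and expanding it produces both the prefactor $1/a_{\lW}$ and the shape $a_{\lN} a_{\lS} - (\cdots)(a_{\lN} - \cdots)(a_{\lS} - \cdots)$ of the right-hand side.

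The main obstacle is combinatorial rather than conceptual: one must keep the orientation and ``which region is above'' conventions of \cref{fig:crossing-regions,fig:region-orientation-rule} perfectly consistent, so that each $m_i$ ends up with the correct exponent and the signs in \cref{def:chi parameters} propagate correctly, and then massage the resulting rational functions into exactly the normalized form asserted. A cleaner route, which I would use as a sanity check, is to recognize the $a_j$ and $b_i$ as affine coordinates on copies of $\mathbb{C}$ attached to the vertices and edges of the ideal octahedron at $X$ and to read the formulas off as the coordinate change induced by the face-pairing maps; this is the viewpoint of \cite{McphailSnyder2024octahedralcoordinateswirtingerpresentation}, from which the lemma is quoted, and it makes manifest that the formulas are forced rather than ad hoc.
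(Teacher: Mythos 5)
The paper does not prove this lemma: it cites it as [Theorem 2.11] of the companion paper \cite{McphailSnyder2024octahedralcoordinateswirtingerpresentation}, and the one-sentence remark following the statement traces the formulas to the action of the outer \(R\)-matrix \(\mathcal{R}\) on central characters of \(\weyl\), routed through the octahedral geometry of \cite{McphailSnyder2022hyperbolicstructureslinkcomplements}. Your proposal --- a direct local verification unwinding \cref{def:chi parameters} against the Wirtinger relation of \cref{thm:wirtinger} and the shadow rule --- is a legitimately different and more elementary route, and your key identifications are correct: for a positive crossing \(g_{2'} = g_1^{-1} g_2 g_1\) and \(g_{1'} = g_1\) force \(v_{1'} = v_1\) and \(v_{2'} \propto v_2 g_1\); the Wirtinger relation is precisely what makes the two ways of reaching \(u_{\lE}\) from \(u_{\lW}\) agree; and every parameter in \cref{def:chi parameters} is degree zero in each \(v_i\), so the scaling ambiguity is harmless. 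Where the cited route gets the formulas as a specialization of a braiding (so coherence properties come for free), your route buys self-containment and hand-verifiability, at the cost of the bookkeeping you flag. One caution on execution: the parameters \(a_j = \evec{1} u_j\) and \(b_i = -v_i\evec{2}/(v_i u_i)\) are written in the standard frame \(\evec{1}, \evec{2}\), so if you pass to a basis adapted to \(v_1\) you must also transport those definitions or translate back at the end; it is usually less error-prone to stay in the standard basis and observe that \(g_i\) is pinned down by \(v_i\), \(m_i\), and the shadow transport \(u \mapsto g_i u\), all of which enter the \(a\)'s and \(b\)'s linearly up to a common denominator. Your closing observation that the \(a_j, b_i\) are affine coordinates on the ideal octahedron and the relations are the induced face-pairing coordinate changes is exactly the viewpoint of the cited reference.
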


When a shape parameter is \(0\), \(1\), or \(\infty\) it represents a geometrically degenerate tetrahedron whose vertices coincide.
The admissibility condition ensures that each \(z_{i}^{0}\) is not \(0\) or \(\infty\), but pinched with \(z_{i}^{0} = 1\) and \(z_{i}^{1} = \infty\) are possible.
Allowing these is useful both for technical reasons and to understand \(\qfunc{}\) in the geometrically degenerate limit.
When constructing quantum hyperbolic invariants (and when computing the Chern-Simons invariant) one frequently encounters functions%
\note{
  See \cref{sec:quantum dilogarithms} for the examples used in this paper.
}
\(f : \mathbb{C} \times \mathbb{Z} \to \mathbb{C}\) obeying a quasi-periodicity relation
\[
  f(z|k+1) = f(z|k) (1 - z)^{-1}.
\]
Each function \(f\) is assigned to a single tetrahedron, so in the degenerate limit \(z \to 1\) we can encounter poles.
In the octahedral decomposition such functions always occur in groups \(f(z_{3}) f(z_{4})/f(z_{1}) f(z_{2})\) and the \(z_{i}\) go to \(1\) simultaneously so the poles cancel.
To do this properly we need to keep track of \emph{how} the poles cancel, i.e.\ we need to blow up along the singular subvariety.
We can view \cref{thm:segment relations} as saying that the region variables \(a_{j}\) give a blowup of the \(b\)-gluing variety \(\mathfrak{B}_{D}\) \cite[Section 5]{McphailSnyder2022hyperbolicstructureslinkcomplements} along the subvariety of pinched colorings of \(D\).

While pinched crossings are geometrically degenerate both the classical and quantum Chern-Simons invariants \(\csfunc{}\) and \(\qfunc{}\) are still well-defined there: the poles cancel as above.
The contribution of a pinched crossing to the complex Chern-Simons invariant is trivial, but this is not the case for the quantum invariant.
For example, if \(X\) is a crossing with \(g_{1} = g_{2} = (-1)^{\nr -1} \in \slg\) then \(\qfunc{X}\) recovers Kashaev's \(R\)-matrix \cite{Kashaev1995} (\cref{thm:pinched braiding is defined}).

\subsection{Equivalence of tangles via diagrams}%
\label{sec:Equivalence of tangles and diagrams}

Let \(D\) be a colored diagram.
The rules defining shadow colorings are compatible with (framed, oriented) Reidemeister moves in the sense that applying a move to \(D\) uniquely defines the coloring of the new diagram \(D'\).
However, this coloring may not be admissible, and here we explain how to work around this.

\begin{figure}
  \centering
  \begin{subfigure}[t]{\textwidth}
    \centering
    \begin{tikzpicture}[line width = 1, scale = 0.5, xscale = 1.5, baseline={(current bounding box.center), line cap = rect}]
      \draw[->] (0,1) to [out = 240, in = 180] (1.5,0);
      \draw[white, line width=10] (-0.5,0) to [out = 00, in = -60] (1,1);
      \draw[-] (-0.5,0) to [out = 00, in = -60] (1,1);
      \draw[-] (1,1) to [out = 120, in = 60] (0,1);
      \begin{scope}[shift={(2,0)}]
        \draw[-] (-0.5,0) to [out = 00, in = -60] (1,1);
        \draw[-] (1,1) to [out = 120, in = 60] (0,1);
        \draw[white, line width=10] (0,1) to [out = 240, in = 180] (1.5,0);
        \draw[->] (0,1) to [out = 240, in = 180] (1.5,0);
      \end{scope}
    \end{tikzpicture}
    \caption{\(\reid{1f}\)}
  \end{subfigure}

  \begin{subfigure}[t]{0.3\textwidth}
    \centering
    \begin{tikzpicture}[line width=1, scale=1, xscale = 1.5, baseline={(current bounding box.center)}, line cap = rect]
      \tikzBraiding{+}{(0, 1)}{(0, 0)}{(1, 0)}{(1, 1)}
      \tikzBraidingEast{-}{(1, 1)}{(1, 0)}{(2, 0)}{(2, 1)}
    \end{tikzpicture}
    \caption{\(\reid{2a}\)}
  \end{subfigure}
  \begin{subfigure}[t]{0.3\textwidth}
    \centering
    \begin{tikzpicture}[line width=1, scale=1, xscale = 1.5, baseline={(current bounding box.center)}, line cap = rect]
      \tikzBraiding{-}{(0, 1)}{(0, 0)}{(1, 0)}{(1, 1)}
      \tikzBraidingEast{+}{(1, 1)}{(1, 0)}{(2, 0)}{(2, 1)}
    \end{tikzpicture}
    \caption{\(\reid{2b}\)}
  \end{subfigure}
  \\
  \begin{subfigure}[t]{0.3\textwidth}
    \centering
    \begin{tikzpicture}[line width=1, scale=1, xscale = 1.5, baseline={(current bounding box.center)}, line cap = rect] 
      \draw[-] (0,1) \br (1,0) ;
      \draw[white, line width=10, line cap=butt] (0,0) \br (1,1) ;
      \draw[<-] (0,0) \br (1,1) ;
      \draw[->] (1,0)  \br (2,1);
      \draw[white, line width=10, line cap=butt] (1,1)  \br (2,0);
      \draw[-] (1,1)  \br (2,0);
    \end{tikzpicture}
    \caption{\(\reid{2}[+-]\)}
  \end{subfigure}
  \begin{subfigure}[t]{0.3\textwidth}
    \centering
    \begin{tikzpicture}[line width=1, scale=1, xscale = 1.5, baseline={(current bounding box.center)}, line cap = rect] 
      \draw[<-] (0,1) \br (1,0) ;
      \draw[white, line width=10, line cap=butt] (0,0) \br (1,1) ;
      \draw[-] (0,0) \br (1,1) ;
      \draw[-] (1,0)  \br (2,1);
      \draw[white, line width=10, line cap=butt] (1,1)  \br (2,0);
      \draw[->] (1,1)  \br (2,0);
    \end{tikzpicture}
    \caption{\(\reid{2}[-+]\)}
  \end{subfigure}
  \begin{subfigure}[t]{\textwidth}
    \centering
    \begin{tikzpicture}[line width=1, scale=1, xscale = 1.5, baseline={(current bounding box.center)}, line cap = rect]
      \tikzBraiding{+}{(0, 2)}{(0, 1)}{(1, 1)}{(1, 2)}
      \tikzBraiding{+}{(1, 1)}{(1, 0)}{(2, 0)}{(2, 1)}
      \tikzBraiding{+}{(2, 2)}{(2, 1)}{(3, 1)}{(3, 2)}
      \draw[-] (1,2) to (2,2);
      \draw[-] (0,0) to (1,0);
      \draw[-] (2,0) to (3,0);
      \tikzBraiding{-}{(3, 2)}{(3, 1)}{(4, 1)}{(4, 2)}
      \tikzBraiding{-}{(4, 1)}{(4, 0)}{(5, 0)}{(5, 1)}
      \tikzBraidingEast{-}{(5, 2)}{(5, 1)}{(6, 1)}{(6, 2)}
      \draw[-] (4,2) to (5,2);
      \draw[-] (3,0) to (4,0);
      \draw[->] (5,0) to (6,0);
    \end{tikzpicture}
    \caption{\(\reid{3}\)}
    \label{fig:reid moves that vanish:3}
  \end{subfigure}
  \caption{
    We call replacing an appropriate number of parallel unlinked strands with one of the diagrams above (or vice-versa) a colored Reidemeister move.
  }
  \label{fig:reid moves that vanish}
\end{figure}

\begin{definition}
  \label{def:R move log-decorated}
  Let \(D\)  and \(D'\) be colored diagrams.
  We say they are related by a \defemph{colored Reidemeister move} if \(D'\) is obtained from \(D\) by replacing a set of parallel strands by one of the diagrams in \cref{fig:reid moves that vanish} or vice-versa.
  We say the move is  \defemph{admissible} if the shadow colorings of both diagrams are admissible.
\end{definition}

This description of  the \(\reid{3}\) move is more convenient than the standard one because it is easier to work with endomorphisms than general morphisms.
For example, the determinant of a vector space endomorphism can be canonically identified with a scalar (i.e.\ is basis-independent).
This is used in the proof of \(\reid{3}\) invariance for both \(\csfunc{}\) and \(\qfunc{}\) along with the following technical lemma:

\begin{lemma}
  \label{thm:reid3 is connected}
  \(\admvar{\reid{3}}\) is path-connected.
\end{lemma}

\begin{proof}
  The set \(\matset \subset \slg \times \pjsp\) of decorated matrices is a complex variety.
  If we include the degenerate shadow coloring with all colors \(0\) the space \(Y\) of all shadow colorings of \(D_{3}\) is \(\matset^{3} \times \mathbb{C}^{2}\).
  Clearly \(Y\) is path-connected.
  The set of \emph{admissible} colorings \(\admvar{\reid{3}}\) is obtained by removing finitely many complex hypersurfaces from \(Y\), so it is still path-connected.\note{A complex hypersurface has \emph{real} codimension \(2\).}
\end{proof}

\begin{definition}
  \label{def:gauge transformations}
  Let \(D\) be a colored diagram with arc colors \(i \mapsto (g_{i}, [v_{i}])\) and region colors \(j \mapsto u_{j}\).
  A \defemph{gauge transformation} of it is a coloring of the form
  \begin{itemize}
    \item[(A)] \(i \mapsto (h^{-1} g_i h, [v_{i} h])\) and \(j \mapsto h^{-1} u_j\)
    \item[(B)] \(i \mapsto (g_i, [v_{i}])\) and \(j \mapsto h^{-1} u_j\)
  \end{itemize}
  for some \(h \in \slg\).
  We say the new colorings are the images of \defemph{type (A)} and  \defemph{type (B)} gauge transformations  \defemph{by \(h\)}.
\end{definition}
It is not hard to see that the gauge transformation of a coloring is still a coloring and that the underlying representation \(\rho : \pi(D) \to \slg\) is conjugated under a type (A) transformation and unchanged under type (B).
Any two shadow colorings with the same underlying representation are (B) gauge-equivalent.

\begin{figure}
  \centering
  \subcaptionbox{ 
    \label{fig:gauge-transformation-A} \(D'\) is obtained from \(D\) by a type (A) gauge transformation by \(h\).
    }{
    \begin{tikzpicture}[line width=1, scale=2] 
      \begin{scope}[baseline = (D)]
        \node[draw,name = D, minimum height = 1cm, minimum width = 1.5cm] at (1,0) {\(D\)};
        \draw[line width = 3] (0,0) to (D.west);
        \draw[line width = 3] (D.east) to (2,0);
        \draw[->] (0,0.5) to (2, 0.5);
        \node[right] at (2, 0.5) {\(h\)};
      \end{scope}
      \draw[->, color = accent] (2.25, 0)  to (2.75,0);
      \begin{scope}[baseline = (D), shift = {(3, 0)}]
        \node[draw,name = D, minimum height = 1cm, minimum width = 1.5cm] at (1,0) {\(D'\)};
        \draw[line width = 3] (0,0) to (D.west);
        \draw[line width = 3] (D.east) to (2,0);
        \draw[white, line width = 6] (0,0.5) .. controls (0.5, 0.5) and (0.5, -1) .. (1, -1) .. controls (1.5, -1) and (1.5, 0.5) .. (2,0.5);
        \draw[->] (0,0.5) .. controls (0.5, 0.5) and (0.5, -1) .. (1, -1) .. controls (1.5, -1) and (1.5, 0.5) .. (2,0.5);
        \node[right] at (2, 0.5) {\(h\)};
      \end{scope}
    \end{tikzpicture}
    }
  \subcaptionbox{
    \label{fig:gauge-transformation-B} \(D''\) is obtained from \(D\) by a type (B) gauge transformation by \(h\).
  }{
    \begin{tikzpicture}[line width=1, scale=2] 
      \begin{scope}[baseline = (D)]
        \node[draw,name = D, minimum height = 1cm, minimum width = 1.5cm] at (1,0) {\(D\)};
        \draw[line width = 3] (0,0) to (D.west);
        \draw[line width = 3] (D.east) to (2,0);
        \draw[->] (0,0.5) to (2, 0.5);
        \node[right] at (2, 0.5) {\(h\)};
      \end{scope}
      \draw[->, color = accent] (2.25, 0)  to (2.75,0);
      \begin{scope}[baseline = (D), shift = {(3, 0)}]
        \draw[->] (0,0.5) .. controls (0.5, 0.5) and (0.5, -1) .. (1, -1) .. controls (1.5, -1) and (1.5, 0.5) .. (2,0.5);
        \draw[white, line width = 10] (0,0) to (2,0);
        \node[draw,name = D, minimum height = 1cm, minimum width = 1.5cm] at (1,0) {\(D''\)};
        \draw[line width = 3] (0,0) to (D.west);
        \draw[line width = 3] (D.east) to (2,0);
        \node[right] at (2, 0.5) {\(h\)};
      \end{scope}
    \end{tikzpicture}
  }
  \caption{
    Graphical description of gauge transformations.
    The thick strands represent bundles of incoming and outgoing segments (possibly empty) with arbitrary colorings.
  }
  \label{fig:gauge-transformation}
\end{figure}

The letters A and B are chosen because of the picture in \cref{fig:gauge-transformation}: a type (A) transformation comes from pulling a strand colored by \(h\) across and \textbf{A}bove the diagram, while for type (B) we pull across and \textbf{B}elow.
These are quite similar to the diagrammatic gauge transformations of \textcite[Section 3.3]{Blanchet2018}.

\begin{definition}
  \label{def:equivalent tangle diagrams}
  Two colored diagrams \(D, D'\) are \defemph{colored isotopic} if
  \begin{enumerate}
    \item their underlying tangles are framed oriented isotopic,
    \item the colorings give the same log-decorated representation of the underlying tangle, and
    \item they have the same shadow coloring of their topmost region (hence of every region).
  \end{enumerate}
\end{definition}

It is obvious that diagrams related by a sequence of admissible colored Reidemeister moves are colored isotopic.
Unlike the usual situation in knot theory the converse may not hold.
The problem is that even if \(D\) and \(D'\) are admissible every sequence of Reidemeister moves between them might involve an inadmissible diagram.
Following \textcite{Blanchet2018} we overcome this problem by extending our allowed diagram moves to include gauge transformations.
Because a gauge transformation by \(h\) can be represented by pulling a strand colored by \(h\) across a diagram (a sequence of \(\reid{3}\) moves) so we simply need to allow adding extra untangled strands.

\begin{definition}
  Let \(D\) be a colored tangle diagram.
  A \defemph{stabilization} of \(D\) is a disjoint union of \(D\) with a single strand \(I\) with an arbitrary admissible coloring, as in \cref{fig:stabilizations}.
  A \defemph{destabilization} is the inverse operation that removes a single unlinked strand from the top or bottom of a diagram.
\end{definition}

\begin{figure}
  \centering
  \subcaptionbox{
    \label{fig:stabilizations-top}
    \(I \du D\)
    }{
    \begin{tikzpicture}[line width=1, scale=2, baseline = (D)] 
      \node[draw,name = D, minimum height = 1cm, minimum width = 2cm] at (1,0) {\(D\)};
      \draw[line width = 3] (0,0) to (D.west);
      \draw[line width = 3] (D.east) to (2,0);
      \draw[->] (0,0.5) to (2,0.5);
    \end{tikzpicture}
  }
  \subcaptionbox{
    \label{fig:stabilizations-bottom}
    \(D \du I\)
    }{
    \begin{tikzpicture}[line width=1, scale=2, baseline = (D)] 
      \node[draw,name = D, minimum height = 1cm, minimum width = 2cm] at (1,0.5) {\(D\)};
      \draw[line width = 3] (0,0.5) to (D.west);
      \draw[line width = 3] (D.east) to (2,0.5);
      \draw[->] (0,0) to (2,0);
    \end{tikzpicture}
  }
  \caption{ Stabilizations of a diagram \(D\). The thick strands represent collections of incoming and outgoing segments.}
  \label{fig:stabilizations}
\end{figure}

\begin{theorem}
  \label{thm:diagrams linked by stabilization and R}
  Any colored isotopic \(\slg\)-colored log-decorated tangle diagrams \(D\) and \(D'\) are related by a stabilization, a sequence of admissible Reidemeister moves, and a destabilization.
\end{theorem}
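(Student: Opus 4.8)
The plan is to reduce to the classical Reidemeister theorem for the underlying uncolored diagrams and then make the resulting move sequence admissible by conjugating the holonomy into general position, using that gauge transformations are realized by stabilizations together with the moves of \cref{fig:R-moves-oriented-framed}.

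First I would apply the usual framed oriented Reidemeister theorem to the underlying tangles of \(D\) and \(D'\): this produces a finite sequence of diagrams \(E_{0},E_{1},\dots,E_{n}\) connected by framed oriented Reidemeister moves, with \(E_{0}\) underlying \(D\) and \(E_{n}\) underlying \(D'\). Since shadow colorings are compatible with Reidemeister moves once the boundary region and segment colors are fixed (\cref{sec:Reidemeister moves}), the coloring of \(D\) transports uniquely along this sequence to colorings \(\chi_{i}\) of \(E_{i}\), consecutive ones related by a colored Reidemeister move; and because a coloring is determined by its boundary data and induced decorated representation (\cref{thm:shadow colorings one region}), both of which \(D\) and \(D'\) share by \cref{def:equivalent tangle diagrams}, the terminal \((E_{n},\chi_{n})\) is precisely \(D'\) with its coloring. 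The difficulty is that the intermediate \((E_{i},\chi_{i})\) need not be admissible, so this is not yet a sequence of admissible colored moves.

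To repair this I would arrange never to visit an inadmissible diagram. A type (A) gauge transformation by \(h\in\slg\) --- conjugating \(\rho\) by \(h\) and transporting the shadows accordingly --- is realized by dragging a new \(h\)-colored strand across the top of the diagram: a stabilization, a sequence of the moves of \cref{fig:R-moves-oriented-framed}, and a destabilization; likewise a type (B) transformation drags a strand across the bottom. For \(h\) in a dense open subset of \(\slg\), applied to an \emph{admissible} colored diagram, every diagram occurring in such a drag is again admissible. Starting from the admissible \(D\) I would therefore chain together admissible gauge moves with \(h\) near the identity, moving \(\rho\) by small generic steps inside the locus of admissibly shadow-colorable colorings of \(D\), until \(\rho\) reaches a sufficiently generic representation \(\rho_{\mathrm{gen}}\); and do the same with a common sequence of gauge moves applied to \(D'\). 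If \(\rho_{\mathrm{gen}}\) is chosen in the set \(U\) of decorated representations for which the coloring transported along the \emph{fixed} sequence \(E_{0},\dots,E_{n}\) is admissible at every \(E_{i}\), then the colored move sequence of the previous paragraph, now based at \(\rho_{\mathrm{gen}}\), consists entirely of admissible colored Reidemeister moves, joining \((D,\chi_{D}^{\mathrm{gen}})\) to \((D',\chi_{D'}^{\mathrm{gen}})\). Concatenating the gauge moves, this admissible sequence, and the inverse gauge moves relates the original \((D,\chi)\) and \((D',\chi')\) by admissible Reidemeister moves and (de)stabilizations. Finally I would equip each diagram in the sequence with a log-coloring whose boundary log-colors match its neighbour's and, using \cref{thm:log-decorations can be equivalent} together with integer shifts of the interior logarithms, arrange each induced log-decoration to equal the common log-decoration of \(D\) and \(D'\), so that every step is a log-decorated Reidemeister move or a (de)stabilization.

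The main obstacle is the Step 2 assertion that \(U\) is nonempty --- indeed dense and open --- and reachable: that conjugating into general position really makes the \emph{entire} fixed move sequence admissible at once while never leaving the admissible locus. Openness is immediate, since at each \(E_{i}\) admissibility is the nonvanishing of finitely many rational functions, the region and segment parameters of \cref{def:chi parameters}; density and nonemptiness need, for each \(E_{i}\) separately, that some conjugate of \(\rho\) admits an admissible shadow coloring on \(E_{i}\) --- the orbit statement of \cref{sec:gauge transformations} --- followed by intersecting finitely many dense open conditions. One also needs the admissibly-colorable locus of a fixed diagram to be connected in the analytic topology, so that small admissible gauge steps can actually navigate to a generic point, together with the routine checks that strand-drags realize gauge transformations and that integer shifts of log-colorings suffice to match log-decorations. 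I expect the connectedness and general-position bookkeeping, rather than any single computation, to be where the real work lies.
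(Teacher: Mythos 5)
Your outline is morally sound and rests on the same two pillars the paper uses—realizing gauge transformations by strand-drags across a (de)stabilized diagram, and observing that admissibility is a finite collection of nonvanishing conditions so that a generic gauge shift clears all of them at once—but the implementation you sketch is genuinely more complicated than the paper's, and carries an extra burden the paper entirely avoids. The paper does not navigate the admissible locus step-by-step by small gauge moves: it stabilizes once, adding a single strand \(I\), and parametrizes the colorings of \(I\) by an algebraic set \(Y=\{(g,v):vg=mv\}\). For each move \(\Omega_i\) in the fixed Reidemeister sequence relating the underlying tangles, the locus \(X_i\subset Y\) where \(\Omega_i\) is still inadmissible after gauge transforming by \(y\in Y\) is a finite union of proper Zariski closed subsets; hence \(Y\setminus(Z\cup X_0\cup\cdots\cup X_n)\) is nonempty, and a \emph{single} generic choice of color for \(I\) makes every step of (stabilize, drag \(I\) across, perform the move sequence, drag back, destabilize) admissible simultaneously. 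By contrast, your plan of iteratively conjugating by \(h\) near the identity until \(\rho\) reaches \(\rho_{\mathrm{gen}}\) requires exactly the connectedness-of-the-admissible-locus and "small-step stays admissible" claims you flag at the end; the paper sidesteps these entirely because Zariski-generic nonemptiness is all it needs, not path-connectedness. Your proposal would likely succeed if you replaced the incremental navigation with the paper's one-shot generic stabilization, and it would also absolve you of the routine-but-fiddly check that each intermediate \(h\) keeps you in the admissible locus. The log-decoration bookkeeping you describe at the end is fine and matches what the paper implicitly relies on via \cref{thm:log-decorations can be equivalent}.
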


\begin{proof}
  It suffices to prove a version of \cite[Theorem 5.4]{Blanchet2018}: if \(D\) and \(D'\) are colored isotopic then there is a common stabilization so that \(I \du D\) and \(I \du D'\) are lined by a sequence of admissible colored Reidemeister moves.

  To prove this we need to find a color to assign to \(I\).
  Parametrize possible colors by the set
  \[
    Y = \set{ (g, v) \given g \in \slg, v \in \mathbb{C}^{2}, vg = m v  \text{ for some } m \in  \mathbb{C} }.
  \]
  It is clearly an algebraic set in some \(\mathbb{C}^{n}\), but certain elements of \(Y\) are inadmissible.
  For example, we can exclude
  \[
    Z = \set{(g,v) \in Y \given v = (0,0)}
  \]
  since we do not allow \((0,0)\) as a region color.
  There are also excluded sets coming from the isotopy.
  We show that in total there are finitely many excluded sets \(W, W', W''\), each of which is a union of finitely many hypersurfaces.
  We can then choose
  \[
    h \in Y \setminus \left(  Z \cup W \cup W' \cup W'' \right).
  \]
  This set is nonempty because \(Y\) is not a finite union of hypersurfaces.

  Consider the diagram \(\widetilde{D}\) obtained by pulling \(I\) across \(I \du D\) as in a type (A) gauge transformation.
  Choose a sequence of Reidemeister moves
  \[
    I \du D =
    D_{0}
    \xrightarrow{\Omega_{1}}
    D_{1}
    \xrightarrow{\Omega_{2}}
    \cdots
    \xrightarrow{\Omega_{n}}
    D_{n}
    =
    \widetilde{D}
  \]
  linking \(I \du D \) and \(\widetilde{D}\).
  For each \(\Omega_{i}\) let \(X_{i}\) be the subset of points \(h \in Y\) where  the move \(\Omega_{i}\) is inadmissible.
  Set \(W = \bigcup_{i} X_{i}\).
  It is clear that \(W\) is a finite union of hypersurfaces, and if \(h \not \in W\) (or \(Z\)) then we can gauge-transform \(D\) by \(h\).

  There is a similar set \(W'\) from connecting \(I \du D'\) to \(\widetilde{D}'\) via some chosen sequence of moves.
  Finally there is a set \(W''\) coming from the isotopy between \(\widetilde{D}\) and \(\widetilde{D}'\).
  It is clear each is a finite union of hypersurfaces.
\end{proof}

\subsection{Longitude eigenvalues}%
\label{sec:Longitude eigenvalues}

\begin{figure}
  \centering
  \begin{tikzpicture}[line width=1, baseline=30, scale=1, xscale = 1.5] 
    \coordinate (1) at (0,1);
    \coordinate (2) at (0,0);
    \coordinate (3) at (1,0);
    \coordinate (4) at (1,1);
    \draw[->] (2) node[left] {\(+\)} \br (4) node[right] {\(-\)};
    \draw[white, line width=10] (1) \br (3);
    \draw[->] (1) node[left] {\(-\)} \br (3) node[right] {\(+\)};
  \end{tikzpicture}
  \quad \quad
  \begin{tikzpicture}[line width=1, baseline=30, scale=1, xscale = 1.5] 
    \coordinate (1) at (0,1);
    \coordinate (2) at (0,0);
    \coordinate (3) at (1,0);
    \coordinate (4) at (1,1);
    \draw[->] (1) node[left] {\(+\)} \br (3) node[right] {\(-\)};
    \draw[white, line width=10] (2) \br (4);
    \draw[->] (2) node[left] {\(-\)} \br (4) node[right] {\(+\)};
  \end{tikzpicture}
  \caption{Signs for the contributions of the segment parameters to the squared longtidue.}
  \label{fig:crossing-log-dec-signs}
\end{figure}

A framed oriented link \(L\) has meridians \(\mer_{i}\) and longitudes \(\lon_{i}\) for each component \(i\).
A decorated representation assigns eigenvalues \(m_{i}\) to each \(\rho(\mer_{i})\), and because representatives of \(\mer_{i}\) and \(\lon_{i}\) from the same peripheral subgroup commute it also  determines longitude eigenvalues \(\ell_{i}\).
For tangles it is less clear how to define longitudes without passing to the fundamental groupoid.
However, for shadow-colored tangle diagrams we can still assign a number \(\ell_{i}^{2}\) to each component that behaves like the square of the longitude eigenvalue, as in \cref{thm:qfunc defined and properties}\ref{thm:qfunc defined and properties:log-decoration}.

Let \(D\) be an admissible colored diagram.
Each segment \(k\) of \(D\) meets two, one or zero crossings of \(D\) depending on whether it is a boundary or internal segment.
Let \(\epsilon_{k}\) be the sign \(\pm 1\) determined by \cref{fig:crossing-log-dec-signs} if \(k\) begins at a crossing and \(0\) if it begins at the boundary of \(D\).
Similarly, let \(\epsilon_{k}'\) be the sign (or \(0\)) determined by the endpoint of \(k\).
Write \(a_{k\dn}\) for the region parameter lying below \(k\).
Observe that \(|\epsilon_{k}'| - |\epsilon_{k}'|\)  is \(-1\)  if \(k\) starts at the boundary of \(D\), \(1\) if it ends at the boundary, and is \(0\) if both or neither.
\begin{definition}
  \label{def:longitude eigenvalues}
  For each component \(i\) of \(D\) let \(S_{i}\) be the set of diagram segments in component \(D\).
  The \defemph{squared longitude} of the \(i\)th component is
  \[
    \ell_{i}^{2}(D)
    \defeq
    \prod_{k \in S_{i}} b_{k}^{\epsilon_{k} + \epsilon_{k}'} a_{k \dn}^{|\epsilon_{k}'| - |\epsilon_{k}'|}
  \]
\end{definition}

\begin{example}
  \label{ex:longitude}
  Abbreviating \(g_{i} = (g_{i}, [v_{i}])\), for
  \begin{equation*}
    D=
    \begin{tikzpicture}[line width=1, scale=0.75, xscale = 1.5, baseline={(current bounding box.center)}] 
      \coordinate (1) at (0,1);
      \coordinate (2) at (0,0);
      \coordinate (3) at (1,0);
      \coordinate (4) at (1,1);
      \tikzBraidingEast{+}{(1)}{(2)}{(3)}{(4)}
      \node[left] at (1) {\(g_{1}\)};
      \node[left] at (2) {\(g_{2}\)};
      \node[right] at (3) {\(g_{1}\)};
      \node[right] at (4) {\(g_{2} \qn g_{1}\)};
      \node[above] at (0.5, 1) {\(u\)};
    \end{tikzpicture}
  \end{equation*}
  we have
  \begin{align*}
    \ell_{1}^{2}(D)
    &=
    \frac{
      1
      }{
      b_{1} a_{\lW}
    }
    b_{3} a_{\lS}
    =
    \frac{
      v_{1} u 
      }{
      v_{1} g_{1}^{-1} g_{2} g_{1} u 
    }
    \frac{
      \det( g_{2} g_{1} u, \evec{2})
      }{
      \det( g_{1} u, \evec{2})
    }
    \\
    \ell_{2}^{2}(D)
    &=
    \frac{
      b_{2}
      }{
      a_{\lS} 
    }
    \frac{
      a_{\lE}
      }{
      b_{4}
    }
    =
    \frac{
      v_{2} u 
      }{
      v_{2} g_{1} u
    }
    \frac{
      \det( g_{1}^{-1} g_{2} g_{1} u, \evec{2})
      }{
      \det( g_{2} g_{1} u, \evec{2})
    }
  \end{align*}
\end{example}

\begin{lemma}
  \label{thm:longitude eigenvalues make sense}
  \begin{thmenum}
    \item
      \label{thm:longitude eigenvalues make sense:preserved}

      The squared longitudes are preserved by admissible Reidemeister moves.
    \item
      \label{thm:longitude eigenvalues make sense:closed}
      If \(i\) is a closed component of \(D\) then \(\ell^{2}_{i}\) has a well-defined square root \(\ell_{i}\) given by
      \begin{equation}
        \label{eq:longitude-closed-rule}
        \ell_{i}(D) = 
        \prod_{k} b_{k}^{\eta_{k}}
        \text{ where }
        \eta_{k}
        \defeq
        \begin{cases}
          1 & \text{if segment \(k\) is over-under,}
          \\
          -1 & \text{if it is under-over, and}
          \\
          0 & \text{otherwise}
        \end{cases}
      \end{equation}
      and  \(\ell_{i}(D)\) is the eigenvalue of the blackboard-framed longitude distinguished by the decoration.
      \qedhere
  \end{thmenum}
\end{lemma}

\begin{proof}
  \ref{thm:longitude eigenvalues make sense:preserved}
  It suffices to check \(\ell_{i}^{2}(R) = 1\) for every diagram \(R\) in \cref{fig:reid moves that vanish}.
  This is trivial for the \(\reid{1f}\) and  \(\reid{2}\) diagrams and elementary for \(\reid{3}\): the claim is an equality about specific rational functions on \(\admvar{\reid{3}}\), so we can write these out in coordinates and check.

  \ref{thm:longitude eigenvalues make sense:closed}
  If \(i\) does not intersect the boundary it is easy to see that \(\epsilon_{k} + \epsilon_{k}' = 2 \eta_{k}\) and \(|\epsilon_{k}'| - |\epsilon_{k}'| = 0\), so the square of \eqref{eq:longitude-closed-rule} agrees with \(\ell_{i}^{2}\).

  In \cite[Theorem 4.4]{McphailSnyder2022hyperbolicstructureslinkcomplements} the author used the geometry of the octahedral decomposition to show that \(\ell_{i}\) is an eigenvalue of the blackboard-framed longitude.
  More precisely, there is a detailed proof that \(\ell_{i}^{2}\) is a square of the longitude eigenvalue and a sketch of how to check the sign by extending known results in the boundary-parabolic case.
  We can close this gap using \cref{eq:segment parameter}.

  Fix a segment \(k\) of component \(i\).
  Let \(x_{k}\) be the corresponding Wirtinger generator and \(y_{k}\) the blackboard-framed longitude commuting with \(x_{k}\).
  Following \(i\) around \(D\) starting at \(k\) we cross under arcs with Wirtinger generators \(x_{j_{1}}, \dots, x_{j_{n}}\).
  By a standard result in knot theory
  \[
    y_{k}
    =
    x_{j_{1}}^{\sigma_{1}} \dots x_{j_{n}}^{\sigma_{n}}
  \]
  where the \(\sigma_{\nu}\) are the signs of the crossings.
  For example, the longitude of the figure eight knot in \cref{fig:figure-eight-arcs} is given in \cref{eq:figure-eight longitude}.
  By definition
  \[
    v_{k} \rho(x_{k}) = m_{i}^{-1} v_{k}
    \text{ and }
    v_{k} \rho(y_{k}) = \widetilde{\ell}_{i}^{-1} v_{k}
  \]
  temporarily writing \(\widetilde{\ell}_{i}\) for the actual longitude eigenvalue;
  our claim is that \(\tilde{\ell}_{i} = \ell_{i}(D)\).
  Let \(u\) be the color of the region above \(k\).
  By using \cref{ex:longitude} and a similar computation for negative crossings, it is not hard to see that the contributions of the over-arcs to \eqref{eq:longitude-closed-rule} cancel out and the contributions of the under-arcs telecsope, so that
  \[
    \ell_{i}(D) =
    \frac{
      v_{k} u
      }{
      v_{k} \rho(y_{k}) u
    }
    =
    \widetilde{\ell}_{i}.
    \qedhere
  \]
\end{proof}

\section{Invariants from models of \texorpdfstring{\(\tcat\)}{T}}
\label{sec:invariants from models}
We will define \(\qfunc{}\) by using a version of the Reshetikhin-Turaev construction \cite{Reshetikhin1990} due to \citeauthor{Kashaev2005} \cite{Kashaev2005}.
We have a topological category \(\tcat\) of tangle diagrams colored with geometric data.
If we can specify how to interpret elementary tangles (which generate \(\tcat\)) as morphisms of an algebraic category \(\mathcal{C}\) and these morphisms are invariant under Reidemeister moves then we get functorial tangle invariants.
Following \textcite{Blanchet2018}%
\note{
  More precisely, our models correspond to what they call a \defemph{regular representation} of the underlying biquandle, while our pre-models correspond to their \defemph{Yang-Baxter models}.
  We have changed terminology to preserve the term ``representation'' for \(\slg\)-representations.
}
we assume \(\mathcal{C}\) is a pivotal category and call the required data a \defemph{model}
of \(\tcat\) in \(\mathcal{C}\).
Roughly speaking of a model is a solution of the parametrized Yang-Baxter equation (i.e.\ colored braid relation) valued in \(\mathcal{C}\) with parameters the colorings of \(\tcat\).

In this section we define \(\tcat\) and models of it in a pivotal category \(\mathcal{C}\) and prove that these give tangle invariants.
When the quantum dimensions of the relevant objects of the target category \(\mathcal{C}\) vanish (as they do for our main example \(\qfunc{}\)) these tangle invariants will be \(0\) on any link, so we explain how to use a standard method to obtain nontrivial link invariants.
We conclude the section by giving some simple examples of models valued in \(\vect\) closely related to quandle cocycles.
More interesting examples are the classical Chern-Simons invariant \(\csfunc{}\) defined in \cref{sec:The Chern-Simons invariant of a tangle} and its quantization \(\qfunc{}\) defined in \cref{sec:construction of invariant}.

\subsection{The tangle category}
\label{sec:tcat}
Usually in quantum topology we view (oriented, framed) tangles as a monoidal category:
\begin{description}
  \item[objects] points
  \item[morphisms] tangles
  \item[monoidal product] disjoint union of points (and tangles)
  \item[composition] tangle composition (union along boundary points)
\end{description}
This extends to tangles colored by a quandle \(\matset\) in the obvious way: now the points and tangle (diagrams) are colored by elements of \(\matset\).

To keep track of the octahedral shape parameters and define our invariants we need to also introduce region labels, the shadow colors \(\shadset\).
Because disjoint union of tangles needs to preserve these the monoidal product is no longer globally defined: there are domains and codomains, just as for tangle composition.
To handle these we consider a \(2\)-category of tangles obtained a sort of delooping (\cref{def:delooping}) fibered over a set \(\shadset\) with an action of \(\matset\).
Now the objects are region labels, the \(1\)-morphisms are colored points transforming between the region labels, and the \(2\)-morphisms are tangles.
Because we need to restrict to admissible colorings we work with a category of tangle diagrams, not tangles (\cref{rem:morphisms are diagrams}).

\begin{definition}
  \label{def:tcat}
  \(\tcat\) is the strict \(2\)-category with
  \begin{description}
    \item[objects] elements of \(\shadset = \mathbb{C}^{2} \setminus {0}\) (shadow colors)
    \item[\(1\)-morphisms] lists of oriented points with log-decorated matrices and a choice of shadow coloring.
      If \(g = (g, [v])\) is a decorated matrix and  \(\mu\) is a log-meridian, then \((+, g, \mu, u )\) is a morphism \(u \to gu\) and similarly \((-, g, \mu, u)\) is a morphism \(u \to g^{-1} u\).
      These generate \(1\)-morphisms under composition.
    \item[\(1\)-morphism composition] disjoint union of points (and tangles) along compatible region colors.
    Two \(1\)-morphisms \(S : u \to u', S' : u' \to u''\) (lists of colored, oriented points) are composable when the bottom region color of \(S\) agrees with the top color of \(S'\), and then the composition \(S \du S'\) is the disjoint union of points.
    \item[\(2\)-morphisms] oriented, framed, admissibly shadow-colored, log-decorated tangle diagrams up to planar isotopy.%
      \note{
        Recall that a \defemph{planar isotopy} is an isotopy of the underlying graph of the tangle diagram.
      }
      We call these \defemph{colored diagrams} for short.
    \item[\(2\)-morphism composition] union along boundaries with matching colors and orientations.
      Two \(2\)-morphisms \(D : S \to S', D': S' \to S''\) are composable when the colors and orientations of their boundary points (i.e.\@ the codomain and domain \(1\)-morphisms) \(S'\) match.
      The underlying diagram of the composition \(D D'\) is the usual composition of tangles, and the compatibility condition means \(D D'\) inherits its coloring from \(D\) and \(D'\).
  \end{description}
  Our convention is to draw the composition of \(1\)-morphisms \emph{vertically} and of \(2\)-morphisms \emph{horizontally}.
  We compose morphisms left-to-right.
\end{definition}

\begin{example}
  \label{ex:2-morphism braiding}
  Abbreviate
  \begin{align*}
    g_{1} &= (g_{1}, [v_{1}], \mu_{1})
    \\
    g_{2} &= (g_{1}, [v_{1}], \mu_{2})
    \\
    g_{2} \qn g_{1} &= (g_{1}^{-1} g_{2} g_{1}, [v_{2} g_{1}], \mu_{2})
  \end{align*}
  Choosing an admissible \(u \in \shadset\), \((+, g_{1})\) is a \(1\)-morphism \(u \to g_{1} u\) and \((+, g_{2})\) is a \(1\)-morphism \( g_{1} u \to g_{2} g_{1} u\).
  Their composition is well-defined, and is a \(1\)-morphism \(u \to g_{2} g_{1} u\).
  The crossing diagram
  \[
    \begin{tikzpicture}[line width=1, baseline=30, scale=1.5] 
      \draw[->] (0,0) node[left] {\(g_{2}\)} \br (1.5,1) ;
      \draw (1.5,1) node[right] { \(  g_{2} \qn g_{1} \) };
      \draw[white, line width=10] (0,1) node[left] {} \br (1.5,0) node[right] {};
      \draw[->] (0,1) node[left] {\(g_{1}\)} \br (1.5,0) ;
      \draw (1.5,0) node[right] {\(g_{1}\)};
      \draw (0, 0.5) node { \(g_{1} u\) };
      \draw (2, 0.5) node { \(g_{1}^{-1} g_{2} g_{1} u\) };
      \draw (0.75, 0) node { \(g_{2} g_{1} u\) };
      \draw (0.75, 1) node { \(u\) };
    \end{tikzpicture}
  \]
  is a \(2\)-morphism of \(\tcat\) between the \(1\)-morphisms
  \(
    g_{1} g_{2} : u \to g_{2} g_{1} u
  \)
  and
  \(
    (g_{2} \qn g_{1}) g_{1} : u \to g_{2} g_{1} u
  \), leaving the orientations implicit.
\end{example}

\begin{example}
  \label{ex:2-morphism cap}
  Abbreviate \(g = (g, [v], \mu)\) as before.
  Picking a shadow color \(u\), we can view \((-,g)\) as a morphism \(u \to g^{-1} u\) and \((+,g)\) as a morphism \(g^{-1} u \to u\).
  The cap diagram
  \[
    \begin{tikzpicture}[line width=1, baseline=30, scale=1.5] 
      \draw[->, looseness = 2] (0,0) node[left] {\(g\)} to[out=00,in=00]  (0,1) node[left] {\(g\)} ;
      \draw (0,1.25) node {\(u\)};
      \draw (0,.5) node[left] {\(g^{-1}u\)};
    \end{tikzpicture}
  \]
  is a \(2\)-morphism of \(\tcat\) from the composition \((-, g)(+, g) : u \to u \) to the identity  \(1\)-morphism \(u \to u\).
\end{example}

\begin{remark}
  \label{rem:morphisms are diagrams}
  The \(2\)-morphisms of \(\tcat\) are tangle \emph{diagrams}, not tangles: we do not consider two diagrams related by a Reidemeister move to be the same \(2\)-morphism of \(\tcat\).
  This is because we want to restrict to admissibly colored diagrams and this condition is not necessarily preserved under Reidemeister moves.
\end{remark}

\subsection{Pivotal categories}%
\label{sec:Pivotal categories}

We interpret the cups and caps of \(\tcat\) as evaluation and coevaluation morphisms of our category \(\mathcal{C}\), which in turn are specified by a pivotal structure.%
\note{
  Usually for quantum invariants we would specify a ribbon structure.
  For technical reasons we instead work in a pivotal category and then show that the braiding is compatible with the pivotal structure, as in \cite[Section 6.5]{Blanchet2018}.
}
This is a standard definition in quantum topology: see \cite{EGNO, GPMBook} for details.

Let \(\mathcal{C}\) be a (strict) monoidal category.
We will always assume that \(\mathcal{C}\) is linear over \(\mathbb{C}\) with \(\End_{\mathcal{C}}(\tsunit) = \mathbb{C}\) and \(\tsunit = \mathbb{C}\).
Suppose that \(\mathcal{C}\) has duals, so that each object \(V\) of \(\mathcal{C}\) has a dual object \(V^{*}\) and there are maps
\begin{gather*}
  \evup{V} \colon  V^{*} \otimes V \to \mathbb{C}
  \\
  \coevup{V} \colon  \mathbb{C} \to V \otimes V^{*} 
\end{gather*}
Our names for these match our (slightly nonstandard) graphical conventions:
\begin{align*}
  &\coevup{V} 
  =
  \begin{tikzpicture}[line width=1, baseline=10, scale=1] 
    \draw[->, looseness = 1.5] (0,0) node[right] {} to [out =180, in=180] (0,1) node[right] {\(V\)};
  \end{tikzpicture}
  &
  &\evup{V} 
  =
  \begin{tikzpicture}[line width=1, baseline=10, scale=1] 
    \draw[->, looseness = 1.5] (0,0) node[left] {} to [out =0, in=0] (0,1) node[left] {\(V\)};
  \end{tikzpicture}
\end{align*}
We want to interpret downward-oriented caps and cups consistently.
To do this we need an additional structure:

\begin{definition}
  \label{def:pivotal structure}
  A \defemph{pivotal structure} on \(\mathcal{C}\) is a natural family of isomorphisms
  \(
    \pi_{V} \colon V \to V^{**}
  \)
  compatible with the monoidal structure.
\end{definition}

We can now define \(\evdown{V} : V \otimes V^{*} \to \tsunit\) by 
\[
  \evdown{V} \defeq  \evup{V^{*}} (\pi_{V} \otimes \id_{V^{*}})
\]
and similarly
\[
  \coevdown{V} \defeq (\id_{V^{*}} \otimes \pi_{V}^{-1}) \coevup{V^{*}} 
\]
Together the maps are
\begin{equation}
  \label{eq:cup and cap conventions}
  \begin{aligned}
    &\coevup{V} 
    =
    \begin{tikzpicture}[line width=1, baseline=10, scale=1] 
      \draw[->, looseness = 1.5] (0,0) node[right] {} to [out =180, in=180] (0,1) node[right] {\(V\)};
    \end{tikzpicture}
    &&&
    &\evup{V} 
    =
    \begin{tikzpicture}[line width=1, baseline=10, scale=1] 
      \draw[->, looseness = 1.5] (0,0) node[left] {} to [out =0, in=0] (0,1) node[left] {\(V\)};
    \end{tikzpicture}
    \\
    &\coevdown{V} 
    =
    \begin{tikzpicture}[line width=1, baseline=10, scale=1] 
      \draw[<-, looseness = 1.5] (0,0) node[right] {\(V\)} to [out =180, in=180] (0,1) node[right] {};
    \end{tikzpicture}
    &&&
    &\evdown{V} 
    =
    \begin{tikzpicture}[line width=1, baseline=10, scale=1] 
      \draw[<-, looseness = 1.5] (0,0) node[left] {\(V\)} to [out =0, in=0] (0,1) node[left] {};
    \end{tikzpicture}
  \end{aligned}
\end{equation}

\begin{example}
  \label{ex:pivotal structure on vect}
  Let \(\vect\) be the category of finite-dimensional vector spaces over \(\mathbb{C}\).
  The dual object of \(V\) is the usual \(V^{*} \defeq \operatorname{Hom}_{\mathbb{C}}(V, \mathbb{C})\), and the maps are
  \begin{gather*}
    \evup{V} \colon f \otimes v \mapsto f(v)
    \\
    \coevup{V} \colon 1 \mapsto \sum_{i} v_{i} \otimes v^{i}
  \end{gather*}
  where \(i\mapsto v_{i}\) is a basis of \(V\) and \(i \mapsto v^{i}\) is the dual basis.
  The standard identification
  \[
    \pi_{V}(x) = (f \mapsto f(v))
  \]
  is a pivotal structure on \(\vect\), so the downward maps are
  \begin{gather*}
    \evdown{V} \colon v \otimes f \mapsto f(v)
    \\
    \coevup{V} \colon 1 \mapsto \sum_{i} v^{i} \otimes v_{i}
  \end{gather*}
\end{example}

For quantum invariants we want to work instead in the category \(\modcat{H}\) of finite-dimensional modules over a Hopf algebra \(H\) over \(\mathbb{C}\).
If \(V\) is a \(H\)-module the dual space is \(V^{*} = \operatorname{Hom}_{\mathbb{C}}(V, \mathbb{C})\) with \(H\)-action
\[
  h \cdot f \colon v \mapsto f(S(h) \cdot v)
\]
where the \defemph{antipode} \(S\) is part of the data of \(H\).
In general
\[
  v \mapsto (f \mapsto f(v))
\]
will \emph{not} be a map of \(H\)-modules \(V \to V^{**}\) because we may not have \(S^{2} = \id\).%
\note{
  Such Hopf algebras are called \defemph{involutive}.
  \(\qgrplong\) is not involutive.
}

\begin{definition}
  An invertible element \(\varpi \in H\) is called a \defemph{pivot} if
  \[
    \varpi h \varpi^{-1} = S^{2}(h)
  \]
  for all \(h \in H\).
  For \(V\) an \(H\)-module we define
  \begin{equation}
    \label{eq:pivot double dual}
    \pi_{V}
    \colon
    V \to V^{**}
    ,
    v \mapsto ( f \mapsto f(\varpi v) )
  \end{equation}
\end{definition}

It is an instructive exercise to work out that \(\pi_{V}\) is a map of \(H\)-modules, and more generally to verify the example below:

\begin{example}
  \label{ex:pivot gives pivotal}
  If \(\varpi\) is a pivot for \(H\), then the maps \eqref{eq:pivot double dual} define a pivotal structure on \(\modcat{H}\).
  The induced evaluation and coevaluation maps are
  \begin{align*}
    &\evup{V} \colon f \otimes v \mapsto f(v)
    &
    &\evdown{V} \colon v \otimes f \mapsto f(\varpi v)
    \\
    &\coevup{V} \colon 1 \mapsto \sum_{i} v_{i} \otimes v^{i}
    &
    &\coevdown{V} \colon 1 \mapsto \sum_{i} v^{i} \otimes \varpi^{-1} v_{i}
  \end{align*}
\end{example}

\subsection{Invariants from models}

\begin{definition}
  A \defemph{pre-model} \(\mathcal{F}\) of \(\tcat\) valued in a pivotal category \(\mathcal{C}\) is the minimal data needed to interpret a colored tangle diagram \(D\) of \(\tcat\) as a morphism of \(\mathcal{C}\).
  It consists of a family
  \(
    \mathcal{F}(g, u)
  \)
  of objects \(\mathcal{C}\) indexed by a log-decorated matrix \(g\) and a shadow color \(u\) and a family of invertible morphisms between them:
  \begin{align*}
    \mathcal{F}\leftfun(
    \begin{tikzpicture}[line width=1, scale=0.75, xscale = 1.5, baseline={(current bounding box.center)}] 
      \coordinate (1) at (0,1);
      \coordinate (2) at (0,0);
      \coordinate (3) at (1,0);
      \coordinate (4) at (1,1);
      \tikzBraidingEast{+}{(1)}{(2)}{(3)}{(4)}
      \node[left] at (1) {\(g_{1}\)};
      \node[left] at (2) {\(g_{2}\)};
      \node[right] at (3) {\(g_{1}\)};
      \node[right] at (4) {\(g_{4}\)};
      \node[above] at (0.5, 1) {\(u\)};
    \end{tikzpicture}
    \rightfun)
    \colon
    \mathcal{F}(g_{1}, u) 
    \otimes
    \mathcal{F}(g_{2}, g_{1} u) 
    \to
    \mathcal{F}(g_{4}, u) 
    \otimes
    \mathcal{F}(g_{1}, g_{4} u) 
  \end{align*}
\end{definition}

\begin{figure}
  \begin{equation*}
    \begin{tikzpicture}[line width = 1, scale = 1, yscale = 1.5, baseline={(current bounding box.center)}]
      \coordinate (1) at (1,1);
      \coordinate (2) at (0,1);
      \coordinate (3) at (0,0);
      \coordinate (4) at (1,0);
      \draw[->] (2) to [out = 270, in = 90] (4);
      \draw[white, line width=10] (1) \br (3);
      \draw[->] (1)  to [out = 270, in = 90] (3);
      \node[above right] at (1) {\(1\)};
      \node[above left] at (2) {\(2\)};
      \node[below left] at (3) {\(3\)};
      \node[below right] at (4) {\(4\)};
    \end{tikzpicture}
    =
    \begin{tikzpicture}[line width=1, scale=1, xscale = 1.5, baseline={(current bounding box.center)}] 
      \coordinate (1) at (0,1);
      \coordinate (2) at (0,0);
      \coordinate (3) at (1,0);
      \coordinate (4) at (1,1);
      \tikzBraidingEast{+}{(1)}{(2)}{(3)}{(4)}
      \coordinate (1a) at (0,2);
      \coordinate (1b) at (1,2);
      \draw[->, looseness = 1.5] (1a) to [out =180, in=180] (1);
      \draw[->] (1b) to (1a);
      \coordinate (3a) at (1,-1);
      \coordinate (3b) at (0,-1);
      \draw[->, looseness = 1.5] (3) to [out =0, in=0] (3a);
      \draw[->] (3a) to (3b);
      \node[right] at (1b) {\(1\)};
      \node[left] at (3b) {\(3\)};
      \node[left] at (2) {\(2\)};
      \node[right] at (4) {\(4\)};
    \end{tikzpicture}
  \end{equation*}
  \caption{
    Braidings that are not oriented left-to-right are determined by rotation (via planar isotopy) and the pivotal structure.
  }
  \label{fig:braiding rotated example}
\end{figure}

A pre-model defines a morphism \(\mathcal{F}(D, \mu)\) of \(\mathcal{C}\) for any colored diagram \(D\) in the standard way:
\begin{enumerate}
  \item 
    Put the diagram in Morse position (which can be done without applying any Reidemeister moves, i.e.\ by planar isotopy).
  \item
    Positively-oriented points and segments are assigned the objects \(\mathcal{F}(g, u)\).
  \item 
    Using the pivotal structure we assign negatively-oriented points and segments the appropriate dual modules and assign caps and cups are the evaluation and coevaluation morphisms.
  \item 
    Left-to-right braidings are assigned the braiding maps and their inverses, while the other braiding maps are determined by the pivotal structure as in \cref{fig:braiding rotated example}.
\end{enumerate}
This assignment is compatible with composition, or more formally is functorial and preserves tensor products.
Because \(\tcat\) is a \(2\)-category we need to state this slightly differently:

\begin{definition}
  \label{def:delooping}
  The \defemph{delooping} \(\delooping \mathcal{C}\) of a monoidal category \(\mathcal{C}\) is a \(2\)-category with
  \begin{description}
    \item[objects] a unique object \(\bullet\)
    \item[\(1\)-morphisms] the objects \(V\) of \(\mathcal{C}\) viewed as maps \(\bullet \to \bullet\)
    \item[\(1\)-morphism composition] of \(V, W : \bullet \to \bullet\) is their tensor product \(V \otimes W\).
    \item[\(2\)-morphisms] the morphisms \(f : V \to W\) of \(\mathcal{C}\) (linear maps intertwining the \(H\)-action)
    \item[\(2\)-morphism composition] composition \(\circ\) of functions
  \end{description}
  The relationships between \(\otimes\) and \(\circ\) are a special case of the distributive laws for a \(2\)-category.
\end{definition}

\begin{lemma}
  Any pre-model \(\mathcal{F}\) with values in \(\mathbb{C}\) defines a \(2\)-functor \(\mathcal{F} : \tcat \to \mathsf{B}\mathcal{C}\).
\end{lemma}

\begin{definition}
  \label{def:regular object}
  An object \(V\) of a monoidal category \(\mathcal{C}\) is \defemph{regular} \cite[Section 4.1]{Blanchet2018} if for all objects \(X, Y\) of \(\mathcal{C}\) the maps
  \begin{gather}
    \hom(X,Y) \to \hom(V \otimes X , V \otimes Y), f \mapsto \id_{V} \otimes f
    \\
    \hom(X,Y) \to \hom(X \otimes V , Y \otimes V), f \mapsto f \otimes \id_{V}
  \end{gather}
  are injective.
  In other words \(V\) is regular if tensoring with it is injective on morphisms.
\end{definition}

\begin{definition}
  We say a pre-model \(\mathcal{F}\) of \(\tcat\) valued in \(\mathcal{C}\) is a \defemph{model} if
  \begin{enumerate}
    \item the values of \(\mathcal{F}\) on colored points are regular objects of \(\mathcal{C}\), and
    \item the morphism assigned by \(\mathcal{F}\) to any of the Reidemeister move diagrams of \cref{fig:reid moves that vanish} is the identity map.
  \end{enumerate}
\end{definition}

\begin{corollary}
  \label{thm:models give invariants}
  For any model \(\mathcal{F}\) of \(\tcat\) and colored isotopic diagrams \(D\) and \(D'\)
  \[
    \mathcal{F}(D) = \mathcal{F}(D').
  \]
  Furthermore if \(D\) is a link diagram then \(\mathcal{F}(D)\) is gauge invariant.
\end{corollary}

\begin{proof}
  By \cref{thm:diagrams linked by stabilization and R} \(D\) and \(D'\) are linked by a stabilization, a sequence of admissible Reidemeister moves, and a (de)stabilization.
  We have
  \[
    \mathcal{F}(I \du D)
    =
    \mathcal{F}(I \du D')
  \]
  because \(\mathcal{F}\) is invariant under admissible Reidemeister moves, and then regularity implies
  \[
    \mathcal{F}(D)
    =
    \mathcal{F}(D')
    .
  \]
  The second claim follows from a simpler version of the argument in \cref{thm:link invariants from model} below.
\end{proof}

Given two tensor categories \(\mathcal{C}_{1}, \mathcal{C}_{2}\) over \(\mathbb{C}\) one can take their (Deligne) \defemph{tensor product} \(\mathcal{C}_{1} \boxtimes \mathcal{C}_{2}\) \cite[Section 1.11]{EGNO}.
We need two cases:
\begin{itemize}
  \item If each \(\mathcal{C}_{i} = \modcat{H_{i}}\) is the category of representations of a Hopf algebra \(H_{i}\) then  \(\mathcal{C}_{1} \boxtimes \mathcal{C}_{2} = \modcat{H_{1} \otimes_{\mathbb{C}} H_{2}}\).
  \item In particular for any \(\mathcal{C}\) we have \(\mathcal{C} \boxtimes \vect = \mathcal{C}\).
\end{itemize}
A similar operation applies to models, and this tensor product is useful when studying the relationship to the torsion in \cref{sec:torsion}.

\begin{definition}
  \label{def:product of models}
  If \(\mathcal{F}_{1}\) and \(\mathcal{F}_{2}\) are models valued in categories \(\mathcal{C}_{1}\), \(\mathcal{C}_{2}\) their \defemph{tensor product} \(\mathcal{F}_{1} \boxtimes \mathcal{F}_{2}\) is the model valued in \(\mathcal{C}_{1} \boxtimes \mathcal{C}_{2}\) whose value  on points is
  \[
    (\mathcal{F}_{1} \boxtimes \mathcal{F}_{2})(g, u)
    \defeq
    \mathcal{F}_{1}(g, u)
    \boxtimes
    \mathcal{F}_{2}(g, u)
  \]
  and similarly for the braidings.
  The trivial model \(\textsf{1}\) sending every point to \(\mathbb{C}\) and every crossing to the identity map is the unit for \(\boxtimes\), and a model \(\mathcal{F}\) is \defemph{invertible} if there is a \(\mathcal{G}\) with \(\mathcal{F} \boxtimes \mathcal{G} \) equivalent to \(\textsf{1}\).
\end{definition}

Both \(\csfunc{}\) and the cocycle invariants  \(\mathcal{F}_{\phi}\) of \cref{sec:Shadow cocycles} below are invertible models.
In general \(\qfunc{}\) is not invertible, but we explain in \cref{sec:recovering the chern-simons invariant} that \(\qfunc[1]{}\) and \(\csfunc{}\) are inverse models.

\subsection{Link invariants}%
\label{sec:Link invariants}

For some models like the Chern-Simons invariant we can simply evaluate on a link diagram to get link invariants.
However, the value of \(\qfunc{}\) on any link is \(0\) because of vanishing quantum dimensions.
(See \cref{sec:Properties of qfun} for a concrete explanation.)
To extract nontrivial invariants we use a standard trick.

\begin{definition}
  Let \(L\) be a link with a distinguished component \(K\).
  A \defemph{cut presentation} of \((L,K)\) is a tangle \(T\) with one incoming and outgoing component so that the closure of \(T\) is \(L\) and the open component corresponds to \(K\).
\end{definition}

\begin{definition}[\protect{\cite[Section 4.1]{Blanchet2018}}]
  \label{def:absolutely simple object}
  An object \(V\) of a pivotal category \(\mathcal{C}\) defined over \(\mathbb{C}\) is \defemph{absolutely simple} if the map \(\mathbb{C} \to \End_{\mathcal{C}}(V), z \mapsto z\id_{V} \) is bijective.
  In particular, a simple object is absolutely simple by Schur's Lemma.
  For an endomorphism \(f\) of an absolutely simple object \(V\) we write \(\sclbrak{f}\) for the scalar with
  \[
    f = \sclbrak{f} \id_{V}.
  \]
  A pre-model \(\mathcal{F}\) of \(\tcat\) valued in \(\mathcal{C}\) is \defemph{simple} if the values of \(\mathcal{F}\) on log-meridians of points are absolutely simple objects.
\end{definition}

Let \((L, K)\) be a link with a distinguished component, \(\rho\) a decorated representation of \(\extr{L}\), and \(\mu \) a log-decoration of \(\rho\).
Let \(\rho'\) be any representation of \(\extr{L}\) gauge-equivalent to \(\rho\).
Choose a cut presentation \(T\) of \((L, K)\) and an admissibly colored diagram \((D, \rho', u, \mu)\) of \(T\).
(It may be that \(\rho\) is not admissible for any diagram \(D\).)
If \(\mathcal{F}\) is simple, it makes sense to write
\begin{equation}
  \operatorname{F}(L, K ; \rho, \mu)
  \defeq
  \sclbrak{ \mathcal{F}(D, \rho', u, \mu) }
\end{equation} 

\begin{theorem}
  \label{thm:link invariants from model}
  Let \(\mathcal{F}\) be a simple model of \(\tcat\) in \(\mathcal{C}\).
  Then the scalar \(\operatorname{F}(L, K ; \rho, \mu)\) defined above is independent of the choice of:
  \begin{thmenum}
    \item
    \label{thm:link invariants from model:tangle}
      cut presentation \(T\) and diagram \(D\) of \(T\), and
    \item
    \label{thm:link invariants from model:gauge}
    representative \(\rho'\) of the gauge class of \(\rho\) and shadow coloring \(u\).
      \qedhere
  \end{thmenum}
\end{theorem}

\begin{proof}
  \ref{thm:link invariants from model:tangle}
  Let \((D, \rho, u, \mu)\) and  \((D', \rho, u, \mu)\) be colored diagrams of any two (potentially different) cut presentations of \((L, K; \rho, u)\).
  Here saying they have the same shadow coloring means they have the same shadow color of their topmost regions.
  By \cref{thm:tangle folk theorem} below \(D\) and \(D'\) are isotopic as tangles, so by \cref{thm:models give invariants} \(\mathcal{F}(D, \rho, u, \mu) = \mathcal{F}(D', \rho, u, \mu)\).

  \ref{thm:link invariants from model:gauge}
  Suppose \(D = (D, \rho, u, \mu)\) is a diagram of a cut presentation of \((L, K; \rho, \mu)\) and \(D' = (D', \rho', u', \mu)\) is a gauge transformation of \(D\).
  By definition \(
    \operatorname{F}(L, K; \rho, \mu)
    =
    \sclbrak{\mathcal{F}(D)}
  \)
  and we need to show that
  \[
    \sclbrak{\mathcal{F}(D)}
    =
    \sclbrak{\mathcal{F}(D')}.
  \]
  Assume \(D'\) is a type (A) gauge transformation of \(D\) as in \cref{fig:gauge-transformation-A}; an identical argument works for type (B) transformations.
  For some colors \(\chi, \psi\) we have
  \begin{align*}
    \sclbrak{\mathcal{F}(D)}
    \id_{\mathcal{F}(\psi)}
    \otimes
    \id_{\mathcal{F}(\chi)}
    =
    \mathcal{F}\leftfun(
    \begin{tikzpicture}[line width=1, scale=1, baseline = (D)] 
      \node[draw,name = D, minimum height = .5cm, minimum width = .75cm] at (1,0) {\(D\)};
      \draw (0,0) to (D.west);
      \draw[->] (D.east) to (2,0) node[right] {\(\chi\)};
      \draw[->] (0,0.5) to (2, 0.5);
      \node[right] at (2, 0.5) {\(\psi\)};
    \end{tikzpicture}
    \rightfun)
  \end{align*}
  Isotopy invariance gives
  \begin{align*}
    \mathcal{F}\leftfun(
    \begin{tikzpicture}[line width=1, scale=1, baseline = (D)]
      \begin{scope}[baseline = (D)]
        \node[draw,name = D, minimum height = .5cm, minimum width = .75cm] at (1,0) {\(D\)};
        \draw (0,0) to (D.west);
        \draw[->] (D.east) to (2,0) node[right] {\(\chi\)} ;
        \draw[->] (0,0.5) to (2, 0.5) ;
        \node[right] at (2, 0.5) {\(\psi\)};
      \end{scope}
    \end{tikzpicture}
    \rightfun)
    &=
    \mathcal{F}\leftfun(
    \begin{tikzpicture}[line width=1, scale=1, baseline = (D)]
      \node[draw,name = D, minimum height = .5cm, minimum width = .75cm] at (1,0) {\(D'\)};
      \draw (0,0) to (D.west);
      \draw[->] (D.east) to (2,0) node[right] {\(\chi\)};
      \draw[white, line width = 6] (0,0.5) .. controls (0.5, 0.5) and (0.5, -1) .. (1, -1) .. controls (1.5, -1) and (1.5, 0.5) .. (2,0.5);
      \draw[->] (0,0.5) .. controls (0.5, 0.5) and (0.5, -1) .. (1, -1) .. controls (1.5, -1) and (1.5, 0.5) .. (2,0.5);
      \node[right] at (2, 0.5) {\(\psi\)};
    \end{tikzpicture}
    \rightfun)
    \\
    &=
    \sclbrak{\mathcal{F}(D')}
    \mathcal{F}\leftfun(
    \begin{tikzpicture}[line width=1, scale=1, baseline = (D)]
      \draw[->] (0,0) to (2,0) node[right] {\(\chi\)};
      \draw[white, line width = 6] (0,0.5) .. controls (0.5, 0.5) and (0.5, -1) .. (1, -1) .. controls (1.5, -1) and (1.5, 0.5) .. (2,0.5);
      \draw[->] (0,0.5) .. controls (0.5, 0.5) and (0.5, -1) .. (1, -1) .. controls (1.5, -1) and (1.5, 0.5) .. (2,0.5);
      \node[right] at (2, 0.5) {\(\psi\)};
    \end{tikzpicture}
    \rightfun)
    \\
    &=
    \sclbrak{\mathcal{F}(D')}
    \mathcal{F}\leftfun(
    \begin{tikzpicture}[line width=1, scale=1, baseline = (D)]
      \node[name = D] (1,0) {};
      \draw[->] (0,0.5) to (2,0.5) node[right] {\(\psi\)};
      \draw[->] (0,0) to (2,0) node[right] {\(\chi\)};
    \end{tikzpicture}
    \rightfun)
    \\
    &=
    \sclbrak{\mathcal{F}(D')}
    \id_{\mathcal{F}(\psi)}
    \otimes
    \id_{\mathcal{F}(\chi)}.
  \end{align*}
  Because \(\mathcal{F}(\psi)\) and \(\mathcal{F}(\chi)\) are regular objects this implies
  \[
    \sclbrak{\mathcal{F}(D)}
    =
    \sclbrak{\mathcal{F}(D')}
    .
    \qedhere
  \]
\end{proof}

\begin{lemma}[Folk theorem]
  \label{thm:tangle folk theorem}
  Any two cut presentations of \((L,K)\) are isotopic as tangles.
\end{lemma}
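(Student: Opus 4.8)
The plan is to show that a cut presentation of $(L,K)$ is determined, up to boundary-fixing isotopy of tangles, by the single datum of the point of $K$ at which one cuts; since $K$ is connected, all cut presentations then coincide. First I would recast the notion: realizing $L\subset S^3$, a cut presentation is the same as a $3$-ball $B\subset S^3$ with $L\cap B=K\cap B$ a single unknotted, boundary-parallel arc in $B$ meeting $\partial B$ in two points, together with a chosen identification of the complementary ball $S^3\setminus\operatorname{int}B$ with the standard box carrying $D$. (Such a ball is automatically a regular neighbourhood of its core arc; and the residual freedom in the boundary identification is exactly what "isotopic as tangles" absorbs.) Under this dictionary the lemma becomes the statement that any two such balls $B_1,B_2$ are carried to one another by an ambient isotopy of the pair $(S^3,L)$ that preserves the distinguished component and is suitably standard on the boundary of the complementary box.

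The heart of the argument is producing this ambient isotopy. I would first shrink each $B_i$ to a thin regular neighbourhood $N(\kappa_i)$ of its core arc $\kappa_i\subset K$, keeping $N(\kappa_i)$ disjoint from $L\setminus K$, which is possible since $\kappa_i$ is. Choosing a thin tubular neighbourhood $N(K)\cong S^1\times D^2$ disjoint from $L\setminus K$, a flow supported in $N(K)$ that preserves $K$ and tapers to the identity near $\partial N(K)$ slides $N(\kappa_1)$ along $K$ without disturbing $L\setminus K$. Since $\kappa_1$ and $\kappa_2$ are proper sub-arcs of the circle $K$, and any sub-interval of $S^1$ can be carried to any other by a diffeomorphism isotopic to the identity, we may bring $N(\kappa_1)$ onto $N(\kappa_2)$, and then match the two tubular neighbourhoods exactly by a small final isotopy. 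This is all elementary general position.

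It remains to descend from such an ambient isotopy of $(S^3,L)$ to an honest boundary-fixing isotopy $T_1\simeq T_2$ of the $1$-$1$ tangles. Restricting to the complementary balls and straightening the isotopy in a collar of their boundary --- using that $\operatorname{Diff}^+(S^2,\{p,q\})$ is connected, so the boundary behaviour can be made the identity without moving the standard arcs of $T_i$ near the endpoints --- produces a diffeomorphism $g$ of the box fixing $\partial$ pointwise with $g(T_1)=T_2$. One then invokes the standard fact that $\operatorname{Diff}(D^3,\partial)$ is connected (Cerf, Hatcher), so $g\simeq\operatorname{id}$ rel $\partial$, and the resulting isotopy drags $T_2=g(T_1)$ back to $T_1$. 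Alternatively one can bypass $S^3$ entirely: moving the cut point past a single crossing, cup, or cap is visibly realized by a finite sequence of boundary-fixing isotopies of $1$-$1$ tangle diagrams --- these are precisely the "conjugation" moves familiar from the Markov theorem --- and concatenating them slides the cut point anywhere on $K$.

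I expect the last step to be the only real obstacle: what makes this a genuine (if folk) theorem rather than a triviality is that one is claiming tangle isotopy \emph{rel boundary} and not merely isotopy of links in $S^3$, so the argument must either cite connectivity of $\operatorname{Diff}(D^3,\partial)$ or carry out the (routine but fiddly) verification that each elementary slide of the cut point is an isotopy fixing the tangle's endpoints.
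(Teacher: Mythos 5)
Your argument is correct in outline but follows a genuinely different route from the paper. The paper uses an inversion trick (attributed to Conway): place $T$ inside a ball, close up outside to recover $L$, then invert $S^3$ across the boundary sphere; this turns the complementary box into a ``bead'' sitting on the distinguished component of $L$, and the claim becomes that two isotopic links with beads on corresponding components can be carried to each other by an ambient isotopy supported away from the beads. Inverting back then hands you a boundary-fixing tangle isotopy directly, with no explicit appeal to Cerf. Your approach instead works with the cutting balls head-on: shrink each to a tubular neighbourhood of an arc of $K$, slide it along $K$ by a flow supported in a tube, and then repair the boundary identification using connectivity of $\operatorname{Diff}(D^3,\partial)$ and of $\operatorname{Diff}^+(S^2,\{p,q\})$. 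The two arguments share the same mechanical core --- sliding the bead (or cut arc) along the distinguished component --- but the inversion swaps the roles of $T$ and its complement so that ``isotopy supported away from the bead'' and ``tangle isotopy rel boundary'' become literally the same thing, whereas your version must pay for the rel-$\partial$ step with a citation of Cerf. You correctly identify this as the one non-routine point, and your alternative diagrammatic route (realizing the slide of the cut point via elementary conjugation moves on $1$--$1$ tangle diagrams, as in proofs of the Markov theorem) is also sound and is probably the most elementary way to nail the claim down if one wants to avoid both the inversion trick and Cerf.
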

I learned the idea behind this proof from \citeauthor{TJFMathOverflow} \cite{TJFMathOverflow}, which he learned from John H.\ Conway.
\begin{proof}
  Consider \(T\) as being inside a \(3\)-ball \(B\) inside \(S^3\).
  After closing up \(T\) outside \(B\) we obtain a link isotopic to \(L\).
  Invert \(S^3\) across the boundary of \(B\).
  We now have a link \(\tilde L\) with a bead (the image of the ball under the inversion) on a distinguished component \(\tilde K\), and inside the bead is a single untangled strand.
  Applying the same construction to \(T'\) gives a similar link \((\tilde L', \tilde K')\).
  Because \(T\) and \(T'\) have isotopic closures \((\tilde L, \tilde K)\) and \((\tilde L', \tilde K')\) must be isotopic as well; here we are using the fact that bead lies on the same component of each.
  Furthermore we can choose this isotopy to avoid the beads, which means that after inverting again we have the required isotopy between \(T\) and \(T'\).
\end{proof}

In general the invariant \(\operatorname{F}(L, K; \rho, \mu)\) will depend on the choice of component \(K\), as for the Hopf links in \cref{sec:Example: Hopf links}.
In many circumstances this can be eliminated by using \defemph{modified dimensions} \cite{Geer2009}.
These are a family of scalars \(\operatorname{d}(\mathcal{F}(g, u, \mu))\) parametrized by the objects assigned to points.
Set
\[
  \widetilde{\operatorname{F}}(L; \rho, \mu)
  \defeq
  \operatorname{d}(\mathcal{F}(g, u, \mu))
  \sclbrak{ \mathcal{F}(D) }
\]
where \(D\) is a diagram of some cut presentation of \((L, K)\) and  \((g, u, \mu)\) is the color of the boundary segments of \(D\).
The modified dimensions can be chosen so that this quantity is independent of the component \(K\).
Usually they are determined solely by the gauge class of \((g, u, \mu)\) or simply by \(\mu\), and gauge-independence of \(\operatorname{d}\) is a necessary condition for \(\widetilde{\operatorname{F}}\) to be gauge-independent.

Modified dimensions for the category used to compute \(\qinv{}\) are known \cite{Geer2017}, and we can re-derive these results by using the Hopf link computations in \cref{sec:Example: Hopf links}.
However, they only make sense for certain values of \(\mu\).
To simplify the statement of \cref{thm:qfunc defined and properties} we do not use modified dimensions in the definition of \(\qinv{}\).
We discuss modified dimensions further in \cref{sec:Relation to the Kashaev--Akutsu-Deguchi-Ohtsuki invariant}.

\subsection{Shadow cocycles}%
\label{sec:Shadow cocycles}
We can construct simple examples of models from shadow quandle cocycles.%
\note{
  Quandle cohomology was first studied by \textcite{Carter2003}.
  Our cocycles are a variant of the shadow cocycles of \textcite{Inoue2014}.
}
Here it is convenient to take a slightly different labeling convention.
Let \(\mathsf{A}_{2} \subset \shadset \times \matset^{2}\) be the subset of colors \((u, x, y)\) for which the crossing diagram
\begin{center}
  \begin{tikzpicture}[line width=1, scale=1, xscale = 1.5] 
    \coordinate (1) at (0,1);
    \coordinate (2) at (0,0);
    \coordinate (3) at (1,0);
    \coordinate (4) at (1,1);
    \draw[->] (2) \br (4);
    \draw[white, line width=10] (1) \br (3);
    \draw[->] (1) \br (3);
    \node[left] at (1) {\(x\)};
    \node[right] at (4) {\(y\)};
    \node[above] at (0.5,1) {\(u\)};
  \end{tikzpicture}
\end{center}
is admissible.
Let \(\phi \colon \mathsf{A}_{2} \to \mathbb{C}^{\times}\) be any function.
We can define a pre-model \(\mathcal{F}_{\phi}\) valued in \(\vect\) by assigning every point \(\mathbb{C}\) or its dual.
Identifying \(\operatorname{Hom}_{\mathbb{C}}(\mathbb{C} \otimes \mathbb{C}, \mathbb{C} \otimes \mathbb{C})\) with \(\mathbb{C}\), we define
\begin{align*}
  \mathcal{F}_{\phi}
  \colon
  \begin{tikzpicture}[line width=1, scale=1, xscale = 1.5, baseline={(current bounding box.center)}] 
    \coordinate (1) at (0,1);
    \coordinate (2) at (0,0);
    \coordinate (3) at (1,0);
    \coordinate (4) at (1,1);
    \draw[->] (2) \br (4);
    \draw[white, line width=10] (1) \br (3);
    \draw[->] (1) \br (3);
    \node[left] at (1) {\(x\)};
    \node[right] at (4) {\(y\)};
    \node[above] at (0.5,1) {\(u\)};
  \end{tikzpicture}
  &\mapsto
  \phi(u, x, y)
  \\
  \mathcal{F}_{\phi}
  \colon
  \begin{tikzpicture}[line width=1, scale=1, xscale = 1.5, baseline={(current bounding box.center)}] 
    \coordinate (1) at (0,1);
    \coordinate (2) at (0,0);
    \coordinate (3) at (1,0);
    \coordinate (4) at (1,1);
    \draw[->] (1) \br (3);
    \draw[white, line width=10] (2) \br (4);
    \draw[->] (2) \br (4);
    \node[left] at (1) {\(x\)};
    \node[right] at (4) {\(y\)};
    \node[above] at (0.5,1) {\(u\)};
  \end{tikzpicture}
  &\mapsto
  \phi(u, y, x)^{-1}
\end{align*}
These values are chosen so that \(\mathcal{F}_{\phi}\) satisfies the \(\reid{2a}\) and \(\reid{2b}\) moves.
Because the pivotal structure on \(\vect\) is trivial \(\mathcal{F}_{\phi}\) will also satisfy the sideways moves \(\reid{2}[-+]\) and  \(\reid{2}[+-]\).
We say that \(\phi\) is a \defemph{generic \(2\)-cocycle} if it satisfies
\begin{gather}
  \label{eq:shadow cocycle condition}
  \phi(u, x, y \qn x)
  \phi((y \qn x) u , x, z \qn x )
  =
  \phi(xu, y, z \qn y)
  \phi( ((z \qn y) \qn x) u , x, y \qn x)
  \\
  \label{eq:shadow framing condition}
  \phi(u, x, x)
  =
  \phi(x^{-1}u, x, x)
\end{gather}

\begin{proposition}
  \label{thm:cocycles give models}
  \(\mathcal{F}_{\phi}\) is a model of \(\tcat\) if and only if \(\phi\) is a generic \(2\)-cocycle.
\end{proposition}

\begin{proof}
  The definition of \(\mathcal{F}_{\phi}\) on negative crossings ensures it takes the value \(1\) on the \(\reid{2}\) diagrams.

  It is convenient to split the \(\reid{3}\) diagram into two parts.
  One half is sent to
  \begin{align*}
    \mathcal{F}_{\phi}\colon
    &
    \begin{tikzpicture}[line width=1, scale=1, xscale = 1.5, baseline={(current bounding box.center)}, line cap = rect]
      \tikzBraidingEast{+}{(0, 2)}{(0, 1)}{(1, 1)}{(1, 2)}
      \tikzBraidingEast{+}{(1, 1)}{(1, 0)}{(2, 0)}{(2, 1)}
      \tikzBraidingEast{+}{(2, 2)}{(2, 1)}{(3, 1)}{(3, 2)}
      \draw[-] (1,2) to (2,2);
      \draw[-] (0,0) to (1,0);
      \draw[-] (2,0) to (3,0);
      \node[left] at (0,2) {\(x\)};
      \node[left] at (0,1) {\(y\)};
      \node[left] at (0,0) {\(z\)};
      \node[below] at (2,1) {\(z \qn x\)};
      \node[right] at (3,1) {\(y \qn x\)};
      \node[right] at (3,2) {\((z \qn y) \qn x\)};
      \node at (1.5, 2.5) {\(u\)};
      \node at (1.5, 1.5) {\((y \qn x)u\)};
    \end{tikzpicture}
    \\
    &\mapsto
    \phi(u, x, y \qn x)
    \phi((y \qn x) u , x, z \qn x )
    \phi( u , x, (z \qn y) \qn x ) 
  \end{align*}
  where we used the quandle relation \cref{eq:quandle distributivity} to simplify the coloring of the top-right segment.
  On the other hand,
  \begin{align*}
    \mathcal{F}_{\phi}\colon
    &
    \begin{tikzpicture}[line width=1, scale=1, xscale = 1.5, baseline={(current bounding box.center)}, line cap = rect]
      \tikzBraidingEast{+}{(0, 1)}{(0, 0)}{(1, 0)}{(1, 1)}
      \tikzBraidingEast{+}{(1, 2)}{(1, 1)}{(2, 1)}{(2, 2)}
      \tikzBraidingEast{+}{(2, 1)}{(2, 0)}{(3, 0)}{(3, 1)}
      \draw[-] (0,2) to (1,2);
      \draw[-] (2,2) to (3,2);
      \draw[-] (1,0) to (2,0);
      \node[left] at (0,2) {\(x\)};
      \node[left] at (0,1) {\(y\)};
      \node[left] at (0,0) {\(z\)};
      \node[below] at (1,1) {\(z \qn y\)};
      \node[right] at (3,1) {\(y \qn x\)};
      \node[right] at (3,2) {\((z \qn y) \qn x\)};
      \node at (1.5, 2.5) {\(u\)};
      \node at (0.5, 1.5) {\(xu\)};
      \node at (2.5, 1.5) {\(((z \qn y) \qn x)u\)};
    \end{tikzpicture}
    \\
    &\mapsto
    \phi(u, x, (z \qn y) \qn x ) 
    \phi(xu, y, z \qn y)
    \phi( ((z \qn y) \qn x) u , x, y \qn x)
  \end{align*}
  and canceling the common factor of \(\phi(u, x, (z \qn y) \qn x )\) gives \eqref{eq:shadow cocycle condition}.

  Similarly for \(\reid{1f}\) we have
  \begin{equation*}
    \mathcal{F}_{\phi}
    \colon
    \begin{tikzpicture}[line width = 1, scale = 1, xscale = 1.5, baseline={(current bounding box.center)}]
      \draw[-] (-0.5,0) to [out = 00, in = 60] (1,-1);
      \draw[-] (1,-1) to [out = -120, in = -60] (0,-1);
      \draw[white, line width=10] (0,-1) to [out = -240, in = 180] (1.5,0);
      \draw[->] (0,-1) to [out = -240, in = 180] (1.5,0);
      \node[left] at (-0.5,0) {\(x\)};
      \node[right] at (1.5,0) {\(x\)};
      \node[above] at (0.5, 0) {\(u\)};
    \end{tikzpicture}
    \mapsto
    \phi(u, x,x)
  \end{equation*}
  and
  \begin{equation*}
    \mathcal{F}_{\phi}
    \colon
    \begin{tikzpicture}[line width = 1, scale = 1, xscale = 1.5, baseline={(current bounding box.center)}]
      \draw[-] (-0.5,0) to [out = 00, in = -60] (1,1);
      \draw[-] (1,1) to [out = 120, in = 60] (0,1);
      \draw[white, line width=10] (0,1) to [out = 240, in = 180] (1.5,0);
      \draw[->] (0,1) to [out = 240, in = 180] (1.5,0);
      \node[left] at (-0.5,0) {\(x\)};
      \node[right] at (1.5,0) {\(x\)};
      \node[above] at (0.5, 1.5) {\(u\)};
      \node at (0.5, 0.75) {\(x^{-1}u\)};
    \end{tikzpicture}
    \mapsto
    \phi(x^{-1}u, x,x)
  \end{equation*}
  because the shadow color inside the loop is \(x^{-1}u\).
\end{proof}

\begin{example}
  It is easy to see that
  \(
    M(u, g_{1}, g_{4}) = m_{1} m_{2}
  \)
  is a generic \(2\)-cocycle.
\end{example}

The squared longitude eigenvalue is not a generic cocycle because it depends on a choice of component, but if we take the product over all components we get a cocycle:

\begin{example}
  \[
    L(u, g_{1}, g_{4})
    =
    \frac{
      a_{\lE}
      }{
      a_{\lW}
    }
    \frac{
      b_{2}  b_{3}
      }{
      b_{1} b_{4} 
    }
    =
    \frac{
      \det(g_{4} u, \evec{2})
      }{
      \det(g_{1} u, \evec{2})
    }
    \frac{
      (v_{1} u) (v_{2} u)
      }{
      (v_{2} g_{1} u) (v_{1} g_{4} u)
    }
  \]
  is a generic \(2\)-cocyle by \cref{thm:longitude eigenvalues make sense}.
\end{example}

\section{The Chern-Simons invariant \texorpdfstring{\(\csfunc{}\)}{Iψ} of a tangle}
\label{sec:The Chern-Simons invariant of a tangle}

\textcite{Inoue2014} showed how to use shadow colorings and quandles to compute the complex Chern-Simons invariant of a link directly from its diagram.
Their method only works for links with boundary-parabolic representations \(\rho\).
Here we explain how to extend their method to tangles with general \(\slg\) representation.
We use this result both to motivate our interpretation of \(\qfunc{}\) as a quantization of \(\csfunc{}\) and as part of the proof that it satisfies the \(\reid{3}\) move.

\subsection{Points}
\label{sec:The Chern-Simons invariant of a tangle:points}
For \(a, b \in \mathbb{C}^{\times}\) let \(\qpf[]{a,b}\) be the vector space of functions on the set of logarithms of \(b\)
\[
  f \colon \set{\beta \in \mathbb{C} \given e^{\tu \beta} = b} \to \mathbb{C} 
\]
that are quasi-periodic with period \(1\) and constant \(a\):
\[
  f(\beta + 1) = a f(\beta).
\]
This vector space is obviously \(1\)-dimensional, and the family of vector spaces \((a, b) \mapsto \qpf[]{a,b}\) is a line bundle over  \(\mathbb{C}^{\times}\times \mathbb{C}^{\times}\) in a natural way.
We write \(\ket{a, \beta}\) for the element of \(\qpf[]{a,b}\) sending \(\beta\) to \(1\).
Usually \(a\) will be clear and we simply write \(\ket{a, \beta} = \ket{\beta}\).
Any  \(\ket{\beta}\) is a basis of the complex line \(\qpf[]{a, b}\), and the basis vectors are related by
\[
  \ket{\beta + 1} = a^{-1} \ket{\beta}.
\]
We write \(\bra{\beta}\) for the dual basis of \(\qpf[]{a,b}^{*}\), which has
\[
  \bra{\beta + 1} = a \bra{\beta}
  \text{ and }
  \braket{\beta'}{\beta}
  =
  a^{\beta' - \beta}
\]

\begin{definition}
  \label{def:csfunc value point}
  For a positively oriented point we set
  \[
    \csfunc{
      \begin{tikzpicture}[baseline=(g.base)]
        \draw (0,0.5) node {\(u\)};
        \draw[->] (0,0) node(g)[left] {\((g, [v])\)} to (0.1,0);
        \draw (0,-0.5) node {\(gu\)};
      \end{tikzpicture}
    }
    =
    \qpf[]{
      \frac{
        \det(u, \evec{2})
        }{
        \det( gu, \evec{2})
      }
      ,
      -
      \frac{
        v \evec{2}
        }{
        v u
      }
    }
    =
    \qpf[]{
      \frac{
        a^{\up}
        }{
        a^{\dn}
      }
      ,
      b
    }
  \]
  independent of the log-meridian \(\mu\).
  Here the \(a^{\up}, a^{\dn}\) are the region parameters of the top and bottom regions and \(b\) is the segment parameter.
\end{definition}

In more abstract language we have defined a line bundle \(\csfunc{\point} \to \admvar{\point}\) over the space \(\admvar{\point}\) of admissible shadow colorings of a point.
Our tangle invariants will take values in spaces of linear maps between these points.

We use the standard pivotal structure on \(\vect\) of \cref{ex:pivotal structure on vect}.
Thus the value of \(\csfunc{}\) on a negatively oriented point is the dual space
\[
  \csfunc{
    \begin{tikzpicture}[baseline=(g.base)]
      \draw (0,0.5) node {\(u\)};
      \draw[<-] (0,0) node(g)[left] {\((g, [v])\)} to (0.1,0);
      \draw (0,-0.5) node {\(g^{-1}u\)};
    \end{tikzpicture}
  }
  =
  \qpf[]{
    \frac{
      \det(g^{-1}u, \evec{2})
    }{
      \det( u, \evec{2})
    }
    ,
    -
    \frac{
      v \evec{2}
    }{
      v g^{-1} u
    }
  }^{*}
  =
  \qpf[]{
    \frac{
      a'
    }{
      a
    }
    ,
    b
  }^{*}
\]
The region parameters are consistent with this as in \cref{ex:2-morphism cap}.

\subsection{Crossings}
The value of \(\csfunc{}\) on a crossing is a particular normalization of the Chern-Simons invariant of an ideal octahedron, which we divide it into four ideal tetrahedra.
The value on each can be computed using a special function called the dilogarithm.
We discuss our variant
\[
  \ldil{\zeta}
  =
  \frac{
    \operatorname{Li}_2(e^{\tu \zeta})
  }{\tu}
  +
  \zeta \log(1 - e^{\tu \zeta})
\]
of the dilogarithm further in \cref{sec:quantum dilogarithms}.
Here we observe that \(e^{\ldil{}}\) is a meromorphic function so we do not need to consider any further branch cuts.

Define functions of complex parameters \(\beta = (\beta_{1}, \beta_{2}, \beta_{3}, \beta_{4})\) and \(\mu = (\mu_{1}, \mu_{2})\) by
\begin{gather}
  \label{eq:octahedral function positive}
  \begin{aligned}
      \octfunc{+}{\mu}{\beta}
      &\defeq
      \ldil{\beta_{4} - \beta_{1}}
      +
      \ldil{\beta_{2} - \beta_{3} + \mu_{2} - \mu_{1}}
      \\
      &\phantom{=}
      -
      \ldil{\beta_{4} - \beta_{3} + \mu_{2}}
      -
      \ldil{\beta_{2} - \beta_{1} - \mu_{1}}
      \\
      &\phantom{=}
      -\mu_{1}(\beta_{3} - \beta_{1})
      +\mu_{2}(\beta_{4} - \beta_{2})
      + \mu_{1}\mu_{2}
  \end{aligned}
  \\
  \label{eq:octahedral function negative}
  \begin{aligned}
      \octfunc{-}{\mu}{\beta}
      &\defeq
      \ldil{\beta_{3} - \beta_{4} + \mu_{1}}
      +
      \ldil{\beta_{1} - \beta_{2} - \mu_{2}}
      \\
      &\phantom{=}
      -
      \ldil{\beta_{2} - \beta_{4}}
      -
      \ldil{\beta_{3} - \beta_{2} + \mu_{1} - \mu_{2}}
      \\
      &\phantom{=}
      +\mu_{1}(\beta_{3} - \beta_{1})
      -\mu_{2}(\beta_{4} - \beta_{2})
      - \mu_{1}\mu_{2}
  \end{aligned}
\end{gather}

\begin{definition}
  \label{def:CS braiding}
  Let \(X = (X, \chi, \mu)\) be a colored crossing of sign \(\epsilon\).
  Write \(\chi_{i} = (a_{i}, b_{i})\) for the parameters determining the domain and codomain vector spaces \(\qpf[]{}\).
  We define
  \[
    \csfunc{X, \chi, \mu}
    \colon
    \csfunc{\chi_{1}}
    \otimes
    \csfunc{\chi_{2}}
    \otimes
    \to
    \csfunc{\chi_{4}}
    \otimes
    \csfunc{\chi_{3}}
  \]
  to be the linear map
  \begin{equation}
    \label{eq:CS braiding}
    \ket{\beta_{1}} \otimes \ket{\beta_{2}} 
    \mapsto
    \exp \leftfun( \octfunc{\epsilon}{\mu}{\beta} \rightfun)
    \ket{\beta_{4}} \otimes \ket{\beta_{3}} 
  \end{equation}
\end{definition}

As discussed below this braiding is still well-defined at a pinched crossing.
At such a crossing the segment parameters are \( b_{2} = b_{1} m_{1}\), \( b_{3} = b_{1} + m_{2}\),  and \(b_{4} = b_{2} \), so the natural choice of logarithms is 
\[
  \beta_{2} = \beta_{1} + \mu_{1}, \beta_{3} = \beta_{1} + \mu_{2}, \beta_{4} = \beta_{1}
\]
and then
\[
  \octfunc{\epsilon}{\mu_{1}, \mu_{2}}{\beta_{1}, \beta_{1} + \mu_{1}, \beta_{1} + \mu_{2}, \beta_{1}}
  =
  \epsilon \mu_{1} \mu_{2}
\]

\begin{theorem}
  \label{thm:CS braiding is defined}
  \(\csfunc{X, \chi, \mu}\) is a well-defined linear map for any coloring \((\chi, \mu)\).
  It depends on the log-meridians as
  \begin{equation}
    \label{eq:csfunc crossing mu dep}
    \csfunc{X, \chi, \mu_{1} + k_{1}, \mu_{2} + k_{2}}
    =
    \ell_{1}^{2 k_{1}}(X, \chi) \ell_{2 k_{2}}^{2}(X, \chi)
    \csfunc{X, \chi, \mu_{1}, \mu_{2}}
  \end{equation}
  for the longitudes \(\ell_{i}^{2}\) of \cref{def:longitude eigenvalues}.
  These maps are holomorphic sections of a line bundle over the space \(\admvar{X}\) of admissible colorings.
\end{theorem}

\begin{proof}
  By \cref{thm:dilog is meromorphic} the matrix coefficients \(e^{\octfunc{+}{}{}}, \) and \(e^{\octfunc{-}{}{}}\) are meromorphic functions on \(\mathbb{C}^{6}\).
  Thought of as functions on \(\mathbb{C}^{6}\) they can have poles when the logarithms
  \[
    \zeta_{\lN}^{0}
    =
    \beta_{4} - \beta_{1} ,
    \quad
    \zeta_{\lW}^{0}
    =
    \beta_{2} - \beta_{1} - \mu_{1}
    ,\quad
    \zeta_{\lS}^{0} = \beta_{2} - \beta_{3} + \mu_{2} - \mu_{1}
    , \quad
    \zeta_{\lE}^{0}
    =
    \beta_{4} - \beta_{3} + \mu_{2}
  \]
  of the shape parameters (\ref{eq:shape-N}--\ref{eq:shape-E}) are integers.
  We need to check that none of these poles occur on the subspace corresponding to admissible colorings of \(X\).
  As discussed in \cref{sec:shadow colorings}, the \(\zeta_{i}^{0}\) are integers (i.e.\ the \(z_{i}^{0}\) are \(1\)) only at pinched crossings, where all four are integers simultaneously.
  It is not hard to use \cref{eq:ldil-0} to show that the poles of the four \(e^{\ldil{}}\) terms cancel so \(e^{\octfunc{\epsilon}{}{}}\) is well-defined at every admissible coloring.

  Next we need to check that the values of \(\csfunc{X}\) have the right quasi-periodicity behavior.
  This follows from checking that the expression \eqref{eq:CS braiding} transforms correctly when we change the \(\beta_{i}\) by integers, and in turn this follows from \cref{thm:segment relations}.

  For example, suppose we have a positive crossing.
  Label the segment and region parameters \(b_{i}\) and \(a_{j}\) as in \cref{fig:crossing-regions}.
  Let \(\beta = (\beta_{1}, \dots, \beta_{4})\) be a choice of logarithms of the segment parameters and \(\beta'\) be the same choice except that  \(\beta_{1}' = \beta_{1} + 1\).
  Then by \cref{eq:ldil-0} 
  \begin{align*}
    e^{
      \octfunc{+}{\mu}{\beta'}
    }
    &=
    e^{
      \octfunc{+}{\mu}{\beta}
    }
    m_{1}
    \frac{
      1 - b_{2}/b_{1}m_{1}
      }{
      1 - b_{4}/b_{2}
    }
  \end{align*}
  while
  \[
    \ket{\beta_{1} + 1} = a_{\lW}/a_{\lN} \ket{\beta_{1}}.
  \]
  The required equation 
  \[
    \frac{a_{\lW}}{a_{\lN}}
    =
    m_{1}
    \frac{
      1 - b_{2}/b_{1}m_{1}
    }{
      1 - b_{4}/b_{2}
    }
  \]
  follows from \cref{thm:segment relations}.
  The other cases are similar.

  The dependence on the log-meridians follows from similar computations.
  For example, at a positive crossing replacing \(\mu_{1}\) with \(\mu_{1} + 1\) gives a factor of
  \[
    \frac{
      1 - z_{\lW}^{0}
      }{
      1 - z_{\lS}^{0}
    }
    m_{2}
    \frac{
      b_{1}
      }{
      b_{3}
    }
    =
    \frac{
      a_{\lS}
      }{
      a_{\lW}
    }
    \frac{
      b_{1}
      }{
      b_{3}
    }
  \]
  which by \cref{ex:longitude} is the value of \(\ell_{1}^{2}\) on a positive crossing.

  Finally, we explain why the braidings are sections of a line bundle.
  For any coloring \(\chi\) of \(X\) there is a vector space of linear maps
  \[
    \csfunc{\chi_{1}}
    \otimes
    \csfunc{\chi_{2}}
    \otimes
    \to
    \csfunc{\chi_{4}}
    \otimes
    \csfunc{\chi_{3}}
  \]
  depending on a choice of log-meridians that transform as in \eqref{eq:csfunc crossing mu dep}.
  Together these give a line bundle over \(\admvar{X}\) and \(\chi \mapsto \csfunc{X, \chi}\) is a section of this line bundle.
  Because the \(\exp \octfunc{\pm}{\mu}{\beta} \) are holomorphic functions of the parameters \(\mu, \beta\) it is a holomorphic section.
\end{proof}

As before we can think of think of the braiding as a section of a line bundle \(\csfunc{X} \to \admvar{X}\) over the space of admissible shadow colorings of a crossing.
The segment log-parameters \(\beta_{i}\) are ``local coordinates'' for the endpoints of then tangle, while the meridian-log parameters \(\mu_{ij}\) correspond to the boundary of a regular neighborhood of \(X\).

\subsection{Recovering the standard Chern-Simons invariant}
\label{sec:recovering Neumann}

Before we prove that \(\csfunc{}\) is a tangle invariant we explain why it recovers the Chern-Simons invariant.
Let \(\tau\) be an ideal triangulation of a cusped \(3\)-manifold \(M\).
Choosing shape parameters \(z_{i}^{j}\) indexed by the tetrahedra \(i\) of \(\tau\) satisfying Thurston's gluing equations determines a hyperbolic structure on \(M\).
\textcite{Neumann2004} explains how to compute the Chern-Simons invariant of \(M\) using the additional data of a \defemph{flattening}, which is a choice of logarithms of the \(z_{i}^{0}\) and \(z_{i}^{1}\) satisfying a logarithmic form of the gluing equations.
The usual gluing equations say the product of shapes around each edge class is \(1\), so the logarithmic gluing equations require that the sum of the logarithms is \(0\).
In our normalization the Chern-Simons invariant of a tetrahedron with flattening \(\zeta^{0}, \zeta^{1}\) is
\begin{equation}
  \label{eq:ldil Rogers}
  S(\zeta^{0} ,\zeta^{1})
  \defeq
  \ldil{\zeta^{0}} + \pi \ii \zeta^{0} \zeta^{1}
\end{equation}
not simply \(\ldil{\zeta^{0}}\).
The extra term \(\pi \ii \zeta^{0} \zeta^{1}\) has some factors that cancel out over the triangulation (as in the proof of \cref{thm:CS is an invariant}) and some that combine with \(\psi\) to give the quadratic terms in the octahedral functions \(\octfunc{\pm}{}{}\) of \cref{eq:octahedral function positive,eq:octahedral function negative}.

Let \(D = (D,\rho,u)\) be a shadow-colored diagram of a \emph{link} \(L\).
Suppose that no crossing of \(D\) is pinched.
Then if \(\tau_{D}\) is the associated octahedral decomposition of \(\extr{L}\) \cref{thm:shape parameters} says that the shapes (\ref{eq:shape-N}--\ref{eq:shape-E}) determined by \(\rho\) and \(u\) give a solution of Thurston's gluing equations for \(\tau_{D}\).
Choose log-meridians \(\mu_{i}\) for \(D\).
Choose logarithms \(\alpha_{j}\) of the region parameters of \(D\) and \(\kappa_{X}\) of the parameters \(K_{X}\) occurring at each crossing \(X\).
We can use these to determine log-parameters
\begin{align}
  \label{eq:flattening-N}
  \zeta_{\lN}^0
  &=
  \epsilon (\beta_{2'} - \beta_1)
  &
  \zeta_{\lN}^1
  &=
  ( \kappa_{X} - \alpha_N)
  \\
  \label{eq:flattening-W}
  \zeta_{\lW}^0
  &=
  \epsilon(\beta_{2} - \beta_{1} - \mu_1 )
  &
  \zeta_{\lW}^1
  &=
  ( \kappa_{X} - \alpha_{\lW} + \epsilon \mu_1)
  \\
  \label{eq:flattening-S}
  \zeta_{\lS}^0
  &=
  \epsilon(\beta_{2} - \beta_{1'} + \mu_2 - \mu_1 )
  &
  \zeta_{\lS}^1
  &=
  (\kappa_{X} - \alpha_{\lS} + \epsilon (\mu_1 - \mu_2))
  \\
  \label{eq:flattening-E}
  \zeta_{\lE}^0
  &=
  \epsilon(\beta_{2'} - \beta_{1'} + \mu_2 )
  &
  \zeta_{\lE}^1
  &=
  ( \kappa_{X} - \alpha_{\lE} - \epsilon \mu_2)
\end{align}
for each tetrahedron at each crossing \(X\), hence for all of \(\tau_{D}\).
It is obvious that \(e^{\tu \zeta_{i}^{j}} = z_{i}^{j}\) for the shape parameters (\ref{eq:shape-N}--\ref{eq:shape-E}).

\begin{lemma}
  \label{thm:flattening}
  The logarithms of the shape parameters defined above give a flattening of \(\tau_{D}\).
\end{lemma}

\begin{proof}
  The parameters \(a_{i}, b_{j}, m_{k}, K_{X}\) are a version of the deformed Ptolemy coordinates \cite{Garoufalidis2015,Zickert2016} of \(\tau_{D}\), so taking logarithms as above will automatically give a flattening.
  One can see this directly by using the same arguments as in \cite[Theorem 4.2]{McphailSnyder2022hyperbolicstructureslinkcomplements}.
\end{proof}

\begin{proposition}
  \label{thm:dilogarithm sums agree}
  The dilogarithm sum \(S(\tau_{D})\) associated to the flattened ideal triangulation \(\tau_{D}\) agrees with \(\csfunc{}\):
  \[
    \csfunc{D, \mu}
    =
    \exp \leftfun( S(\tau_{D}) + \psi \rightfun(D))
  \]
  where \(\psi\) is the correction term discussed in \cref{sec:intro Chern-Simons}.
\end{proposition}

\begin{proof}
  Given our flattening the ideal octahedron at a crossing of sign \(\epsilon\) has Chern-Simons invariant
  \[
    \epsilon
    \left[
      S(\zeta^{0}_{\lN}, \zeta_{\lN}^{1})
      +
      S(\zeta^{0}_{\lS}, \zeta_{\lS}^{1})
      -
      S(\zeta^{0}_{\lW}, \zeta_{\lW}^{1})
      -
      S(\zeta^{0}_{\lE}, \zeta_{\lE}^{1})
    \right]
  \]
  As discussed in the introduction we want to adjust by the correction term \(\psi\).
  Using the formula for \(\ell^{2}\) in \cref{sec:Shadow cocycles} we see that the log-longitudes for the parts of the link at the crossing are
  \begin{align*}
    \lambda_{1} 
    &=
    \frac{
      \epsilon (\beta_{3} - \beta_{1}) + \alpha_{\lS} - \alpha_{\lW}
      }{
      2  
    }
    \\
    \lambda_{2} 
    &=
    \frac{
      \epsilon (\beta_{2} - \beta_{4}) + \alpha_{\lE} - \alpha_{\lS}
      }{
      2  
    }
  \end{align*}
  Thus, after adding \(\psi \defeq \tu (\mu_{1} \lambda_{1} + \mu_{2} \lambda_{2})\) we can now compute that
  \begin{align*}
    &\epsilon \left[
      S(\zeta^{0}_{\lN}, \zeta_{\lN}^{1})
      +
      S(\zeta^{0}_{\lS}, \zeta_{\lS}^{1})
      -
      S(\zeta^{0}_{\lW}, \zeta_{\lW}^{1})
      -
      S(\zeta^{0}_{\lE}, \zeta_{\lE}^{1})
    \right]
    +
    \psi
    \\
    &=
    \epsilon \left[
      \ldil{\zeta^{0}_{\lN}}
      +
      \ldil{\zeta^{0}_{\lS}}
      -
      \ldil{\zeta^{0}_{\lW}}
      -
      \ldil{\zeta^{0}_{\lE}}
    \right]
    +
    \epsilon \pi \ii \left[
      \zeta_{\lN}^{0} \zeta_{\lN}^{1}
      +
      \zeta_{\lS}^{0} \zeta_{\lS}^{1}
      -
      \zeta_{\lW}^{0} \zeta_{\lW}^{1}
      -
      \zeta_{\lE}^{0} \zeta_{\lE}^{1}
    \right] + \psi
    \\
    &=
    \octfunc{\epsilon}{\mu}{\beta}
    +
    \epsilon \pi \ii
    \left[
      \alpha_{\lN} \zeta_{\lN}^{0}
      +
      \alpha_{\lS} \zeta_{\lS}^{0}
      -
      \alpha_{\lW} \zeta_{\lW}^{0}
      -
      \alpha_{\lE} \zeta_{\lE}^{0}
    \right]
    +
    \pi \ii \left[
      \mu_{1}(\alpha_{\lS}  - \alpha_{\lW})
      +
      \mu_{2}(\alpha_{\lE}  - \alpha_{\lS})
    \right]
  \end{align*}
  When summing this over every crossing in the diagram we get a logarithm of \(\csfunc{D, \mu}\) plus some extra terms: we explain why these cancel.

  The terms
  \(
      \mu_{1}(\alpha_{\lS}  - \alpha_{\lW})
  \)
  and
  \(
      \mu_{2}(\alpha_{\lE}  - \alpha_{\lS})
  \)
  cancel for the same reason the region parameters drop out of the product in \cref{thm:longitude eigenvalues make sense}.

  If we group the terms \(\pm \alpha_{} \zeta_{}^{0}\) by region each region \(j\) of \(D\) corresponds to a term of the form
  \(
    \alpha_{j} \sum_{k} \epsilon_{k} \zeta_{k}^{0}
  \)
  where \(k\) ranges over the corners of \(j\) and the \(\epsilon_{k}\) are signs depending on the orientations of the edges at the corner.
  As in \cite[Theorem 4.2]{McphailSnyder2022hyperbolicstructureslinkcomplements} one can work out that
  \(
    \sum_{k} \epsilon_{k} \zeta_{k}^{0} = 0
  \)
  is the gluing equation  for the edge class associated to the region \(j\).
  It holds by \cref{thm:flattening} and the coefficient of \(\alpha_{j}\) vanishes as required.
\end{proof}

\begin{example}
  \label{example:CS boundary-parabolic}
  Suppose \(\rho \in \repvar{L}\) is a boundary-parabolic representation of a link \(L\).
  In this case there is a unique decoration and the meridian and longitude eigenvalues are both \(\pm 1\).
  Thus \(\ell_{i}^{2} = 1\) for every component of \(L\), so by \cref{thm:CS is an invariant} below \(\csfunc{L, \rho, \mu}\) is independent of the choice of \(\mu\).
  The correction term \(e^{\psi}\) is a power of \(\ii\), and the complex number \(\tu \log \csfunc{L, \rho} \in \mathbb{C}/ \pi^{2} \mathbb{Z}\) is exactly the \(\pslg\) complex volume \(\operatorname{cvol}\) of \textcite{Inoue2014}.
  \(\log \csfunc{L, \rho}\) is defined modulo \(\tu\) and not \(\pi \ii/2\) because we added the correction term \(\psi\).
  This lift is related to using a \(\slg\) (not \(\pslg\)) representation and using the dilogarithm \eqref{eq:ldil Rogers} corresponding to an extension \cite{zbMATH05220947} of the Bloch group used by \textcite{Neumann2004}.
\end{example}

\begin{example}
  \label{example:CS Dehn filling}
  Suppose Dehn surgery along the framed, oriented link \(L\) gives a \(3\)-manifold \(M\).
  If \(\rho\) maps the distinguished longitudes \(\lon_i\) of \(L\) to the identity matrix it induces a representation \(\rho : \pi_{1}(M) \to \slg\).
  Again this means that \(\ell_{i}^{2} = 1\) for every component of \(L\) and \(\csfunc{L, \rho, \mu}\) does not depend on \(\mu\).
  Now \(2 \pi \log \csfunc{L, \rho}\) is the complex Chern-Simons invariant of \((M, \rho)\) as in \cite[Theorem 14.5]{Neumann2004}.
  Note that even though \(\ell_{i}^{2} = 1\) the filling correction \(\psi\) is not necessarily \(0\) as the log-longitudes induced by a choice of logarithms \(\beta_{j}\) of the segment parameters may be nonzero integers.
\end{example}

When \(\rho\) is a general representation of \(\extr{L}\) that may not admit a Dehn filling we interpret \(\csfunc{L, \rho, \mu}\) as the complex Chern-Simons invariant of a flat connection \(\theta\) with holonomy \(\rho\) on \(\extr{L}\) whose behavior on \(\partial \extr{L}\) is determined by \(\mu\).
To make this interpretation precise we would need to work out precisely how the log-meridians determine boundary conditions for flat connections and do some integrals.
This should be possible by extending the methods of \textcite{Kirk1993} and \textcite{Marche2012}.
For links some unpublished work of Goerner and Zickert defines a similar invariant of link exteriors [Zickert, personal communication, 2021].

\subsection{\texorpdfstring{\(\csfunc{}\)}{Iψ} is a tangle invariant}

\begin{theorem}
  \label{thm:CS is an invariant}
  Let \(D\) be a tangle diagram and \((\rho, u)\) an admissible shadow coloring of \(D\).
  Then for any choice of log-meridians \(\mu\)
  \[
    \csfunc{D, \rho, u, \mu}
  \]
  is an isotopy invariant of \(D\), so we can write it as a function of the underling colored tangle \((T, \rho, u, \mu)\).
  It is gauge-invariant: if \((\rho', u')\) is obtained from \((\rho, u)\) by a gauge transformation, then
  \(
    \csfunc{T, \rho', u', \mu}
    =
    \csfunc{T, \rho, u, \mu}
    .
  \)
  It depends on choice of the log-meridians as
  \begin{equation}
    \label{eq:CS log meridian dep}
    \csfunc{T, \rho, u, \mu'}
    =
    \csfunc{T, \rho, u, \mu}.
    \prod_{i} \ell_{i}^{2 \Delta \mu_{i}}(T, \rho, u)
  \end{equation}
  where \(\Delta \mu_{i}= \mu_{i}' - \mu_{i}\).
\end{theorem}

\begin{proof}
  The claim about log-meridian dependence follows from \cref{thm:CS braiding is defined} and the definition of \(\ell_{i}^{2}\).
  By \cref{thm:models give invariants} the other claims follow from showing that \(\csfunc{}\) is a model of \(\tcat\).
  It is obviously regular and simple, so we need to prove it sends the Reidemeister move diagrams to identity maps.
  It is elementary to check this for the \(\reid{1f}\), \(\reid{2}\), and \(\reid{2}[+-]\) diagrams.

  The \(\reid{3}\) diagram is nontrivial and follows from the five-term relation for the lifted dilogarithm.
  For any shadow coloring \((\rho, u)\) of \(\reid{3}\) we have \(\ell_{i}^{2} = 1\) for every component, so \(\csfunc{}\) is independent of the choice of log-meridian.
  Suppose that \((\rho, u)\) is not pinched.
  Using the same reasoning as in the proof of \cref{thm:dilogarithm sums agree} we conclude that \(\log \csfunc{\reid{3}, \rho, u}\) is equal to the sum of dilogarithms \cref{eq:ldil Rogers} associated to the octahedral decomposition \(\tau_{\reid{3}}\).
  Because the underlying tangle of the \(\reid{3}\) diagram is isotopic to three unlinked strands there is a series of \(3\)-\(2\) moves (which the lifted dilogarithm \eqref{eq:ldil Rogers} is invariant under) that cancel all of these.
  We conclude that \(\csfunc{\reid{3}, \rho, u} = 1\) for every non-pinched shadow coloring \((\rho, u)\) of \(\reid{3}\).

  As discussed above \(\csfunc{\reid{3}, \rho, u, \mu}\) is independent of the choice of \(\mu\).
  Its value at each \((\rho, u)\) is an endomorphism of a one-dimensional vector space, so we can canonically identify it with a scalar and thus identify \((\rho, u) \mapsto \csfunc{\reid{3}, \rho, u}\) with a holomorphic function on \(\admvar{\reid{3}}\).
  By \cref{thm:reid3 is connected} this space is path-connected.
  Furthermore  \(\csfunc{\reid{3}}\) is constant and equal to \(1\) on the full-dimensional subset of non-pinched shadow colorings.
  By analytic continuation we conclude \((\rho, u) \mapsto \csfunc{\reid{3}, \rho, u}\) is the constant function \(1\), which is the same as saying \(\csfunc{\reid{3}}\) is the identity map for every coloring of  \(\reid{3}\).
\end{proof}

\section{The quantum invariant \texorpdfstring{\(\qfunc{}\)}{𝒵ψ}}
\label{sec:construction of invariant}

In this section we define \(\qfunc{}\) and \(\qinv{}\) using the representation theory of the quantum group \(\qgrplong\).
We use the relationship between the braiding of \(\qgrplong\), our presentation of it in terms of a Weyl algebra, and the octahedral coordinates (i.e.\ the segment and region parameters) in an essential way.
These are discussed in greater detail in \cite{McphailSnyder2022hyperbolicstructureslinkcomplements,McphailSnyder2024octahedralcoordinateswirtingerpresentation,McPhailSnyderAlgebra}.
Here we emphasize the parts of the story needed to understand the definition of \(\qfunc{}\) and its relationship with Chern-Simons theory.

We begin by defining the quantum group \(\qgrplong\) and an embedding of a Weyl algebra \(\phi : \weyl \to \qgrplong\).
All the \(\qgrplong\)-modules used in this paper are \(\weyl\)-modules with the \(\qgrplong\) action defined by \(X \cdot v \defeq \phi(X) \cdot v\).
At \(q = \omega\) a \(\nr\)th root of unity \(\weyl\) has a natural family \((a, b, \mu) \mapsto \qpf{a, b, \mu}\) of \(\nr\)-dimensional modules with action given by \(\omega\)-difference operators.
The elements of \(\qpf{}\) are quasi-periodic functions.

We define the value of \(\qfunc{}\) on a point to be one of the modules \(\qpf{a,b, \mu}\) with parameters determined by the coloring.
Braidings between these modules were previously defined by the author and Reshetikhin \cite{McPhailSnyderAlgebra}.
The braidings admit a simpler description in terms of the modules \(\qpf{a,b, \mu}\) because quasi-periodic functions naturally capture the algebraic behavior of quantum dilogarithms.
This perspective also shows how to recover (the inverse of) \(\csfunc{}\) by setting \(\nr = 1\).
In this section we focus on the braidings at geometrically nondegenerate crossings: the \defemph{pinched} case is described in \cref{sec:The pinched limit}.

The values of \(\qfunc{}\) on points and crossings and the pivotal structure of \(\qcat\) show how to associate  a morphism \(\qgrplong\)-modules to any admissible colored tangle diagram.
To show that this is an invariant we need to prove \(\qfunc{}\) respects colored Reiemeister moves (i.e.\ is a \defemph{model} of \(\tcat\)).
We delay the proof to \cref{sec:proofs}.
In the remainder of this section we discuss the properties of the tangle invariants \(\qfunc{}\) and their associated link invariants  \(\qinv{}\).
\Cref{sec:State-sum description} works out some concrete examples.

\subsection{The quantum group and the Weyl algebra}
\label{sec:algebras}

\begin{definition}
  \defemph{Quantum \(\mathfrak{sl}_{2}\)} is the algebra \(\qgrp[q] = \qgrplong[q]\) over \(\mathbb{C}[q^{1/2}, q^{-1/2}]\) with generators \(K, K^{-1}, E, F\) subject to the relations%
  \note{%
    Our normalization is different from the standard one but is equivalent over a ring where \((q^{1/2} - q^{-1/2})\) is invertible.
    It could also be called the \defemph{quantized function algebra} of \(\slg\) because in this normalization \(q = 1\) recovers the algebra of functions on the Poisson dual group \(\slg^{*}\).
    In the standard normalization \(q = 1\) recovers the enveloping algebra \(\mathcal{U}(\sla)\).
  }
  \begin{align*}
    KE &= q EK \\
    KF &= q FK \\
    [E,F] &= (q^{1/2} - q^{-1/2}) (K - K^{-1}).
  \end{align*}
  It is a Hopf algebra, with the coproduct
  \[
    \Delta(K)=K\otimes K, \ \ \Delta(E)=E\otimes K+1\otimes E, \ \ \Delta(F)=F\otimes 1+ K^{-1}\otimes F
\]
  and antipode
  \[
    S(K) = K^{-1}, \, \, S(E) = - EK^{-1}, \, \, S(F) = -KF.
  \]
  The center of \(\qgrp[q]\) is freely generated by the Casimir element
  \[
    \Omega = EF + q^{-1/2} K + q^{1/2} K^{-1}.
  \]
\end{definition}

Set \(q = \omega = e^{2 \pi i /\nr}\).
Unlike for generic \(q\) there is now a large central Hopf subalgebra \(Z_{0}\) generated by \(K^{\nr}, E^{\nr}, F^{\nr}\) \cite{de1991representations}.
The algebra \(Z_{0}\) is (the algebra of functions on) an algebraic group \(\slg^{*}\) (the \defemph{Poisson dual group}) closely related to \(\slg\).
Representations of \(\qgrp\) are naturally graded by \(\operatorname{Spec} Z_{0}\), which leads to our grading by decorated matrices and shadow colorings.
The full center is \(Z_{0}[\Omega]\) modulo a single relation given by a Chebyshev polynomial \cite[eq.\ 27]{Blanchet2018}.

\begin{definition}
  \label{def:weight modules}
  A \defemph{weight module} is a finite-dimensional \(\qgrp\)-module on which the center acts diagonalizably.
  We denote the category of weight modules by \(\qcat\).
  It is a monoidal category, and we make it a pivotal category by choosing the pivot \(\varpi = K^{1- \nr}\).
\end{definition}

\begin{remark}
  \label{rem:pivot}
  The standard choice of pivot for  \(\qgrp[q]\) is \(\varpi = K\).
  It is known \cite[Appendix A.2]{zbMATH01735009} that to obtain a pivotal structure on \(\qgrp\) admitting a ribbon structure (in particular, respecting the \(\reid{1fr}\) move) one needs to choose \(\varpi = K^{1 - \nr}\).
  One explanation for this phenomenon is that our choice recovers the standard pivotal structure on \(\vect\) when \(\nr = 1\), just as \(\qfunc{}\) recovers (the inverse of) \(\csfunc{}\) when \(\nr = 1\) (\cref{sec:recovering the chern-simons invariant}).
\end{remark}

\begin{definition}
  The \defemph{extended Weyl algebra} \(\weyl[q]\) is the algebra over \(\mathbb{C}[q^{1/2}, q^{-1/2}]\) with invertible generators \(x,y, z\) subject to the relations
  \begin{align*}
    xy &= q yx
       &
    xz &= zx
       &
    yz &= zy
  \end{align*}
  We abbreviate \(\weyl = \weyl[\omega]\).
\end{definition}

\begin{lemma}
  \label{thm:phi is defined}
  There is an algebra homomorphism $\phi : \qgrp[q] \to \weyl[q]$ acting on generators as 
  \begin{align*}
    \phi(K) &=x,
            &
    \phi(E) &= q^{1/2}y(z-x),
            &
    \phi(F)&=y^{-1}(1-z^{-1}x^{-1})
  \end{align*}
  so that the Casimir is sent to
  \[
    \phi(\Omega) = q^{1/2} z + q^{-1/2} z^{-1}.
    \qedhere
  \]
\end{lemma}

We think of \(\phi\) as giving a presentation of  \(\qgrp[q]\) in terms of \(\weyl[q]\).
In this paper we work almost exclusively with \(\qgrp[q]\)-modules arising as \(\weyl[q]\)-modules:
if  \(V\) is a \(\weyl[q]\)-module, then we can make it a \(\qgrp[q]\)-module by
\[
  X \cdot v \defeq \phi(X) v
\]

\subsection{Points}%
\label{sec:Modules}

Next we define the value of \(\qfunc{}\) on a point, which will be a \(\qgrp\)-module.
It is natural to describe these via the natural action of \(\weyl\) on spaces of quasi-periodic functions as in \cref{sec:The Chern-Simons invariant of a tangle}.

\begin{definition}
  \label{def:Weyl modules N}
  For \(a, b \in \mathbb{C}^{\times} \) and \(\mu \in \mathbb{C}\) let \(\qpf{a,b, \mu}\) be the vector space of functions
  \[
    f \colon \set{\beta \in \mathbb{C} \given e^{\tu \beta} = b} \to \mathbb{C} 
  \]
  on the lattice of logarithms of \(b\) that are quasi-periodic with constant \(a\) and period \(\nr\):
  \[
    f(\beta + \nr) = a f(\beta).
  \]
  It is clear that
  \begin{align*}
    (x \cdot f)(\beta) &= f(\beta + 1)
    &
    (y \cdot f)(\beta) &= \omega^{\beta} f(\beta)
    &
    (z \cdot f)(\beta) &= \omega^{\mu} f(\beta)
  \end{align*}
  makes \(\qpf{a,b,\mu}\) a \(\weyl\)-module.
  Its isomorphism class is determined by \((a, b, \omega^{\mu})\), which we can identify with the central character \(\chi : Z(\weyl) \to \mathbb{C}\) defined by 
  \begin{align*}
    \chi(x^{\nr}) &= a
    &
    \chi(y^{\nr}) &= b
    &
    \chi(z) &= \omega^{\mu}.
  \end{align*}
\end{definition}
We write \(\ket{a, \beta}\) for the function \(\ket{a, \beta}(\beta) = 1\) and \(\ket{a, \beta}(\beta') = 0\) if \(\beta \not \equiv \beta' \pmod{\nr}\).
These satisfy 
\begin{equation}
  \label{eq:ket qp relation}
  \ket{a,\beta + \nr} = a^{-1} \ket{a, \beta}
\end{equation}
as
\(
  \ket{a, \beta + \nr}(\beta)
  =
  a^{-1}
  \ket{a, \beta + \nr}(\beta + \nr)
  =
  a^{-1}.
\)
Usually \(a\) is clear and we abbreviate \(\ket{a,\beta} = \ket{\beta}\).
For any \(\beta\) with \(e^{\tu \beta} = b\) the set \(\set{ \ket{\beta}, \ket{\beta + 1}, \dots, \ket{\beta + \nr - 1} }\) is a basis of \(\qpf{a,b, \mu}\).
In this basis the action of \(\weyl\) is given by
\begin{align*}
  x \cdot \ket{\beta} &= \ket{\beta - 1}
    &
  y \cdot \ket{\beta} &= \omega^{\beta} \ket{\beta}
    &
  z \cdot \ket{\beta} &= \omega^{\mu} \ket{\beta}
    .
\end{align*}
We conclude that \(\qpf{a, b, \mu}\) is a \(\nr\)-dimensional simple \(\weyl\)-module.%
\note{
  It is not always a simple \(\qgrp\)-module: see \cref{sec:Absolute simplicity of modules}.
}
The family of vector spaces \((a, b, \omega^{\mu}) \mapsto \qpf[]{a,b,\mu}\) is a rank \(\nr\) bundle over \((\mathbb{C}^{\times})^{3}\).

When defining \(\qfunc{}\) it is most natural to use the basis \(\ket{\beta}, \dots, \ket{\beta + \nr - 1}\) of the \(\weyl\)-module \(\qpf{a,b, \mu}\).
The earlier paper \cite{McPhailSnyderAlgebra} uses a slightly different basis of these modules.
If we make the additional choice of \(\alpha\) with \(e^{\tu \alpha} = a\) then the vectors
\begin{equation}
  \label{eq:basis Weyl old}
  \widehat{v}_{n} \defeq \omega^{-n \alpha} \ket{\beta + n}
\end{equation}
are a basis of \(\qfunc{a, b, \mu}\) indexed by \(\mathbb{Z}/\nr \mathbb{Z}\) with action
\begin{align*}
  x \cdot \widehat{v}_{n}
  &=
  \omega^{\alpha} \widehat{v}_{n-1}
  &
  y \cdot \widehat{v}_{n}
  &=
  \omega^{\beta + n} \widehat{v}_{n}
  &
  z \cdot \widehat{v}_{n}
  &=
  \omega^{\mu} \widehat{v}_{n}
  .
\end{align*}
While this action is manifestly periodic (not quasi-periodic) the additional choice of \(\alpha\) turns out to be inconvenient.
One can also consider the Fourier dual vectors
\begin{equation}
  \label{eq:weight basis}
  \wb{\alpha}
  \defeq
  \sum_{k=0}^{\nr - 1} \omega^{(\beta + k) \alpha} \ket{\beta + k}
\end{equation}
which have \(\wb{\alpha + \nr} = b \wb{\alpha}\) and
\begin{align*}
  x \cdot \wb{\alpha}
  &=
  \omega^{\alpha}
  \wb{\alpha}
  &
  y \cdot \wb{\alpha}
  &=
  \wb{\alpha + 1}
  &
  z \cdot \wb{\alpha}
  &=
  \omega^{\mu} \wb{\alpha}
  .
\end{align*}

\begin{remark}
  \label{rem:cyclic modules}
  Thinking of \(\qpf{a, b, \mu}\) as a \(\qgrp\)-module \( \set{ \wb{\alpha}, \dots, \wb{\alpha - \nr + 1} }\) is a weight basis because
  \[
    K \cdot \wb{\alpha}
    =
    x \cdot \wb{\alpha}
    =
    \omega^{\alpha} \wb{\alpha}
    .
  \]
  However, since
  \[
    E \cdot \wb{\alpha}
    =
    \omega^{\mu + 1/2}(1 - \omega^{\alpha - \mu})
    \wb{\alpha + 1}
  \]
  for generic \(\chi\) \(\ker E\) will be trivial and there is no natural choice of \defemph{highest} weight.
  When \(a \ne \omega^{\pm \nr \mu}\) \(E\) and \(F\) act invertibly on the \(\qgrp\)-module \(\qpf{a, b, \mu}\) and we say it is \defemph{cyclic}.
  The Casimir acts on \(\qpf{a, b, \mu}\) by \(\omega^{\mu + 1/2} + \omega^{-\mu - 1/2}\) and the parameter \(\mu\) is the analogue of a highest weight for \(\qpf{a,b,\mu}\), although when \(\qpf{a,b,\mu}\) is cyclic it is neither a half-integer nor an eigenvalue of \(K\).
\end{remark}

\begin{definition}
  \label{def:qfunc value point}
  The functor \(\qfunc{}\) assigns a positively oriented point the module
  \[
    \qfunc{
      \begin{tikzpicture}[baseline=(g.base)]
        \draw (0,0.5) node {\(u\)};
        \draw[->] (0,0) node(g)[left] {\((g, [v], \mu)\)} to (0.1,0);
        \draw (0,-0.5) node {\(gu\)};
      \end{tikzpicture}
    }
    =
    \qpf{
      \frac{
        \det( gu, \evec{2})
        }{
        \det( u, \evec{2})
      }
      ,
      -
      \frac{
        v \evec{2}
        }{
        v u
      }
      ,
      \mu
    }
    =
    \qpf{
      \frac{
        a^{\dn}
        }{
        a^{\up}
      }
      ,
      b
      ,
      \omega^{\mu}
    }
  \]
  where \(a^{\up}, a^{\dn}\) are the region parameters of the top and bottom regions, \(b\) is the segment parameter, and \(\mu\) is the log-meridian.
  (Note the rule for region parameters differs from \cref{def:csfunc value point}.)
  We abbreviate
  \[
    \qfunc{\chi}
    = 
    \qfunc{+, \chi}
    =
    \qpf{\chi}
  \]
  to mean the value on a colored point with associated character \(\chi = (a^{\dn}/a^{\up}, b, \omega^{\mu})\) as above.
\end{definition}

The direct sum
\[
  \qfunc{g, [v]} \defeq \bigoplus_{n = 0}^{\nr -1} \qfunc{g, [v], \mu + n}
\]
does not depend on a choice of log-meridian \(\mu\) because the \(\qgrp\)-modules \(\qpf{a, b, \mu}\) depend only on \(\omega^{\mu}\).
As for the classical invariant \(\csfunc{}\) can think of \(\qfunc{\point} \to \admvar{\point}\) as a bundle of \(\qgrp\)-modules over the space of admissible shadow colorings of a point.

To define \(\qfunc{}\) on negatively-oriented points we need to describe the dual modules.
We can make \(\qpf{a,b,\mu}^{*} \defeq \operatorname{Hom}_{\mathbb{C}}(\qpf{a,b, \mu}, \mathbb{C})\) a \(\weyl\)-module by choosing an anti-automorphism \(\sigma : \weyl \to \weyl\) and defining
\[
   (w \cdot f)(v) \defeq f(\sigma(w) \cdot v) \text{ for } w \in \weyl.
\]
The natural choice is
\[
  \sigma(x) = x^{-1},
  \quad
  \sigma(y) = \omega zy,
  \quad
  \sigma(z) = \omega^{-1} z^{-1}
\]
as then \(\phi(S(X)) = \sigma(\phi(X))\) so the dual of a \(\weyl\)-module is also the dual as a \(\qgrp\)-module.
Under this action the central character of \(\qpf{a, b, \mu}^{*}\) is
\[
  \chi^{-1} \defeq (a^{-1}, b , \omega^{-\mu - 1}).
\]

As before the pivotal structure means we assign a negatively oriented point the dual module:
\[
  \qfunc{
    \begin{tikzpicture}[baseline=(g.base)]
      \draw (0,0.5) node {\(u\)};
      \draw[<-] (0,0) node(g)[left] {\((g, [v], \mu)\)} to (0.1,0);
      \draw (0,-0.5) node {\(g^{-1}u\)};
    \end{tikzpicture}
  }
  =
  \qpf{
    \frac{
      \det( u, \evec{2})
    }{
      \det(g^{-1}u, \evec{2})
    }
    ,
    -
    \frac{
      v \evec{2}
    }{
      v g^{-1} u
    }
  }^{*}
  =
  \qpf{
    \frac{
      a^{\up}
    }{
      a^{\dn}
    }
    ,
    b,
    \mu
  }^{*}
\]
Again we abbreviate this by \(\qfunc{-,\chi}\).
If we define \(\bra{\beta} \in \qpf{a,b, \mu}^{*}\) by 
\(
  \bra{\beta}( f) = f(\beta)
\)
then
\begin{equation}
  \label{eq:bra relations}
  \bra{\beta + \nr} = a \bra{\beta}
  \text{ and }
  \braket{\beta'}{\beta}
  =
  \begin{cases}
    a^{(\beta' - \beta)/\nr} & \beta' \equiv \beta \mod{\nr \mathbb{Z}}
    \\
    0 & \text{otherwise}
  \end{cases}
\end{equation}
and \(\{\bra{\beta}, \dots, \bra{\beta + \nr - 1}\}\) is a basis of \(\qpf{a, b, \mu}\).

Because the pivotal element of \(\qgrp\) is \(\varpi = K^{1 - \nr}\) the cup and cap maps are given by
\begin{align}
  \label{eq:evup}
  \evup {\chi} &: \qfunc{{-},\chi} \otimes \qfunc{{+},\chi} \to \mathbb{C} & & \bra{\beta} \otimes \ket{\beta'} \mapsto \braket{\beta}{\beta'}
  \\
  \label{eq:coevup}
  \coevup {\chi} &: \mathbb{C} \to \qfunc{{+},\chi} \otimes \qfunc{{-},\chi} & &1 \mapsto \sum_{n = 0}^{\nr -1} \ket{\beta + n} \otimes \bra{\beta + n}
  \\
  \label{eq:evdown}
  \evdown {\chi} &: \qfunc{{+},\chi} \otimes \qfunc{{-},\chi} \to \mathbb{C} & &\ket{\beta} \otimes \bra{\beta'} \mapsto \braket{\beta'}{1 - \nr + \beta}
  \\
  \label{eq:coevdown}
  \coevdown {\chi} &: \mathbb{C} \to \qfunc{{-},\chi} \otimes \qfunc{{+},\chi} & &1 \mapsto \sum_{n = 0}^{\nr -1} \bra{\beta + n} \otimes \ket{\beta + n + \nr -1}
\end{align}
where \(\chi\) is the central character determined by the diagram coloring and the signs indicate orientations.
We summarize these rules graphically in \cref{eq:cup and cap state sums}.

\subsection{Crossings}%
\label{sec:The braiding}

It is frequently said that \(\qgrp[q]\) is a quasi-triangular Hopf algebra but strictly speaking this is false.
The problem is that the universal  \(R\)-matrix \(\mathbf{R}\) is not an element of \(\qgrp[q] \otimes \qgrp[q]\), but instead of an appropriate \(\hbar\)-adic completion \(\qgrp[\hbar] \mathbin{\widehat{\otimes}} \qgrp[\hbar]\) at \(q = e^{\hbar}\).
Despite this, when \(q\) is not a root of unity the action of \(\mathbf{R}\) converges and gives \(\modcat{\qgrp[q]}\) the structure of a braided category.
The key algebraic fact is that the generators \(E, F \in \qgrp[q]\) act nilpotently on finite-dimensional modules.

When \(q = \omega\) is a root of unity this is no longer the case: there are now cyclic modules where \(E\) and \(F\) act invertibly.
The action of  \(\mathbf{R}\) on such modules may fail to converge, and indeed \(\modcat{\qgrp}\) is \emph{not} a braided category in the usual sense.
\textcite{Kashaev2004} showed to how work around this and still extract a braiding from \(\mathbf{R}\).
The conjugation action of \(\mathbf{R}\) on  \(\qgrp[\hbar]^{\otimes 2}\) gives an outer automorphism that still makes sense when specializing \(\hbar\), even if \(\mathbf{R}\) does not.
This gives an \defemph{outer \(R\)-matrix}, an algebra morphism
\[
  \mathcal{R} :
  \qgrp^{\otimes 2} 
  \to
  \qgrp^{\otimes 2}[W^{-1}]
\]
where  \(W \in \qgrp^{\otimes 2}\) is central.
The fact that \(\mathbf{R}\) intertwines the coproduct and opposite coproduct and satisfies braid relations%
\note{%
  More precisely \(\tau \mathbf{R}\) satisfies braid relations, where  \(\tau(x \otimes y) = y \otimes x\) is the flip map.
}
immediately implies the same for \(\mathcal{R}\), so we have a braiding on the algebra \(\qgrp\).

To define link invariants we want to understand the action of the braiding not on \(\qgrp\) but on \(\qgrp\)-modules.
Suppose we have an \(R\)-matrix 
\[
  R : V_{1} \otimes V_{2} \to V_{3} \otimes V_{4}
\]
intertwining \(\mathcal{R}\).%
\note{
  A braiding \(c\) will then be given by \(\tau R\).
}
Because \(\mathcal{R}\) acts nontrivially on the center of \(\qgrp^{\otimes 2 }\) the module \(V_{3}\) will in general \emph{not} be isomorphic to \(V_{1}\), nor \(V_{4}\) to \(V_{2}\).
We must specify the \(\qgrp\)-module structures on the codomain as part of the data.
This means a braiding is really a family of braidings parametrized by geometric data, or in the language of \cite{Kashaev2005,Blanchet2018,McPhailSnyderAlgebra} a \defemph{holonomy braiding}.
It requires specifying both a family of modules \(\chi_{i} \mapsto V(\chi_{i})\) \emph{and} a family of braidings between them, or in other words a bundle of \(\qgrp\)-modules over the representation variety of a point and compatible bundles of \(\qgrp\)-intertwiners over the representation variety of a crossing.
In our language we call this a model of \(\tcat\).

Next we precisely describe the intertwining property.
Let \(V_{1}, V_{2}\) be weight modules (\cref{def:weight modules}) for \(\qgrp\).
The action of the center \(Z(\qgrp) \subset \qgrp\) defines characters \(\chi_{1}, \chi_{2} : Z(\qgrp) \to \mathbb{C}\) for each.%
\note{%
  When they are \(\qgrp\)-modules arising from \(\weyl\)-modules as in \cref{sec:Modules} the \(Z(\qgrp)\)-characters come from pulling back the \(Z(\weyl)\)-characters to \(Z(\qgrp)\).
  This is discussed in detail in \cite{McPhailSnyderAlgebra}.
}
A braiding is a map
\[
  c : V_{1} \otimes V_{2} \to V_{4} \otimes V_{3}
\]
intertwining the outer \(R\)-matrix \(\mathcal{R}\) in the sense that
\begin{equation}
  \label{eq:intertwining property of braiding}
  \check{\mathcal{R}}(u) \cdot c(v) = c( u \cdot v) \text{ for }
  u \in \qgrp^{\otimes 2}, v \in V_{1} \otimes V_{2}
\end{equation}
where \(\check{\mathcal{R}} = \tau \mathcal{R}\) for \(\tau(x \otimes y) = y \otimes x\).
The image modules \(V_{3}\) and \(V_{4}\) have characters \(\chi_{3}, \chi_{4}\) defined by
\begin{equation}
  \label{eq:R action on characters}
  (\chi_{3} \otimes \chi_{4}) \mathcal{R} = \chi_{1} \otimes \chi_{2}.
\end{equation}
Because \(\mathcal{R}\) takes values in a localization \(\qgrp^{\otimes 2}[W^{-1}]\) the new characters \(\chi_{3}\) and \(\chi_{4}\) are not always well-defined, and we say \(V_{1}, V_{2}\) are \defemph{admissible} if they are.
When the \(\chi_{i}\) are determined by a shadow coloring of the crossing this condition is equivalent to admissibility of the shadow coloring.

\begin{lemma}[\protect{\cite[Theorem 3.8]{McPhailSnyderAlgebra}}]
  \label{thm:uniqueness of R}
  For any pair of admissible modules \(\qpf{\chi_{1}}, \qpf{\chi_{2}}\) the space of braidings is \(1\)-dimensional.
  More formally, if
  \[
    c, c' : 
    \qpf{\chi_{1}} \otimes \qpf{\chi_{2}}
    \to
    \qpf{\chi_{4}} \otimes \qpf{\chi_{3}}
  \]
  are two braidings satisfying \eqref{eq:intertwining property of braiding} then there is a nonzero scalar \(\Upsilon\) with \(c' = \Upsilon c\).
\end{lemma}

\begin{proof}
  Write \(V_{i} = \qpf{\chi_{i}}\).
  Because the \(V_{i}\) are simple \(\weyl\)-modules \(\check{\mathcal{R}}\) induces an isomorphism
  \[
    \operatorname{End}_{\mathbb{C}}(V_{1} \otimes V_{2})
    \to
    \operatorname{End}_{\mathbb{C}}(V_{2'} \otimes V_{1'})
  \]
  of matrix algebras.
  Any such isomorphism is inner, given by conjugation by a map \(c : V_{1} \otimes V_{2} \to V_{2'} \otimes V_{1'}\).
  Such a map is the braiding, so it is nonzero and unique up to a scalar.
\end{proof}

In \cite{McPhailSnyderAlgebra} the author and Reshetikhin used the Weyl modules \(\qpf{\chi}\) to explicitly solve \cref{eq:intertwining property of braiding} for the matrix coefficients of the braiding.
Because these coefficients were computed using the basis \eqref{eq:basis Weyl old} they depend on choices of logarithms \(\alpha, \beta\) of the parameters \(a, b\).
These can be understood as a choice of local coordinates on the vector bundle \(\chi \mapsto \qpf{\chi}\).
One must understand how the braiding depends on these choices to make sure it is well-defined.

Here we take a somewhat different approach.
By using a different description of the modules \(\qpf{\chi}\) and changing the normalization of the braiding we obtain a manifestly well-defined map.
This greatly simplifies the description of our tangle invariants: a choice of \defemph{log-coloring} as in \cite[Definition 4.2]{McPhailSnyderAlgebra} is now built into the choice of basis of the \(\qpf{\chi}\).%
\note{
  Specifically, the segment log-parameters \(\beta_{i}\) are chosen as part of the basis of the modules  \(\qpf{\chi_{i}}\).
  The region log-parameters \(\alpha_{j}\) are eliminated by choosing a different normalization of the braiding;
  this causes a trivial change to the normalization of our tangle invariants that cancels out for any link.
}

The matrix coefficients of the braiding are defined using the quantum dilogarithm \(\pf{}\) discussed in \cref{sec:quantum dilogarithms}.
The most important property is the recurrence relation
\begin{equation}
  \label{eq:pf recurrence}
  \pf{\zeta + k} =
  \frac{
    \pf{\zeta}
  }{
    (1 - \omega^{\zeta + 1})
    \cdots
    (1 - \omega^{\zeta + k})
  }
\end{equation}
which shows that \(\pf{}\) is a meromorphic function on  \(\mathbb{C}\) that satisfies the quasi-periodicity relation
\begin{equation}
  \label{eq:pf quasi periodic}
  \pf{\zeta + \nr}
  =
  \frac{
    \pf{\zeta}
    }{
    1 - \omega^{\nr \zeta}
  }
  .
\end{equation}
We abbreviate
\begin{equation}
  \pft{\zeta}
  \defeq
  \pf{\zeta + \nr - 1} \omega^{(\nr - 1)\zeta}
\end{equation}
The \defemph{braiding kernels} are the meromorphic functions of six variables \(\mu = (\mu_{1}, \mu_{2})\) and \(\beta = (\beta_{1}, \beta_{2}, \beta_{3}, \beta_{4})\)
defined by 
\begin{align}
  \label{eq:braiding kernel positive}
  \brkern{+}{\mu}{\beta}
  &\defeq
  \frac{
    1
  }{
    \nr
  }
  \frac{
    \pf{\beta_{4} - \beta_{1}}
    \pf{\beta_{2} - \beta_{3} + \mu_{2} - \mu_{1}}
  }{
    \pf{\beta_{4} - \beta_{3} + \mu_{2}}
    \pft{\beta_{2} - \beta_{1} -  \mu_{1}}
  }
  \omega^{
    \mu_{1}(\beta_{3} - \beta_{1}) 
    +
    \mu_{2}(\beta_{2} - \beta_{4})
    -
    \mu_{1} \mu_{2}
  }
  \\
  \label{eq:braiding kernel negative}
  \brkern{-}{\mu}{\beta}
  &\defeq
  \frac{
    1
  }{
    \nr
  }
  \frac{
    \pft{\beta_{3} - \beta_{4} - \mu_{2}}
    \pf{\beta_{1} - \beta_{2} +  \mu_{1}}
  }{
    \pft{\beta_{1} - \beta_{4}}
    \pft{\beta_{3} - \beta_{2} + \mu_{1} - \mu_{2}}
  }
  \omega^{
    \mu_{1}(\beta_{1} - \beta_{3})
    +
    \mu_{2}(\beta_{4} - \beta_{2})
    +
    \mu_{1} \mu_{2}
  }
\end{align}

Let \(X\) be a colored crossing of sign \(\epsilon\) oriented from left to right.
Write \(\chi_{i}\) for the central \(\weyl\)-characters associated to its segments as in \cref{sec:Modules}.
Define linear maps
\begin{equation}
  \label{eq:qfunc braiding definition}
  \begin{gathered}
    \qfunc{X}
    \colon
    \qfunc{\chi_{1}}
    \otimes
    \qfunc{\chi_{2}}
    \to
    \qfunc{\chi_{4}}
    \otimes
    \qfunc{\chi_{3}}
    \\
    \ket{\beta_{1}} \otimes \ket{\beta_{2}}
    \mapsto
    \sum_{n_{3},n_{4} = 0}^{\nr -1}
    \brkern{\epsilon}{\mu_{1}, \mu_{2}}{
      \beta_{1}, \beta_{2},
      \beta_{3} + n_{3},
      \beta_{4} + n_{4}
    }
    \ket{\beta_{4}} \otimes \ket{\beta_{3}}
  \end{gathered}
\end{equation}
It is not clear that this is well-defined at a pinched crossing because the braiding kernels could have poles.
We establish that it is and give explicit formulas for the matrix coefficients in \cref{sec:The pinched limit}.

\begin{theorem}
  \label{thm:qfunc braiding}
  \Cref{eq:qfunc braiding definition} gives a braiding: for any positive colored crossing \(X\), 
  \begin{thmenum}
    \item 
      \label{thm:qfunc braiding:defined}
      \(\qfunc{X}\) is a well-defined morphism of \(\qgrp\)-modules,
    \item  
      \label{thm:qfunc braiding:intertwines}
      it intertwines \(\check{\mathcal{R}}\) as in \cref{eq:intertwining property of braiding}, and
    \item 
      \label{thm:qfunc braiding:invertible}
      it is invertible, with inverse given by a negative crossing with the appropriate coloring.
      \qedhere
  \end{thmenum}
\end{theorem}

\begin{proof}
  \ref{thm:qfunc braiding:defined}
  The matrix coefficients \eqref{eq:braiding kernel positive} of \(\qfunc{X}\) are meromorphic functions.
  Their poles occur when the arguments of the quantum dilogarithms \(\pf{}\) are integers, hence when \(X\) is pinched.
  In \cref{sec:The pinched limit} we show that at any coloring coming from a pinched crossing the poles cancel and \(\qfunc{X}\) still makes sense.
  We also give an explicit formula for the coefficients.

  To obtain a well-defined linear map between the spaces \(\qpf{\chi_{i}}\) we need to make sure \eqref{eq:qfunc braiding definition} respects the quasi-periodicity conditions.
  This follows from quasi-periodicity relations on the braiding kernel \(\brkern{+}{}{}\).
  For example, the left-hand side of \eqref{eq:qfunc braiding definition} is independent of \(\beta_{4}\) by definition.
  For the right-hand side \(\ket{\beta_{4} + n_{4} + \nr} = a_{4}^{-1} \ket{\beta_{4} + n_{4}}\) and 
  \begin{align*}
    \brkern{+}{\mu}{\beta_{1}, \beta_{2}, \beta_{3} + n_{3}, \beta_{4} + n_{4} + \nr}
    =
    \frac{
      1 - b_{4}/b_{3} m_{2}
      }{
      1 - b_{4}/b_{1}
    }
    m_{2}^{-1}
  \end{align*}
  using \eqref{eq:pf quasi periodic}.
  By \cref{thm:segment relations}
  \[
    \frac{
      1 - b_{4}/b_{3} m_{2}
      }{
      1 - b_{4}/b_{1}
    }
    m_{2}^{-1}
    =
    \frac{
      a_{\lE}
      }{
      a_{\lN}
    }
    =
    a_{4}
  \]
  so the right-hand side is independent of \(\beta_{4}\) as well.
  The other relations of \cref{thm:segment relations} and similar computations give the relations for the remaining parameters.

  \ref{thm:qfunc braiding:intertwines}
  This is the first main result of \cite{McPhailSnyderAlgebra}.
  The central characters at the crossing are compatible with \(\check{\mathcal{R}}\) by \cref{thm:crossing parameter relations}.
  Repeating the computation in \cite[Section 4.1]{McPhailSnyderAlgebra} using the bases \(\set{\ket{\beta + n} \given n = 0, \dots, \nr - 1}\) and applying \eqref{eq:pf quasi periodic} shows that \(\qfunc{X}\) satisfies \cref{eq:intertwining property of braiding}.

  \ref{thm:qfunc braiding:invertible}
  \(\qfunc{X}\) is invertible by \cref{thm:uniqueness of R}.
  We check that the negative braiding matrix defined here is its inverse in \cref{sec:The R2 move}.
\end{proof}

One interpretation of \cref{thm:qfunc braiding}\ref{thm:qfunc braiding:defined} is that, while \(\brkern{+}{}{}\) depends on the choice of logarithms \(\beta_{i}\) (not just the segment parameters \(b_{i}\) ) this dependence is purely local and can be built into the definition of the modules assigned to points.
Understanding the dependence on the log-meridians \(\mu_{i}\) is more complicated.
Recall from \cref{sec:Modules} that we can eliminate the explicit \(\mu\) dependence of \(\qfunc{\point}\) by defining
\[
  \qfunc{g, [v]} \defeq \bigoplus_{n = 0}^{\nr -1} \qfunc{g, [v], \mu + n}
\]
so that \(\qfunc{\point}\) is a bundle of \(\qgrp\)-modules over \(\admvar{\point}\), the space of admissible shadow colorings of a (positively-oriented) point.
There is a similar vector bundle of \(\qgrp\)-morphisms over the space \(\admvar{X}\) of admissible colorings of a crossing: the fiber over a coloring is the space of morphisms between the modules assigned to the boundary points.
We can then view \(\qfunc{X}\) as as section of this bundle: choosing \(\mu_{i}\) picks out submodules and \(\qfunc{X}\) acts as defined in \eqref{eq:qfunc braiding definition}.
However, because the braiding depends nontrivially on the log-meridians one needs to define this vector bundle in terms of quasi-periodic functions of the \(\mu_{i}\) similar to the modules \(\qpf{\chi}\).
Concretely, we have

\begin{lemma}
  \label{thm:qfunc braiding mu dependence}
  Let \(X = (X, \rho, u)\) be an admissible shadow-colored crossing with log-meridians \(\mu_{1}, \mu_{2}\).
  Then for any other log-meridians \(\mu_{i}' = k_{i} \nr + \mu_{i}\)
  \[
    \qfunc{X, \mu_{1}', \mu_{2}'}
    =
    \ell^{-2 k_{1}}_{1}(X)
    \ell^{-2 k_{2}}_{2}(X)
    \qfunc{X, \mu_{1}, \mu_{2}}
  \]
  in terms of the squared longitudes of \cref{sec:Longitude eigenvalues}.
\end{lemma}

\begin{proof}
  The hypothesis that \(\mu_{i}' - \mu_{i} \in \nr \mathbb{Z}\) ensures that \( \qfunc{X, \mu_{1}', \mu_{2}'} \) and \( \qfunc{X, \mu_{1}, \mu_{2}} \) are maps between the same modules.
  The result follows from similar computations as  \cref{thm:qfunc braiding}\ref{thm:qfunc braiding:intertwines}.
  For example, at a positive crossing we have
  \begin{align*}
    \brkern{+}{\mu_{1}, \mu_{2} + \nr k_{2}}{\beta}
    =
    \brkern{+}{\mu_{1}, \mu_{2} }{\beta}
    \frac{
      1 - b_{2}/b_{1}m_{1}
      }{
      1 - b_{2}m_{2}/b_{3}m_{1}
    }
    \frac{
      b_{3}
      }{
      b_{1}
    }
    =
    \frac{
      a_{\lW}
      }{
      a_{\lS}
    }
    \frac{
      b_{3}
      }{
      b_{1}
    }
    =
    \ell^{-2}_{1}(X)
  \end{align*}
  by \cref{thm:segment relations}.
\end{proof}

\begin{remark}
  Another perspective is to introduce an action of another Weyl algebra \(\mathcal{M}\) generated by \(z, w\) with \(zw = \omega wz\).
  In terms of basis vectors \(\ket{\beta, \mu} \in \qfunc{g, [v], \mu}\) of \(\qfunc{g, [v]}\) the generators act by
  \[
    z \cdot \ket{\beta, \mu}
    =
    \omega^{\mu} \ket{\beta, \mu}
    \quad
    w \cdot \ket{\beta, \mu}
    =
    \ket{\beta, \mu + 1}
  \]
  While \(\qfunc{X}\) intertwines the \(\qgrp\)-action the \(\mu_{i}\)-dependence means that two copies of \(\mathcal{M}\) (one for each component) act nontrivally on \(\qfunc{X}\).
  It is not yet clear what algebraic structure best describes this behavior, but it might be explained by a quantization of non-normalized cluster algebras \cite{zbMATH07000309}.
\end{remark}

\subsection{Recovering the classical Chern-Simons invariant \texorpdfstring{\(\csfunc{}\)}{Iψ}}%
\label{sec:recovering the chern-simons invariant}

Since our quantization parameter is \(\omega = e^{\tu /\nr}\) setting \(\nr = 1\) should correspond to the classical case.%
\note{
  One also interprets \(\nr \to \infty\) as the semiclassical limit.
  It again recovers \(\csfunc{}\) but in a much more complicated way: this is what the Volume Conjecture and \cref{conj:our volume} are about.
}
Here we support our interpretation of \(\qfunc{}\) as a quantization of \(\csfunc{}\) by  explaining in detail why \(\qfunc[1]{}\) recovers the inverse of  \(\csfunc{}\).

By definition the underlying vector space of the \(\weyl\)-module \(\qpf[1]{a, b, \mu}\) is \(\qpf[]{a, b}\) as defined in \cref{sec:The Chern-Simons invariant of a tangle:points}.
However because of our slightly different conventions on region variables we have
\[
    \qfunc[1]{
      \begin{tikzpicture}[baseline=(g.base)]
        \draw (0,0.5) node {\(u\)};
        \draw[->] (0,0) node(g)[left] {\((g, [v], \mu)\)} to (0.1,0);
        \draw (0,-0.5) node {\(gu\)};
      \end{tikzpicture}
    }
    =
    \qpf{
      \frac{
        \det( gu, \evec{2})
        }{
        \det( u, \evec{2})
      }
      ,
      -
      \frac{
        v \evec{2}
        }{
        v u
      }
    }
    =
    \qpf[]{
      \frac{
        a^{\dn}
        }{
        a^{\up}
      }
      ,
      b
    }
\]
but
\[
    \csfunc{
      \begin{tikzpicture}[baseline=(g.base)]
        \draw (0,0.5) node {\(u\)};
        \draw[->] (0,0) node(g)[left] {\((g, [v], \mu)\)} to (0.1,0);
        \draw (0,-0.5) node {\(gu\)};
      \end{tikzpicture}
    }
    =
    \qpf{
      \frac{
        \det( u, \evec{2})
        }{
        \det( gu, \evec{2})
      }
      ,
      -
      \frac{
        v \evec{2}
        }{
        v u
      }
    }
    =
    \qpf[]{
      \frac{
        a^{\up}
        }{
        a^{\dn}
      }
      ,
      b
    }
\]
These are inverses in the sense that there is an isomorphism
\[
  \qpf[]{a, b}
  \otimes_{\mathbb{C}}
  \qpf[]{a^{-1}, b}
  \to
  \mathbb{C}, \quad
  \ket{a, \beta} \otimes \ket{a^{-1}, \beta'}
  =
  a^{\beta' - \beta}
\]
Similarly, by \cref{eq:exact-qlf-value} \(\pf[\nr]{\zeta} = e^{- \ldil{\zeta}}\), so setting \(\nr = 1\) in the braiding kernels (\ref{eq:braiding kernel positive},\ref{eq:braiding kernel negative}) recovers the inverse octahedron functions (\ref{eq:octahedral function positive}, \ref{eq:octahedral function negative}):
\begin{equation*}
  \brkern{\pm}{}{}
  =
  e^{
    -
    \octfunc{\pm}{}{}
  }
\end{equation*}
This inverse matches the change in quasi-periodicity conditions from \(\qpf[]{a,b}\) to  \(\qpf[]{a^{-1}, b}\).
We conclude that the product \(\qfunc[1]{} \boxtimes \csfunc{}\) (\cref{def:product of models}) is trivial, which is what it means for them to be inverses.

\subsection{Tangle and link invariants}%
\label{sec:Properties of qfun}

Above we showed how to interpret the colored segments and crossings of \(\tcat\) as maps of \(\qgrp\)-modules.
In other words, we have defined a pre-model \(\qfunc{}\) of \(\tcat\) in \(\qcat\), hence a \(2\)-functor (as in \cref{def:delooping})
\[
  \qfunc{} \colon \tcat \to \mathsf{B}\qcat.
\]
Our claim is that \(\qfunc{D}\) depends only on the isotopy class of the colored diagram \(D = (D, \rho, u, \mu)\).
By \cref{thm:models give invariants} this follows from showing that \(\qfunc{}\) is a model of \(\tcat\) valued in \(\qcat\).
We do this in \cref{sec:proofs}, which gives our first main result:

\begin{bigtheorem}
  \label{thm:qfunc defined and properties}
  Let \(T\) be a oriented, framed tangle, \(\rho \in \repvar{T}\) a decorated representation, and \(\mu\) a choice of log-meridians for \(\rho\).
  \begin{thmenum}
    \item
    \label{thm:qfunc defined and properties:defined}
      Let \(D, D' \in \tcat\) be any two decorated \(\slg\)-colored diagrams of \((T, \rho)\).
      Suppose that the shadow colorings of \(D\) and \(D'\) induced by coloring their topmost region by the same vector \(u\) are both admissible.
      Then
      \[
        \qfunc{D, \rho, u, \mu} = \qfunc{D', \rho', u, \mu}
      \]
      so \(\qfunc{T, \rho, u, \mu} \defeq \qfunc{D, \rho, u, \mu}\) is a colored isotopy invariant, and in particular is independent of the choice of representative diagram.
      This invariant is functorial: it respects disjoint union and composition of tangles.
    \item
      \label{thm:qfunc defined and properties:log-decoration}
      Suppose  that \(\mu, \mu'\) are two different log-meridians for \(\rho\).
      Assume that each \(k_{i} = \mu_{i}' - \mu_{i}\) is a multiple of \(\nr\).
      Then
      \begin{equation}
        \label{eq:log dec dep meridian}
        \qfunc{T, \rho, u, \mu'}=
        \qfunc{T, \rho, u, \mu}
        \prod_{i} \ell_{i}^{-2 k_{i}}(T)
        .
      \end{equation}
      for the \(\ell_{i}^{2}(T)\) of \cref{def:longitude eigenvalues}.
    \item 
      \label{thm:qfunc defined and properties:framing}
      Suppose \(T'\) is the same tangle as \(T\) but with the framing of component \(i\) increased by \(k\), with the convention that the right-hand twist of \eqref{eq:twist right hand positive} has framing \(+1\).
      Then if that component has log-meridian \(\mu_{i}\),
      \[
        \qfunc{T', \rho, u, \mu}
        =
        \omega^{\mu_{i}(\mu_{i} + 1 - \nr)}
        \qfunc{T, \rho, u, \mu}
        .
        \qedhere
      \]
  \end{thmenum}
\end{bigtheorem}

\begin{proof}
  \ref{thm:qfunc defined and properties:defined}
  By \cref{thm:models give invariants} it suffices to show \(\qfunc{}\) is a model of \(\tcat\).
  We do this in \cref{sec:proofs}.

  \ref{thm:qfunc defined and properties:log-decoration}
  This follows immediately from \cref{thm:qfunc braiding mu dependence} and the definition of  \(\ell_{i}^{2}\) in \cref{sec:Longitude eigenvalues}.

  \ref{thm:qfunc defined and properties:framing}
  This follows from the computation in  \cref{sec:The R1f move} used to show \(\reid{1f}\) invariance.
\end{proof}

Our invariant is defined not just for tangles with \(\slg\) representations \(\rho\) but requires the extra data of a decoration (a trivialization of the bundle defined by \(\rho\) near the boundary components) and log-meridians (more boundary data for a flat connection representing \(\rho\)).
Part \ref{thm:qfunc defined and properties:log-decoration} shows that \(\qfunc{}\) depends nontrivially on the log-meridians, but in a simple way.
We have a similar conjecture for the dependence on the decoration.

\begin{definition}
  Let \((T, \rho, u, \mu)\) be a colored tangle.
  Let \(x\) be a meridian of the \(i\)th component of \(T\), and suppose \(\rho(x)\) has eigenspaces \([v], [w]\), with \([v]\) the space determined by the decoration.
  The \defemph{opposite log-decoration} of the \(i\)th component has eigenspace \([w]\) and log-meridian \(\nr - 1 - \mu\).
\end{definition}

\begin{conjecture}
  \label{conj:opposite decoration}
  \(\qfunc{}\) is unchanged when taking the opposite log-decoration.
  That is, if \((T, \tilde{\rho}, u, \tilde{\mu})\) is the tangle with the opposite log-decoration on some components, then 
  \(
    \qfunc{T, \tilde{\rho}, u, \tilde{\mu}} = \qfunc{T, \rho, u, \mu}
    .
  \)
\end{conjecture}

The statement makes sense because \(\qfunc{g, [v], \mu}\) and \(\qfunc{g, [w], \nr - 1 - \mu}\) are isomorphic as \(\qgrp\)-modules, so if we change decoration on an open component both sides of the equation have the same source and target.
In \cref{sec:Example: Hopf links} we show that this conjecture holds for Hopf links.

The definition of  \(\qfunc{T}\) makes sense when \(T\) is a link, but it vanishes uniformly.
This is a consequence of an algebraic fact: the quantum dimensions%
\note{
  The trace of the identity map is called a quantum dimension, just as the trace of the identity map on an ordinary vector space is the dimension.
}
of the modules \(\qfunc{\chi}\) vanish.
Concretely, consider the diagram
\begin{center}
    \begin{tikzpicture}[line width=1, baseline=10, scale=1] 
      \draw[<-, looseness = 1.5] (0,0)  to [out =180, in=180] (0,1) ;
      \draw[->, looseness = 1.5] (0,0)  to [out =0, in=0] (0,1) ;
      \node[above] at (0,1) {\(g, [v], \mu\)};
    \end{tikzpicture}
\end{center}
Its value under \(\qfunc{}\) is the linear map \(\mathbb{C} \to \mathbb{C}\)
\[
  1
  \mapsto
  \sum_{n = 0}^{\nr -1} \ket{\beta + n} \otimes \bra{\beta + n}
  \mapsto
  \sum_{n = 0}^{\nr -1} \braket{\beta + n}{\beta + n + 1 - \nr}
  =
  0
\]
i.e.\ the scalar \(0\).
As a consequence \(\qfunc{L, \rho, u}\) will be \(0\) for any link \(L\).
We can follow the procedure in \cref{sec:Link invariants} to produce nontrivial link invariants.

\begin{bigtheorem}
  \label{thm:qinv defined and properties}
  Let \(L\) be a (framed, oriented) link, \(\rho\) a decorated representation of \(\extr{L}\), \(\mu\) a choice of log-meridians for \(\rho\), and \(K\) a component of \(L\).
  Choose a diagram \(D\) of a cut presentation of \((L, K)\) (i.e. an open tangle diagram whose closure is \(L\) and whose open component is \(K\).)
  \begin{thmenum}
    \item
    \label{thm:qinv defined and properties:defined}
      Let \(\rho'\) be any representation gauge-equivalent to \(\rho\) and \(u\) any admissible shadow coloring of \((D, \rho)\).
      Then
      \[
        \qinv{L, K ; \rho, \mu}
        \defeq
        \sclbrak{\qfunc{T, \rho', u, \mu}}
        \in \mathbb{C}
      \]
      is an isotopy invariant of \((L, K; \rho, \mu)\) that depends only on the gauge class of \(\rho\).
    \item
    \label{thm:qinv defined and properties:holomorphic}
    \(\qinv{L, K; \rho, \mu}\) depends on the log-meridians as in \eqref{eq:log dec dep meridian}.
    More abstractly, \(\qinv{L, K}\) is a section of a rank \(\nr^{c}\) holomorphic vector bundle over the space \(\repvar{L}\) of decorated representations of \(L\), where \(c\) is the number of components of \(L\).
    \qedhere
  \end{thmenum}
\end{bigtheorem}

\begin{proof}
  \ref{thm:qinv defined and properties:defined}
  In \cref{sec:Absolute simplicity of modules} we show that \(\qfunc{}\) is a simple model and then we can use \cref{thm:link invariants from model}.

  \ref{thm:qinv defined and properties:holomorphic}
  The first claim is a corollary of \cref{thm:qfunc defined and properties}\ref{thm:qfunc defined and properties:log-decoration}.
  To view \(\qinv{}\) as a section of a bundle first recall the space \(\repvarlog{L}\) of log-decorated representations.
  Fixing a diagram \(D\) to compute \(L\) the parameters \(\beta_{i}\) and  \(\mu_{j}\) are local coordinates on \(\repvarlog{L}\).
  The matrix coefficients defining \(\qinv{L, K; \rho, \mu}\) are all holomorphic functions of these so \((\rho, \mu) \mapsto \qinv{L, K; \rho, \mu}\) is a holomorphic function on \(\repvarlog{L}\).
  For each decorated representation \(\rho\) of \(L\) consider the space \(\Lambda_{\nr}(\rho)\) of functions \(f\) of the log-meridians \(\mu_{i}\) satisfying
  \[
    f(\mu_{1} + \nr k_{1}, \dots, \mu_{c} + \nr k_{c}) 
    =
    f(\mu_{1} , \dots, \mu_{c} ) 
    \prod_{i} \ell_{i}^{- 2k_{i}}(L, \rho)
  \]
  This condition is well-defined because the squared longitude \(\ell_{i}^{2}(L, \rho)\) is a link invariant by \cref{thm:longitude eigenvalues make sense}.
  Each \(\Lambda_{\nr}(\rho)\) is a \(\nr^{c}\)-dimensional vector space and they clearly define a holomorphic vector bundle over \(\repvar{L}\).
  By \cref{thm:qfunc defined and properties}\ref{thm:qfunc defined and properties:log-decoration} \(\qinv{L, K}\) is a section of this bundle and by the paragraph above it is a holomorphic section.
\end{proof}

We explain how to compute \(\qfunc{L, K; \rho, \mu}\) in practice and work out some concrete examples in \cref{sec:state-sums and examples}.
In \cref{sec:torsion} we explain why \(\qinv[2]{}\) is closely related to the Reidemeister torsion, and in \cref{sec:Relation to the Kashaev--Akutsu-Deguchi-Ohtsuki invariant} we show \(\qinv{L, K; \rho}\) recovers the link invariants of \textcite{Kashaev1995} and \textcite{Akutsu1992} when \(\rho\) has reducible (in particular, abelian) image.

\section{State-sum description of the invariant and examples}
\label{sec:state-sums and examples}

\subsection{State-sum description}%
\label{sec:State-sum description}

Here we explain how to reduce the computation of \(\qinv{L, K; \rho, \mu}\) to an explicit tensor contraction.
This description also leads to the state integral presentation studied in \cite{McPhailSnyderState}.

Recall that \(L\) is a framed oriented link and \(K\) is a component of \(L\).
Choose a diagram \(D\) of a cut presentation of \((L,K)\), i.e.\ choose a tangle diagram with one incoming and outgoing boundary strand whose closure is  \(L\) so that the open component is \(K\).
Let \(E\) be the set of internal segments of \(D\).
We can think of points of \(\mathbb{C}^{E}\) as assigning a complex variable \(\beta_{i}\) to each internal segment.
Assign the boundary segments of \(D\) the same parameter \(\beta_{0}\), which we think of as fixed.
Similarly we assign a parameter \(\mu_{i}\) to each component of \(D\).
We have not yet picked \(\rho\): it will be determined by the values of the \(\beta_{i}\) and \(\mu_{j}\).

We define a complex-valued \defemph{action functional}%
\note{
  In a more physical convention we would instead write this as \(e^{\nr \mathcal{S}_{D}}\), as in \cite{McPhailSnyderState}.
}
\(\beta \mapsto \mathcal{S}_{D}(\mu, \beta_{0}|\beta)\) by taking a product of pieces associated to elementary diagrams.
We think of the action as a family of functions of the log-segment parameters \(\beta = i \mapsto \beta_{i} \in \mathbb{C}^{E}\) parametrized by the choice of log-meridians \(\mu_{j}\).

We assign crossings the braiding kernel \(\brkern{\pm}{}{}\) of matching sign:
\begin{equation}
  \label{eq:braiding state sum}
  \begin{tikzpicture}[line width = 1, scale = 1, xscale = 1.5, baseline={(current bounding box.center)}]
    \coordinate (1) at (0,1);
    \coordinate (2) at (0,0);
    \coordinate (3) at (1,0);
    \coordinate (4) at (1,1);
    \draw[->] (2) \br (4);
    \draw[->] (1)  \br (3);
    \node[left] at (1) {\(\beta_{1}\)};
    \node[left] at (2) {\(\beta_{2}\)};
    \node[right] at (3) {\(\beta_{3}\)};
    \node[right] at (4) {\(\beta_{4}\)};
    \node[below] at (0.5,-0.5) {\(\brkern{\pm}{\mu_{1}, \mu_{2}}{\beta_{1}, \beta_{2}, \beta_{3}, \beta_{4}}\)};
  \end{tikzpicture}
\end{equation}
Cups and caps correspond to argument shifts
\begin{equation}
  \label{eq:cup and cap state sums}
  \begin{aligned}
    &
    \begin{tikzpicture}[line width=1, baseline=10, scale=1] 
      \draw[->, looseness = 1.5] (0,0) node[right] {\(\beta\)} to [out =180, in=180] (0,1) node[right] {\(\beta\)};
    \end{tikzpicture}
    &&&
    &
    \begin{tikzpicture}[line width=1, baseline=10, scale=1] 
      \draw[->, looseness = 1.5] (0,0) node[left] {\(\beta\)} to [out =0, in=0] (0,1) node[left] {\(\beta\)};
    \end{tikzpicture}
    \\
    &
    \begin{tikzpicture}[line width=1, baseline=10, scale=1] 
      \draw[<-, looseness = 1.5] (0,0) node[right] {\(\beta + 1 - \nr\)} to [out =180, in=180] (0,1) node[right] {\(\beta\)};
    \end{tikzpicture}
    &&&
    &
    \begin{tikzpicture}[line width=1, baseline=10, scale=1] 
      \draw[<-, looseness = 1.5] (0,0) node[left] {\(\beta + \nr - 1\)} to [out =0, in=0] (0,1) node[left] {\(\beta\)};
    \end{tikzpicture}
  \end{aligned}
\end{equation}
which we could also think as being delta functions.
Together these mean the rotated braidings are assigned
\begin{gather}
  \begin{tikzpicture}[line width = 1, scale = 1, yscale = 1.5, baseline={(current bounding box.center)}]
    \coordinate (1) at (0,0);
    \coordinate (2) at (1,0);
    \coordinate (3) at (1,1);
    \coordinate (4) at (0,1);
    \draw[->] (2) to [out = 90, in = 270] (4);
    \draw[->] (1)  to [out = 90, in = 270] (3);
    \node[below left] at (1) {\(\beta_{1}\)};
    \node[below right] at (2) {\(\beta_{2}\)};
    \node[above right] at (3) {\(\beta_{3}\)};
    \node[above left] at (4) {\(\beta_{4}\)};
    \node[below] at (0.5,-0.5) {\(\brkern{\pm}{\mu_{1}, \mu_{2}}{\beta_{1}, \beta_{2}, \beta_{3}, \beta_{4}}\)};
  \end{tikzpicture}
  \\
  \begin{tikzpicture}[line width = 1, scale = 1, yscale = 1.5, baseline={(current bounding box.center)}]
    \coordinate (1) at (1,1);
    \coordinate (2) at (0,1);
    \coordinate (3) at (0,0);
    \coordinate (4) at (1,0);
    \draw[->] (2) to [out = 270, in = 90] (4);
    \draw[->] (1)  to [out = 270, in = 90] (3);
    \node[above right] at (1) {\(\beta_{1}\)};
    \node[above left] at (2) {\(\beta_{2}\)};
    \node[below left] at (3) {\(\beta_{3}\)};
    \node[below right] at (4) {\(\beta_{4}\)};
    \node[below] at (0.5,-0.5) {\(\brkern{\pm}{\mu_{1}, \mu_{2}}{\beta_{1} + 1 - \nr, \beta_{2}, \beta_{3} + 1 - \nr, \beta_{4}}\)};
  \end{tikzpicture}
  \\
  \begin{tikzpicture}[line width = 1, scale = 1, xscale = 1.5, baseline={(current bounding box.center)}]
    \coordinate (1) at (1,0);
    \coordinate (2) at (1,1);
    \coordinate (3) at (0,1);
    \coordinate (4) at (0,0);
    \draw[->] (2) to[out = 180, in = 00] (4);
    \draw[->] (1) to[out = 180, in = 00] (3);
    \node[right] at (1) {\(\beta_{1}\)};
    \node[right] at (2) {\(\beta_{2}\)};
    \node[left] at (3) {\(\beta_{3}\)};
    \node[left] at (4) {\(\beta_{4}\)};
    \node[below] at (0.5,-0.5) {\(\brkern{\pm}{\mu_{1}, \mu_{2}}{\beta_{1} + 1 - \nr, \beta_{2} + 1 - \nr, \beta_{3} + 1 - \nr, \beta_{4} + 1 - \nr}\)};
  \end{tikzpicture}
\end{gather}

\begin{definition}
  Let \(L\) be a link with set of components \(C\).
  Choose a diagram \(D\) of a cut presentation of \((L;K)\) with set of internal segments  \(E\).
  The rules above assign \(D\) an \defemph{action functional}
  \[
    \mathcal{S}_{D} : \mathbb{C}^{C} \times \mathbb{C} \times \mathbb{C}^{E} \to \mathbb{C}
  \]
  of the log-meridians \(\mu_{i}, i \in C\), boundary segment log-parameter \(\beta_{0}\), and internal segment log-parameters \(\beta_{j}, j \in E\).
\end{definition}

\begin{example}
  \label{ex:figure eight}
  The diagram below is a cut presentation of the figure eight knot.
  \begin{center}
    \begin{tikzpicture}[line width=1, scale=1, xscale=1.5]
      \draw[<-] (0,1) node[above] {\(\beta_{6}\)} \br (1,0) node[below] {\(\beta_{5}\)};
      \draw[white, line width=10] (0,0) node {} \br (1,1) node {};
      \draw[->] (0,0) node[below] {\(\beta_{0}\)} \br (1,1) node[above] {\(\beta_{1}\)};

      \draw[->] (1,1) node {} \br (2,0) node[below] {\(\beta_{2}\)};
      \draw[white, line width=10] (1,0) node {} \br (2,1) node {};
      \draw[<-] (1,0) node {} \br (2,1) node[above] {\(\beta_{4}\)};

      \draw[<-] (2,1) node {} \br (3,2) node {};
      \draw[white, line width=10] (2,2) node {} \br (3,1) node {};
      \draw[->] (2,2) node[above] {\(\beta_{6}\)} \br (3,1) node[below] {\(\beta_{7}\)};

      \draw[-] (3,1) node {} \br (4,2) node {};
      \draw[white, line width=10] (3,2) node {} \br (4,1) node {};
      \draw[<-] (3,2) node[above] {\(\beta_{3}\)} \br (4,1) node[below] {\(\beta_{2}\)};

      \draw (2,0) to (4,0) ;
      \draw[looseness = 1.5] (4,0) to [out =0, in=0] (4,1); 

      \draw (0,2) to (2,2) ;
      \draw[looseness = 1.5] (0,1) to [out =180, in=180] (0,2); 
      
      \draw[->] (-1,0) to (0,0);

      \draw[->] (4,2) node[above] {\(\beta_{0}\)} to (5,2);
    \end{tikzpicture}
  \end{center}
  Assigning variables to the segments as shown the action functional is
  \begin{align*}
    \mathcal{S}_{D}(\mu, \beta_{0}|\beta)
    &=
    \brkern{+}{}{
      \beta_{0}, \beta_{5}, \beta_{1}, \beta_{6}
    }
    \brkern{+}{}{
      \beta_{4} + 1 - \nr, \beta_{1}, \beta_{5} + 1 - \nr, \beta_{2}
    }
    \\
    &\phantom{=}\times
    \brkern{-}{}{
      \beta_{3} + 1 - \nr, \beta_{6}, \beta_{4} + 1 - \nr, \beta_{7}
    }
    \brkern{-}{}{
      \beta_{7}, \beta_{2}, \beta_{0}, \beta_{3}
    }
  \end{align*}
  where \(\beta = (\beta_{1}, \dots, \beta_{7})\) and each braiding kernel has \(\mu_{1} = \mu_{2} = \mu\).
\end{example}

Now let \(\rho\) be a decorated representation of \(L\).
Choose a nonzero vector \(u \in \shadset = \mathbb{C}^{2} \setminus \set{0}\) for one of the regions of \(D\) and color the others according to \cref{def:shadow coloring}; we now have a shadow coloring of \(D\).
If the region and segment parameters of \cref{def:chi parameters} are all nonzero, then our coloring is admissible.
If not, we can always modify \(\rho\) and/or \(u\) by a gauge transformation to an admissible representation.
This does not affect the value of the invariant.

\begin{lemma}
  \label{thm:state sum}
  Fix logarithms \(\beta_{j}\) of the segment parameters assigned by \((\rho, u)\), including a common \(\beta_{0}\) for the two boundary segments of \(D\).
  A \defemph{state} is a vector \(k \in \set{0, \dots, \nr - 1}^{E}\).
  The quantum link invariant is given by a sum over all states:
  \[
    \qinv{L, K; \rho, u}
    =
    \sum_{k \in \set{0, \dots, \nr - 1}^{E}} \mathcal{S}_{D}(\mu, \beta_{0} | \beta + k)
    \qedhere
  \]
\end{lemma}
\begin{proof}
  The matrix coefficients of the linear maps assigned to crossings, cups, and caps are given by the relevant components of \(\mathcal{S}_{D}\), so taking the sum above corresponds to composing these linear maps.
  The result is an endomorphism of the module assigned to the open segment (since we did not sum over its variable) given by
  \[
    F(\ket{\beta_{0}})
    =
    \left[ \sum_{k \in \set{0, \dots, \nr - 1}^{E}} \mathcal{S}_{D}(\mu, \beta_{0} | \beta + k) \right] \ket{\beta_{0}}
  \]
  Because the module is absolutely simple \(F\) is a scalar map and thus
  \[
    \qinv{L, K; \rho, u}
    =
    \sclbrak{F}
    =
    \sum_{k \in \set{0, \dots, \nr - 1}^{E}} \mathcal{S}_{D}(\mu, \beta_{0} | \beta + k)
    .
    \qedhere
  \]
\end{proof}

\subsection{Example: the figure-eight knot}%
\label{sec:Example: the figure-eight knot}

\begin{figure}[htpb]
  \begin{center}
    \begin{tikzpicture}[line width=1, scale=1, xscale=1.5]
      \draw[<-] (0,1) node[above] {} \br (1,0) node[below] {};
      \draw[white, line width=10] (0,0) node {} \br (1,1) node {};
      \draw[->] (0,0) node[below] {\(1\)} \br (1,1) node[above] {};

      \draw[->] (1,1) node {} \br (2,0) node[below] {};
      \draw[white, line width=10] (1,0) node {} \br (2,1) node {};
      \draw[<-] (1,0) node {} \br (2,1) node[above] {\(3\)};

      \draw[<-] (2,1) node {} \br (3,2) node {};
      \draw[white, line width=10] (2,2) node {} \br (3,1) node {};
      \draw[->] (2,2) node[above] {\(4\)} \br (3,1) node[below] {};

      \draw[-] (3,1) node {} \br (4,2) node {};
      \draw[white, line width=10] (3,2) node {} \br (4,1) node {};
      \draw[<-] (3,2) node[above] {} \br (4,1) node[below] {};

      \draw (2,0) to (4,0) ;
      \draw[looseness = 1.5] (4,0) to [out =0, in=0] (4,1); 
      \node[below] at (4,0) {\(2\)};

      \draw (0,2) to (2,2) ;
      \draw[looseness = 1.5] (0,1) to [out =180, in=180] (0,2); 
      
      \draw[->] (-1,0) to (0,0);

      \draw[->] (4,2) node[above] {\(1'\)} to (5,2);
    \end{tikzpicture}
  \end{center}
  \caption{A cut presentation of the figure eight knot with labeled arcs.}
  \label{fig:figure-eight-arcs}
\end{figure}
For small values of \(\nr\) we numerically compute the value of \(\qinv{4_1, \rho, \mu}\) for the figure eight knot \(4_1\) and all boundary-parabolic representations \(\rho : \pi_{1}(\extr{4_{1}}) \to \slg\).
Our examples suggest that \(\qinv{}\) depends nontrivially only on \(\rho\) as a \(\pslg = \operatorname{Isom}(\mathbb{H}^{3})\) representation: the dependence on the choice of lift to \(\slg\) is trivial and there is only one class of log-meridians that give a nonzero invariant.
\note{%
  For boundary non-parabolic representations we expect more complicated dependence on the log-meridian.
}
These show exponential growth consistent with the standard volume conjectures for quantum invariants.

To begin we find all boundary-parabolic representations of \(\extr{4_{1}}\).
This computation is well-known, but we repeat it here for completeness.

\begin{proposition}
  Up to gauge equivalence there are two boundary-parabolic representations \(\tilde{\rho}_{1}, \tilde{\rho}_{2} : \pi_{1}(\extr{4_{1}}) \to \pslg\), with \(\tilde{\rho}_{1}\) the holonomy of the complete hyperbolic structure.
  As for any boundary-parabolic representations they each have a unique decoration.
  Each \(\rho_{i}\) lifts to two \(\slg\) representations \(\rho_{i}, \rho_{i}'\) with eigenvalues
  \[
    \begin{array}{lllll}
      & \rho_{1} & \rho_{2} & \rho_{1}' & \rho_{2}' \\
      \hline
      m & 1 & 1  & -1 & -1 \\
      \ell & -1 & -1 & -1 & -1
    \end{array}
    \qedhere
  \]
\end{proposition}

\begin{proof}
  The tangle diagram \(D\) in \cref{fig:figure-eight-arcs} is a cut presentation of the figure-eight knot.
  To determine a representation \(\rho : \pi(D) \to \slg\) we need to find matrices \(g_{i} = \rho(x_{i}) \in \slg\) for each arc  (with \(g_{1} = g_{1'}\) so the closure is defined) satisfying the crossing relations
  \begin{align*}
    g_{4} &= g_{3} \qn g_{1}
          &
    g_{2} &= g_{1}  \qn g_{3}
    \\
    g_{2} &= g_{3}  \qn g_{4}
          &
    g_{4} &= g_{1}  \qn g_{2}
  \end{align*}
  where \(x \qn y \defeq y^{-1} x y\).
  Eliminating \(g_{2}\) and \(g_{4}\) reduces these to
  \begin{equation}
    \label{eq:figure eight quandle relations}
    \begin{aligned}
      g_{1} \qn g_{3} &= g_{3} \qn (g_{3} \qn g_{1})
      \\
      g_{3} \qn g_{1} &= g_{1} \qn (g_{1} \qn g_{3})
    \end{aligned}
  \end{equation}
  
  For a boundary-parabolic representation all the \(g_i\) have eigenvalues \(\pm 1\).
  Consider the case where the eigenvalues are \(1\).
  Since we only need to find \(\rho\) up to conjugacy, we may assume
  \[
    g_{1} = 
    \begin{bmatrix}
      1 & 0 \\
      1 & 1
    \end{bmatrix}
    \text{ and }
    g_{3} =
    \begin{bmatrix}
      1 & -\tau \\
      0 & 1
    \end{bmatrix}
  \]
  for some \(\tau \in \mathbb{C}\).
  It is now elementary to show that our matrices satisfy the relations \eqref{eq:figure eight quandle relations} if and only if \(\tau^2 + \tau + 1 = 0\).
  The representation \(\rho_{1}\) with \(\tau = e^{\tu /3}\) is a lift of the complete hyperbolic structure on \(S^{3} \setminus 4_{1}\).
  Write \(\rho_{2}\) for the representation with \(\tau = e^{4 \pi \ii/3}\).
  Up to gauge transformation these are the only boundary-parabolic representations of \(\extr{4_{1}}\) with meridian eigenvalue \(1\).
  However, we could equally well choose them to have meridian eigenvalue \(-1\): we write  \(\rho_{1}'\) and \(\rho_{2}'\) for the representations with
  \[
    g_{1} = 
    \begin{bmatrix}
      -1 & 0 \\
      -1 & -1
    \end{bmatrix}
    \text{ and }
    g_{3} =
    \begin{bmatrix}
      -1 & \tau \\
      0 & -1
    \end{bmatrix}
  \]
  As for any boundary-parabolic representation the \(\rho_{i}\) and \(\rho_{i}'\) all have a unique decoration, and one can check directly that they all have longitude eigenvalue  \(-1\).
  One way to do this is to compute that in the labeling of \cref{fig:figure-eight-arcs} the blackboard-framed longitude commuting with the meridian  \(x_{1}\) is
  \begin{equation}
    \label{eq:figure-eight longitude}
    x_{3} x_{4}^{-1} x_{1} x_{2}^{-1} .
  \end{equation}
  It is not hard to check that
  \[
    \rho_{i}'( x_{3} x_{4}^{-1} x_{1} x_{2}^{-1} ) = 
    \rho_{i}( x_{3} x_{4}^{-1} x_{1} x_{2}^{-1} ) = 
    \begin{bmatrix}
      -1 & 0 \\
      2 (\tau - 1) \tau  & -1
    \end{bmatrix}
    \qedhere
  \]
\end{proof}

\(\mu = k/2\) is a log-meridian of \(\rho_{i}\) or \(\rho_{i}'\) depending on the parity of \(k\), so we abuse notation and write
\[
  (\rho_{i}, k/2) =
  \begin{cases}
    (\rho_{i}, k/2) & k \text{ even} \\
    (\rho_{i}', k/2) & k \text{ odd} \\
  \end{cases}
\]
W can use the rules of \cref{sec:State-sum description} to compute \(\qinv{}\) by an explicit tensor contraction; we give some details at the end of this section.
Some values are given in \cref{table:figure eight values}.

\begin{remark}
  Experimentally, for \(\nr = 2, \dots, 15\) we see that \(|\qinv{4_1, \rho_{j}, k/2}| < 10^{-8}\) unless \(k \equiv -1 \mod \nr\) and similarly that
  \[
    |
    \qinv{4_1, \rho_{i}, -1/2}
    +
    \qinv{4_1, \rho_{i}, (\nr - 1)/2}
    |
    <
    10^{-8}
  \]
  Given this observation we only include \(\qinv{4_{1}, \rho_{i}, (\nr - 1)/2}\) in \cref{table:figure eight values}, since the other choices of log-meridian are trivial or differ by a sign.

  We also observe that \(|\Im \qinv{4_1, \rho_{j}, k/2}| < 10^{-8}\) in the tested range, so we only report the real parts.
  As discussed in \cref{rem:vol conj evidence}  the same thing happens for \(\csfunc{}\) where it is a consequence of the amphichirality of \(4_1\).
\end{remark}

\begin{conjecture}
  \label{conj:parabolic invariants}
  Let \(K\) be a knot, \(\tilde{\rho} : \pi_{1}(\extr{K}) \to \pslg\) a boundary-parabolic representation, and \(\mu \in \frac{1}{2} \mathbb{Z}\) a log-meridian specifying a lift \(\rho\) of \(\tilde{\rho}\) to \(\slg\) as above.
  Then
  \begin{thmenum}
  \item \label{conj:parabolic invariants:parity}
    there is an invertible (\cref{def:product of models}) link invariant \(\eta\) that takes values in \(\set{1, -1}\) so that
    \[
      \qinv{K, \rho, \mu + \nr /2}
      =
      \eta(K)
      \qinv{K, \rho, \mu}
    \]
  \item \label{conj:parabolic invariants:vanishing}
    and 
    \(
    \qinv{K, \rho, \mu}
    =
    0
    \text{ unless }
    2 \mu \equiv -1 \pmod{\nr}.
    \)
  \end{thmenum}
  Similar things should hold for links with boundary-parabolic representations.
\end{conjecture}

\ref{conj:parabolic invariants:parity} should be related to the \(1\)-dimensional \(\qgrp\)-module \(\Pi\) defined by
\[
  K \cdot 1 = -1, E \cdot 1 = F \cdot 1 = 0.
\]
Since the Casimir has \(\Omega \cdot 1 = -\omega^{1/2} - \omega^{-1/2} = \omega^{(\nr-1)/2} + \omega^{-(\nr -1)/2}\) we can think of \(\Pi\) as having log-meridian \(\mu = \nr /2\), and in the notation of the previous paragraph \(V_{\mu} \otimes \Pi \iso V_{\mu + \nr /2}\) as \(\qgrp\)-modules.
I expect that \(\eta\) is the (ordinary, not holonomy) Reshetikhin-Turaev invariant associated to \(\Pi\).

\ref{conj:parabolic invariants:vanishing} appears to be related to the vanishing of modified dimensions.
Let \(V_{\mu} = \qfunc{g, u, \mu}\) for a parabolic decorated matrix \(g\).
The Hopf link computations in \cref{sec:Example: Hopf links} show that the modified dimension%
\note{
  Specifically, the modified dimension relative to the usual projective cover of the tensor unit.
}
of \(V_{\mu}\) is only well-defined if \(2\mu \equiv -1 \pmod{\nr}\).
Specifically, as \(\mu \to k/2\) for \(k \not \equiv -1/2\), 
\[
  \operatorname{d}(\mu)
  =
  \frac{
    \nr
  }{
    \left\{ \mu + \tfrac{1}{2} \right\}_{\omega}
  }
  =
  \nr
  \frac{
    \omega^{\mu + 1/2} - \omega^{-\mu - 1/2}
    }{
    \omega^{\nr(\mu + 1/2)} 
    -
    \omega^{-\nr(\mu + 1/2)} 
  }
  \to
  \frac{1}{0}
\]
On the other hand, computations for \(4_{1}\) suggest \(\qinv{K, \rho', \mu} \to 0\) as \((\rho', \mu) \to (\rho, k/2)\).
It may be possible to interpret the limit
\[
  \lim_{\mu \to k/2} \operatorname{d}(\mu) \qinv{K, \rho', \mu}
\]
as a nontrivial knot invariant.

\begin{table}
  \centering
  \[
    \begin{array}{l c c c c}
      \nr & \qinv{4_1, \rho_{1} } & \frac{1}{\nr} \log \qinv{4_1, \rho_{1} } & \qinv{4_1, \rho_{2} } & \frac{1}{\nr} \log \qinv{4_1, \rho_{2} }  \\
      \hline
      2 & 4.16824 & 0.713747 & 5.75783 & 0.875280 \\
      3 & 11.6551 & 0.818580 & 13.8995 & 0.877285 \\
      4 & 25.2731 & 0.807435 & 27.8557 & 0.831759 \\
      5 & 48.5023 & 0.776322 & 51.1679 & 0.787023 \\
      6 & 86.9050 & 0.744136 & 89.4770 & 0.748997 \\
      7 & 149.148 & 0.714991 & 151.518 & 0.717244 \\
      8 & 248.535 & 0.689448 & 250.647 & 0.690506 \\
      9 & 405.289 & 0.667178 & 407.126 & 0.667680 \\ 
      10 & 649.947 & 0.647689 & 651.512 & 0.647930 \\
      11 & 1028.35 & 0.630520 & 1029.67 & 0.630635 \\
      12 & 1609.03 & 0.615282 & 1610.12 & 0.615339 \\
      13 & 2493.97 & 0.601664 & 2494.86 & 0.601691 \\
      14 & 3834.41 & 0.589412 & 3835.13 & 0.589426 \\
      15 & 5853.94 & 0.578325 & 5854.52 & 0.578331 \\
    \end{array}
  \]
  \caption{
    Approximate values of the quantum invariant for (lifts of) the boundary-parabolic \(\pslg\) representations \(\rho_{1}, \rho_{2}\) of the \(4_{1}\) knot exterior at the log-meridian \(\mu = (\nr - 1)/2\).
    Here \(\rho_{1}\) is the complete hyperbolic structure.
    Compare \(\log \csfunc{4_{1}, \rho_{1}, 0} = \operatorname{Vol}(4_{1})/2\pi + 0 \ii \approx  0.323066\).
  }
  \label{table:figure eight values}
\end{table}

\begin{remark}
  \label{rem:vol conj evidence}
  Choosing the lift of \(\rho_{1}\) with trace \(2\) the classical Chern-Simons invariant of \(\extr{4_1}\) is
  \[
    \log \csfunc{4_{1}, \rho_{1}, 0}
    =
    \operatorname{Vol}(4_{1})/2\pi + 0 \ii \approx  0.323066.
  \]
  The imaginary part vanishes because \(4_{1}\) is amphichiral.
  The invariant can be computed by adapting the procedure in \cref{sec:State-sum description} to \(\csfunc{}\).
  The geometric data is the same, but there are no state sums because \(\nr = 1\).
  The values in \cref{table:figure eight values} support 
  \[
    \frac{1}{\nr}
    \log 
    \qinv{4_1, \rho_{i}, (\nr - 1)/2}
    \to
    \log \csfunc{4_{1}, \rho_{1}, 0}
    \text{ as }
    \nr \to \infty
  \]
  for \emph{both} boundary-parabolic representations as in \cref{conj:our volume}.
\end{remark}

\begin{proof}[Details of the computation]
  To determine segment and region parameters we need to choose an admissible shadow coloring.
  However, because the left eigenspace of \(g_{1}\) is spanned by \((1,0)\) there is no such choice: the segment parameters \eqref{eq:segment parameter} will always be \(0\).
  We can fix this by taking a type (A) gauge transformation, i.e.\ by simultaneously conjugating \(g_{1}\) and \(g_{3}\) by some matrix \(h\).
  Any \(h \in \slg\)  that does not fix \((1,0)\) will work, and the choice of \(h\) does not affect the value of the invariant.
  Similarly there are many possible choices of shadow color \(u\) for the topmost region.
  Once it is chosen it determines shadow colors for the remaining regions, and a generic choice of \(u\) will give an admissible shadow coloring \((\rho, u)\) of \(D\).
  
  Choosing logarithms \(\beta_{i}\) of the segment parameters \cref{thm:state sum} says that
  \begin{equation*}
    \qinv{4_{1}, \rho_{i}, \mu}
      =
      \sum_{n_{1}, \dots, n_{7} = 0}^{\nr -1}
      \begin{aligned}
        &
        \brkern{+}{}{
          \beta_{0} , \beta_{5} + n_{5}, \beta_{1} + n_{1}, \beta_{6} + n_{6}
        }
        \\
        &\times
        \brkern{+}{}{
          \beta_{4} + n_{4} + 1 - \nr, \beta_{1} + n_{1}, \beta_{5} + n_{5} + 1 - \nr, \beta_{2} + n_{2}
        }
        \\
        &\times
        \brkern{-}{}{
          \beta_{3} + n_{3} + 1 - \nr, \beta_{6} + n_{6}, \beta_{4} + n_{4} + 1 - \nr, \beta_{7} + n_{7}
        }
        \\
        &\times
        \brkern{-}{}{
          \beta_{7} + n_{7}, \beta_{2} + n_{2}, \beta_{0} , \beta_{3} + n_{3}
        }
      \end{aligned}
  \end{equation*}
  using the action functional of \cref{ex:figure eight}.
\end{proof}

\subsection{Example: Hopf links}%
\label{sec:Example: Hopf links}
\begin{figure}
  \begin{align*}
    &\begin{tikzpicture}[line width = 1, scale = 1, xscale = 1.5]
      \coordinate (1) at (0,1);
      \coordinate (2) at (0,0);
      \coordinate (3) at (1,0);
      \coordinate (4) at (1,1);
      \coordinate (3) at (1,0);
      \coordinate (5) at (2,1);
      \coordinate (6) at (2,0);
      \draw[-] (2)  to [out=90,in=180] (4);
      \draw[white, line width=10] (1) to [out=00,in=180] (3);
      \draw[-] (1)  to [out=00,in=180] (3);
      \draw[-] (3)  to [out=00,in=180] (5);
      \draw[white, line width=10] (4) to [out=00,in=90] (6);
      \draw[-] (4)  to [out=00,in=90] (6);
      \draw[->, looseness = 1.5] (6) to [out=270,in=270] (2);
      \draw[->] (5) to ++(0.5,0);
      \draw[<-] (1) to ++(-0.5,0);
      \node[above] at (1) {\(g_{1}, [v]\)};
      \node[above] at (4) {\(g_{2}, [v]\)};
      \node[below] at (1, -1) {\(\rho\)};
    \end{tikzpicture}
    &
    &\begin{tikzpicture}[line width = 1, scale = 1, xscale = 1.5]
      \coordinate (1) at (0,1);
      \coordinate (2) at (0,0);
      \coordinate (3) at (1,0);
      \coordinate (4) at (1,1);
      \coordinate (3) at (1,0);
      \coordinate (5) at (2,1);
      \coordinate (6) at (2,0);
      \draw[-] (2)  to [out=90,in=180] (4);
      \draw[white, line width=10] (1) to [out=00,in=180] (3);
      \draw[-] (1)  to [out=00,in=180] (3);
      \draw[-] (3)  to [out=00,in=180] (5);
      \draw[white, line width=10] (4) to [out=00,in=90] (6);
      \draw[-] (4)  to [out=00,in=90] (6);
      \draw[->, looseness = 1.5] (6) to [out=270,in=270] (2);
      \draw[->] (5) to ++(0.5,0);
      \draw[<-] (1) to ++(-0.5,0);
      \node[above] at (1) {\(g_{1}, [v]\)};
      \node[above] at (4) {\(g_{2}, [w]\)};
      \node[below] at (1, -1) {\(\tilde{\rho}\)};
    \end{tikzpicture}
  \end{align*}
  \caption{
    Two commuting matrices \(g_{1}, g_{2} \in \slg\) give a representation of the Hopf link exterior.
    Fixing the decoration of the open component there are two choices of decoration of the closed component.
    (Here \(v, w\) are eigenvectors of \(g_{1}\) and \(g_{2}\).)
    We call the corresponding decorated representations \(\rho\) and \(\tilde{\rho}\).
  }
  \label{fig:Hopf links with decorations}
\end{figure}

Next we compute \(\qinv{}\) in closed form for Hopf links.
Unlike for boundary-parabolic representations we see that \(\qinv{}\) depends on the choice of decoration, and studying this dependence leads to \cref{conj:opposite decoration}.

Let \(L_{H}\) be the (positively linked, zero-framed) Hopf link and \(K_{1}\) one of its components.
The tangle diagram below is a presentation of \((L_{H}, K_{1})\).
We can parametrize representations \(\rho\) of (the exterior of) \(L_{H}\) by choosing matrices \(g_{1}, g_{2} \in \slg\) corresponding to meridians as
\begin{center}
  \begin{tikzpicture}[line width = 1, scale = 1, xscale = 1.5]
    \coordinate (1) at (0,1);
    \coordinate (2) at (0,0);
    \coordinate (3) at (1,0);
    \coordinate (4) at (1,1);
    \coordinate (3) at (1,0);
    \coordinate (5) at (2,1);
    \coordinate (6) at (2,0);
    \draw[-] (2)  to [out=90,in=180] (4);
    \draw[white, line width=10] (1) to [out=00,in=180] (3);
    \draw[-] (1)  to [out=00,in=180] (3);
    \draw[-] (3)  to [out=00,in=180] (5);
    \draw[white, line width=10] (4) to [out=00,in=90] (6);
    \draw[-] (4)  to [out=00,in=90] (6);
    \draw[->, looseness = 1.5] (6) to [out=270,in=270] (2);
    \draw[->] (5) to ++(0.5,0);
    \draw[<-] (1) to ++(-0.5,0);
    \node[above] at (1) {\(g_{1}\)};
    \node[left] at (2) {\(g_{1} g_{2} g_{1}^{-1}\)};
    \node[above] at (4) {\(g_{2}\)};
    \node[above] at (5) {\(g_{2}^{-1} g_{1} g_{2}\)};
  \end{tikzpicture}
\end{center}
It is clear that this is a well-defined coloring of the closure if and only if \(g_{1}\) and \(g_{2}\) commute.
If they do, then there are eigenvectors \(v, w\) and eigenvalues \(m_{1}, m_{2}\) with
\begin{align*}
  v g_{1} &= m_{1}^{-1} v & w g_{1} &= m_{1} w \\
  \\
  v g_{2} &= m_{2}^{-1} v & w g_{2} &= m_{1} w
\end{align*}
(Again we note the unfortunate inverse of \cref{def:decorated matrix}.)
Fixing the decoration of the open component to be \([v]\) there are two choices for the decoration of the closed component.
We call these \(\rho\) and \(\tilde{\rho}\) as in \cref{fig:Hopf links with decorations}.
They have the same underlying \(\slg\) representation but different decorations.%
\note{
  If \(\rho\) is boundary-parabolic then \(v\) and \(w\) are proportional and \(\rho = \tilde{\rho}\) as decorated representations.
}
Note that for  \(\rho\) both crossings are pinched and for \(\tilde{\rho}\) neither are.
A log-decoration for \(\rho\) is given by log-meridians \(\mu_{1}, \mu_{2}\) with \(e^{\tu \mu_{i}} = m_{i}\).
We choose a log-decoration for \(\tilde{\rho}\) with the same \(\mu_{1}\) for the open component and a log-meridian \(\tilde{\mu}_{2}\) with \(e^{\tu \tilde{\mu}_{2}} = m_{2}^{-1}\).

\begin{proposition}
  \label{thm:Hopf link values}
  \begin{align}
    \qinv{L_{H}, K_{1}; \rho, \mu_{1}, \mu_{2}}
    &=
    \omega^{
      2 \mu_{1} \mu_{2}
      - (\nr - 1) ( \mu_{1} + \mu_{2} + \tfrac{1}{2} )
    }
    \left\{ \mu_{1} + \tfrac{1}{2} \right\}_{\omega}
    \\
    \qinv{L_{H}, K_{1}; \tilde{\rho}, \mu_{1}, \tilde{\mu}_{2}}
    &=
    \omega^{
      - 2 \mu_{1} \tilde{\mu}_{2}
      + (\nr - 1) ( \mu_{1} + \tilde{\mu}_{2} + \tfrac{1}{2} )
    }
    \left\{ \mu_{1} + \tfrac{1}{2} \right\}_{\omega}
  \end{align}
  where
  \[
    \left\{ \lambda  \right\}_{\omega}
    \defeq
    \frac{
      \omega^{\nr \lambda} - \omega^{-\nr\lambda}
      }{
      \omega^{\lambda} - \omega^{-\lambda}
    }
    .
    \qedhere
  \]
\end{proposition}

As observed in \cref{sec:Properties of qfun} this shows that \(\qfunc{}\) is invariant under an appropriately-defined change of decoration.
We conjecture this holds in general.

\begin{proof}[Computation for \(\rho\)]
  Because both crossings are pinched we can choose a shadow-coloring so that they are \(E\)-nilpotent as in \cref{thm:diagonal coloring lemma}.
  We can therefore work in the highest-weight basis \(\set{\wb{\mu_{i} - k} \given k = 0, \dots \nr -1}\).
  In this basis the braiding coefficients are given by \cref{eq:colored-Jones-R-matrix}:
  \begin{equation*}
    c_{n_{1} n_{2}}^{n_{3} n_{4}} 
    =
    \delta_{n_{1} + n_{2}}^{n_{3} + n_{4}} \omega^{n_{1}'(n_{2} -2\mu_{2}) + \mu_{1} \mu_{2}}
    \frac{
      \qp{\omega^{-2\mu_{2}}}{n_{2}}
      \qp{\omega}{n_{1}}
    }{
      \qp{\omega^{-2\mu_{2}}}{n_{4}}
      \qp{\omega}{n_{4} - n_{2}}
      \qp{\omega}{n_{3}}
    }
  \end{equation*}
  Assign indices to the bases of each vector space as
  \begin{center}
    \begin{tikzpicture}[line width = 1, scale = 1, xscale = 1.5]
      \coordinate (1) at (0,1);
      \coordinate (2) at (0,0);
      \coordinate (3) at (1,0);
      \coordinate (4) at (1,1);
      \coordinate (3) at (1,0);
      \coordinate (5) at (2,1);
      \coordinate (6) at (2,0);
      \draw[-] (2)  to [out=90,in=180] (4);
      \draw[white, line width=10] (1) to [out=00,in=180] (3);
      \draw[-] (1)  to [out=00,in=180] (3);
      \draw[-] (3)  to [out=00,in=180] (5);
      \draw[white, line width=10] (4) to [out=00,in=90] (6);
      \draw[-] (4)  to [out=00,in=90] (6);
      \draw[->, looseness = 1.5] (6) to [out=270,in=270] (2);
      \draw[->] (5) to ++(0.5,0);
      \draw[<-] (1) to ++(-0.5,0);
      \node[above] at (1) {\(n_{1}\)};
      \node[left] at (2) {\(n_{2}\)};
      \node[below] at (3) {\(n_{3}\)};
      \node[above] at (4) {\(n_{4}\)};
      \node[above] at (5) {\(n_{5}\)};
      \node[right] at (6) {\(n_{6}\)};
    \end{tikzpicture}
  \end{center}
  Because we are working in a highest-weight basis it suffices to find the coefficient \(n_{1} = n_{5} = 0\) and the pivotal structure imposes \(n_{6} = n_{2}\).
  Since 
  \(
      \qp{\omega}{l}^{-1}
  \)
  is \(0\) if \(l < 0\) the braiding matrix of the second crossing forces \(n_{3} = 0\).
  Now the \(\delta\) function in the matrix of the first crossing gives \(n_{2} = n_{4}\), so writing \(c(1)\) and \(c(2)\) for the matrix coefficients of the two crossings we compute
  \begin{align*}
    &\qinv{L_{H}, K_{1}; \rho, \mu_{1}, \mu_{2}}
    \\
    &=
    \sum_{n = 0}^{\nr -1}
    c(1)_{0, n}^{0, n}
    c(2)_{n, 0}^{n, 0}
    \omega^{-(\nr - 1)(\mu_{2} - n )}
    \\
    &=
    \sum_{n = 0}^{\nr -1}
    \omega^{-(\nr - 1)(\mu_{2} - n )}
    \frac{
      \qp{\omega^{- 2\mu_{2}}}{n}
      \qp{\omega}{0}
      }{
      \qp{\omega^{-2 \mu_{2}}}{n}
      \qp{\omega}{0}
      \qp{\omega}{n}
    }
    \omega^{n(0 - 2 \mu_{1})}
    \frac{
      \qp{\omega^{-2\mu_{1}}}{n}
      \qp{\omega}{n}
      }{
      \qp{\omega^{-2\mu_{1}}}{0}
      \qp{\omega}{0}
      \qp{\omega}{n}
    }
    \\
    &=
    \sum_{n = 0}^{\nr -1}
    \omega^{-(\nr - 1)\mu_{2}}
    \omega^{- n(1 + 2 \mu_{1}) }
  \end{align*}
  Here the factor of \(K^{-(\nr -1)(\mu_{2} - n)}\) comes from the downward-oriented evaluation map.
  The conclusion follows from elementary algebra and
  \[
    \frac{
      1-  \omega^{\nr \gamma}
    }{
      1-  \omega^{\gamma}
    }
    =
    \sum_{n = 0}^{\nr -1} \omega^{n \gamma}.\qedhere
  \]
\end{proof}

\begin{proof}[Computation for \(\tilde{\rho}\)]
  Choose an admissible shadow \(u\) for the topmost region and compute the segment parameters of the diagram.
  It is not hard to see that they are given by 
  \begin{center}
    \begin{tikzpicture}[line width = 1, scale = 1, xscale = 1.5]
      \coordinate (1) at (0,1);
      \coordinate (2) at (0,0);
      \coordinate (3) at (1,0);
      \coordinate (4) at (1,1);
      \coordinate (3) at (1,0);
      \coordinate (5) at (2,1);
      \coordinate (6) at (2,0);
      \draw[-] (2)  to [out=90,in=180] (4);
      \draw[white, line width=10] (1) to [out=00,in=180] (3);
      \draw[-] (1)  to [out=00,in=180] (3);
      \draw[-] (3)  to [out=00,in=180] (5);
      \draw[white, line width=10] (4) to [out=00,in=90] (6);
      \draw[-] (4)  to [out=00,in=90] (6);
      \draw[->, looseness = 1.5] (6) to [out=270,in=270] (2);
      \draw[->] (5) to ++(0.5,0);
      \draw[<-] (1) to ++(-0.5,0);
      \node[above] at (1) {\(b\)};
      \node[left] at (2) {\(b z /m_{1}\)};
      \node[below] at (3) {\(b m_{2} \)};
      \node[above] at (4) {\(bz\)};
      \node[above] at (5) {\(b\)};
    \end{tikzpicture}
  \end{center}
  for
  \[
    b =
    -
    \frac{
      v \evec{2}   
      }{
      v u
    }
    \text{ and }
    z
    =
    \frac{
      vu
      }{
      v \evec{2}
    }
    \frac{
      w \evec{2}
      }{
      w u
    }
    .
  \]
  Here \(z\) is the shape parameter of the north tetrahedron of the left-hand crossing.
  Choosing \(\beta, \zeta\) with
  \(
    e^{\tu \beta} = b,
    e^{\tu \zeta} =  z
  \)
  we can assign the state-sum log-parameters as
  \begin{center}
    \begin{tikzpicture}[line width = 1, scale = 2, xscale = 1.5]
      \coordinate (1) at (0,1);
      \coordinate (2) at (0,0);
      \coordinate (3) at (1,0);
      \coordinate (4) at (1,1);
      \coordinate (3) at (1,0);
      \coordinate (5) at (2,1);
      \coordinate (6) at (2,0);
      \draw[-] (2)  to [out=90,in=180] (4);
      \draw[white, line width=10] (1) to [out=00,in=180] (3);
      \draw[-] (1)  to [out=00,in=180] (3);
      \draw[-] (3)  to [out=00,in=180] (5);
      \draw[white, line width=10] (4) to [out=00,in=90] (6);
      \draw[-] (4)  to [out=00,in=90] (6);
      \draw[->, looseness = 1.5] (6) to [out=270,in=270] (2);
      \draw[->] (5) to ++(0.5,0);
      \draw[<-] (1) to ++(-0.5,0);
      \node[above] at (1) {\(\beta\)};
      \node[left] at (2) {\(\beta + \zeta - \mu_{1} + n_{2}\)};
      \node[below] at (3) {\(\beta - \tilde{\mu}_{2} + n_{3}\)};
      \node[above] at (4) {\(\beta + \zeta + n_{4}\)};
      \node[above] at (5) {\(\beta\)};
      \node[right] at (6) {\(\beta + \zeta - \mu_{1} + n_{2} + 1 - \nr\)};
    \end{tikzpicture}
  \end{center}
  Writing
  \begin{align*}
    \zeta_{\lW} &= \zeta - 2\mu_{2}
    \\
    \zeta_{\lS} &= \zeta + 2 \tilde{\mu}_{s} - 2 \mu_{1}
    \\
    \zeta_{\lE} &= \zeta + 2 \tilde{\mu}_{2}
  \end{align*}
  the associated state sum is
  \begin{equation}
    \label{eq:hopf non pinched state sum}
    \begin{aligned}
      \frac{
        \omega^{-6 \mu_{1} \tilde{\mu}_{2}}
        }{
        \nr^{2} 
      }
      \sum_{n_{2}, n_{2}, n_{4} = 0}^{\nr -1}
      &
      \frac{
        \pf{\zeta + n_{4}}
        \pf{-\zeta - n_{4}}
        }{
        \pft{\zeta_{\lW} + n_{2}}
        \pf{-\zeta_{\lW} - n_{2} + \nr -1}
      }
      \\
      &\times
      \frac{
        \pf{\zeta_{\lS} + n_{3} - n_{2}}
        \pf{-\zeta_{\lS} - n_{3} + n_{2} + \nr - 1}
        }{
        \pf{\zeta_{\lE} + n_{4} - n_{3}}
        \pft{-\zeta_{\lE} - n_{4} + n_{3}}
      }
      \\
      &\times
      \omega^{
        2 \mu_{1} n_{3}
        +
        \tilde{\mu}_{2}(2 n_{2} - 2 n_{4} + 1 - \nr)
      }
    \end{aligned}
  \end{equation}
  Using the definition \eqref{eq:pft def} of \(\pft{}\) and
  \[
    \frac{
      \pf{\zeta}
      }{
      \pf{\zeta + \nr - 1}
    }
    =
    \frac{
      1 - \omega^{\nr \zeta}
      }{
      1 - \omega^{\zeta}
    }
  \]
  we can re-write the summand as a product of terms of the form \(\pf{\theta}\pft{-\theta}\).
  Applying the inversion relation \eqref{eq:pf inversion} shows that \eqref{eq:hopf non pinched state sum} is equal to
  \begin{equation}
    \label{eq:hopf non pinched state sum 2}
    \begin{aligned}
      \frac{
        \omega^{-6 \mu_{1} \tilde{\mu}_{2}}
        }{
        \nr^{2} 
      }
      \sum_{n_{2}, n_{2}, n_{4} = 0}^{\nr-1}
      &
      \frac{
        \Qfunc{\zeta + n_{4}}
        \Qfunc{\zeta_{\lS} + n_{3} - n_{2}}
        }{
        \Qfunc{\zeta_{\lE} + n_{4} - n_{3}}
        \Qfunc{-\zeta_{\lW} - n_{2}}
      }
      \\
      &\times
      \frac{
        1 - \omega^{-\nr \zeta  }
        }{
        1 - \omega^{- \zeta - n_{4} }
      }
      \frac{
        1 - \omega^{-\nr (\zeta - 2 \mu_{1}) }
        }{
        1 - \omega^{- \zeta + 2 \mu_{1} - n_{2} }
      }
      \\
      &\times
      \omega^{
        2 \mu_{1} n_{3}
        +
        \tilde{\mu}_{2}(2 n_{2} - 2 n_{4} + 1 - \nr)
        +
        (\nr -1)(\zeta + \zeta_{\lS} + n_{4} + n_{3} - n_{2})
      }
    \end{aligned}
  \end{equation}
  where \(\Qfunc{\zeta} = \omega^{- \zeta(\zeta + \nr -1)/2}\).
  Expanding the \(\Qfunc{}\) terms shows that the summand in \eqref{eq:hopf non pinched state sum 2} is equal to \(\omega^{n_{3}(n_{4} - n_{2} -1)}\) times terms not depending on \(n_{3}\), so summing over \(n_{3}\) first gives \(\nr\) times a Kronecker delta enforcing \(n_{4} = n_{2} + 1\).
  Imposing this condition and simplifying the \(\Qfunc{}\) terms gives
  \begin{equation}
    \label{eq:hopf non pinched state sum 3}
    \begin{aligned}
      &
      \frac{
        \omega^{-2 \mu_{1} \tilde{\mu}_{2}}
        }{
        \nr
      }
      \sum_{n_{2} = 0}^{\nr -1}
      \frac{
        1 - \omega^{-\nr \zeta  }
        }{
        1 - \omega^{- \zeta - n_{2} - 1 }
      }
      \frac{
        1 - \omega^{-\nr (\zeta - 2 \mu_{1}) }
        }{
        1 - \omega^{- \zeta + 2 \mu_{1} - n_{2} }
      }
      \omega^{
        (\nr -1)(\tilde{\mu}_{2} + \zeta +  n_{2} + 1)
      }
      \\
      &=
      \frac{
        \omega^{(\nr - 1 -2 \mu_{1}) \tilde{\mu}_{2} }
        }{
        \nr
      }
      \sum_{n_{2} = 0}^{\nr - 1}
      \frac{
        1 - \omega^{\nr \zeta  }
        }{
        1 - \omega^{ \zeta + n_{2} + 1 }
      }
      \frac{
        1 - \omega^{-\nr (\zeta - 2 \mu_{1}) }
        }{
        1 - \omega^{- \zeta + 2 \mu_{1} - n_{2} }
      }
    \end{aligned}
  \end{equation}
  Expanding in geometric series then taking the sum over \(n_{2}\) shows that this is equal to
  \begin{align*}
     &\frac{
        \omega^{(\nr - 1 -2 \mu_{1}) \tilde{\mu}_{2} }
        }{
        \nr
      }
      \sum_{n_{2}, s, t =0}^{\nr - 1}
      \omega^{s (\zeta + n_{2} + 1) - t(\zeta Z - 2 \mu_{1} + n_{2})}
      \\
      &=
        \omega^{(\nr - 1 -2 \mu_{1}) \tilde{\mu}_{2} }
      \sum_{ s = 0 }^{\nr - 1}
      \omega^{s (2 \mu_{1} + 1)}
  \end{align*}
  which gives the claim after further elementary algebra.
\end{proof}

\section{The pinched limit}
\label{sec:The pinched limit}
As discussed in \cref{def:shadow coloring} certain admissible colorings of a crossing are \defemph{pinched}, which means the ideal octahedron assigned to them is geometrically degenerate.
In \cref{sec:construction of invariant} we defined the braiding away from these crossings and claimed that it is well-defined at the singular limit.
Here we prove this claim and compute its matrix coefficients, translating some results of \cite{McPhailSnyderAlgebra} to our conventions.
We use these results to show \(\qfunc{}\) recovers the link invariants of \textcite{Kashaev1995} and \textcite{Akutsu1992}.

\subsection{The braiding at a pinched crossing}
\label{sec:pinched limit brading}

By definition, a shadow-coloring
\begin{center}
  \begin{tikzpicture}[line width = 1, scale = 1, xscale = 1.5]
    \coordinate (1) at (0,1);
    \coordinate (2) at (0,0);
    \coordinate (3) at (1,0);
    \coordinate (4) at (1,1);
    \draw[->] (1) \br (3) ;
    \draw[->] (2) \br (4) ;
    \node[left] at (1) {\(g_{1}, [v_{1}], m_{1}\)};
    \node[left] at (2) {\(g_{2}, [v_{2}], m_{2}\)};
    \node[above] at (0.5,1) {\(u\)};
  \end{tikzpicture}
\end{center}
of a crossing (of either sign) is \defemph{pinched} if \([v_{1}] = [v_{2}]\) as subspaces of \(\mathbb{C}^{2}\) (i.e.\ as points of \(\pjsp\)).
At any pinched crossing the segment parameters are given by
\begin{center}
  \begin{tikzpicture}[line width = 1, scale = 1, xscale = 1.5]
    \coordinate (1) at (0,1);
    \coordinate (2) at (0,0);
    \coordinate (3) at (1,0);
    \coordinate (4) at (1,1);
    \draw[->] (1) \br (3) ;
    \draw[->] (2) \br (4) ;
    \node[left] at (1) {\(b\)};
    \node[left] at (2) {\(m_{1} b\)};
    \node[right] at (3) {\(m_{2} b\)};
    \node[right] at (4) {\( b\)};
  \end{tikzpicture}
\end{center}
for some \(b \in \mathbb{C}^{\times}\).
As a consequence the arguments of the quantum dilogarithms in the braiding kernels (\ref{eq:braiding kernel positive},\ref{eq:braiding kernel negative}) lie in \(\mathbb{Z}\) where the poles and zeros of \(\pf{}\) are.

Given log-meridians \(\mu_{1}\) and \(\mu_{2}\) we are lead to consider the braiding matrix coefficients
\[
  \hat{c}_{n_{1} n_{2}}^{n_{3} n_{4}}
  \defeq
  \brkern{+}{\mu_{1}, \mu_{2}}{\beta + n_{1} , \beta + \mu_{1} + n_{2}, \beta + \mu_{2} + n_{3}, \beta + n_{4}}
\]
corresponding to the diagram state
\begin{center}
  \begin{tikzpicture}[line width = 1, scale = 1, xscale = 1.5]
    \coordinate (1) at (0,1);
    \coordinate (2) at (0,0);
    \coordinate (3) at (1,0);
    \coordinate (4) at (1,1);
    \draw[->] (1) \br (3) ;
    \draw[->] (2) \br (4) ;
    \node[left] at (1) {\(\beta + n_{1}\)};
    \node[left] at (2) {\(\beta + \mu_{1} + n_{2}\)};
    \node[right] at (3) {\(\beta + \mu_{2} + n_{3} \)};
    \node[right] at (4) {\( \beta + n_{4}\)};
  \end{tikzpicture}
\end{center}
Here \(\beta\) is some logarithm of \(b\).
The \(\hat{c}_{n_{1} n_{2}}^{n_{3} n_{4}}\) are independent of it.
We need to compute them by taking the limit of the meromorphic function \(\brkern{+}{}{}\).
For an integer \(n\), write \(\modb{n}\) for the unique integer with
\[
  0 \le \modb{n} < \nr \text{ and } n \equiv \modb{n} \mod{\nr \mathbb{Z}}
\]
and set
\[
  \gamma_{\nr}(n) \defeq \frac{n - \modb{n}}{\nr}
  .
\]

\begin{theorem}[\protect{\cite[Theorem 6.5]{McPhailSnyderAlgebra}}]
  \label{thm:pinched braiding is defined}
  With respect to the bases above the matrix coefficients of the braiding at a positive, pinched crossing are
  \begin{equation}
    \label{eq:pinched matrix coefficients}
    \begin{aligned}
      \hat{c}_{n_1 n_2}^{n_{3} n_{4} }
      &=
      \frac{1}{\nr}
      \theta_{n_1 n_2}^{n_{3} n_{4}}
      A_{n_1 n_2}^{n_{3} n_{4}}
      \\
      &\phantom{=}\times
      \omega^{
        \mu_{1}(n_{3} - n_{1})
        -
        \mu_{2}(n_{4} - n_{2})
        +
        n_{2} - n_{1}
      }
      \\
      &\phantom{=}\times
      \frac{
        \pf{\modb{n_{4} - n_1}}
        \pf{\modb{n_2 - n_{3}}}
      }{
        \pf{\modb{n_{4} - n_{3}}}
        \pf{\modb{n_2 - n_1 - 1}}
      }.
    \end{aligned}
  \end{equation}
  where
  \begin{equation}
    \label{eq:cutoff theta}
    \theta_{n_1 n_2}^{n_{3} n_{4}}
    =
    \begin{cases}
      1 & 
      \modb{ n_{4} - n_{3}}
      +
      \modb{ n_{2} - n_{3}}
      -
      \modb{ n_{4} - n_{3}}
      -
      \modb{ n_{2} - n_{1} - 1}
      =
      1
      \\
      0 & \text{otherwise}
    \end{cases}
  \end{equation}
  and
  \begin{equation}
    \label{eq:pinched matrix A}
    A_{n_1 n_2}^{n_{3} n_{4}}
    =
    \frac{
      a_{\lW}
      }{
      m_{1} a_{\lN}
    }
    \frac{
      (a_{\lW}/m_{1} a_{\lN})^{\gamma_{\nr}(n_{2} - n_{1} - 1)}
      (m_{2} a_{\lE} / a_{\lN})^{\gamma_{\nr}(n_{4} - n_{3})}
      }{
      (m_{2} a_{\lS}/m_{1} a_{\lN})^{\gamma_{\nr}(n_{2} - n_{3})}
    }
  \end{equation}
  in terms of the region parameters of the pinched crossing.
\end{theorem}

\begin{proof}
  Quasi-periodicity of \(\pf{}\) gives
  \[
    \pf{\zeta + n}
    =
    \pf{\zeta + \modb{n}}
    (1 - \omega^{\nr \zeta})^{-\gamma_{\nr}(n)}
  \]
  so we can write the braiding kernel of a generic crossing as
  \begin{equation}
    \label{eq:pinched limit kernel}
    \begin{aligned}
      \brkern{+}{\mu}{\beta}
      &=
      \frac{
        1
        }{
        \nr
      }
      \frac{
        \pf{\zeta_{\lN}^{0} + \modb{n_{4} - n_{1}}}
        \pf{\zeta_{\lS}^{0} + \modb{n_{2} - n_{3}}}
      }{
        \pf{\zeta_{\lE}^{0} + \modb{n_{4} - n_{3}}}
        \pf{\zeta_{\lW}^{0} + \modb{n_{2} - n_{1} - 1}}
      }
      \\
      &\phantom{=}\times
      \omega^{
        \mu_{1}(\beta_{3} - \beta_{1}) 
        +
        \mu_{2}(\beta_{2} - \beta_{4})
        -
        \mu_{1} \mu_{2}
        -
        (\nr -1) (\zeta_{W}^{0} + n_{2} - n_{1})
      }
      \\
      &\phantom{=}\times
      \frac{
        (z_{\lE}^{1})^{\gamma_{\nr}(n_{4} - n_{3})}
        (z_{\lW}^{1})^{\gamma_{\nr}(n_{2} - n_{1} + \nr - 1)}
        }{
        (z_{\lN}^{1})^{\gamma_{\nr}(n_{4} - n_{1})}
        (z_{\lS}^{1})^{\gamma_{\nr}(n_{2} - n_{3})}
      }
    \end{aligned}
  \end{equation}
  where \(\zeta_{\lN}^{0} = \beta_{4} - \beta_{1}\) and so on are the flattening parameters (\ref{eq:flattening-N}--\ref{eq:flattening-E}) and the \(z_{j}^{1}\) are the shape parameters (\ref{eq:shape-N}--\ref{eq:shape-E}).
  Since \(\gamma_{\nr}(k + \nr) = 1 + \gamma_{\nr}(k)\) the last line of the right-hand side of \eqref{eq:pinched matrix coefficients} can be expanded as
  \begin{equation}
    \label{eq:pinched matrix A terms}
    \begin{aligned}
      &K^{
        1 +
        \gamma_{\nr}(n_{4} - n_{3})
        +
        \gamma_{\nr}(n_{2} - n_{1} - 1)
        -
        \gamma_{\nr}(n_{4} - n_{1})
        -
        \gamma_{\nr}(n_{2} - n_{3})
      }
      \\
      &\times
      \frac{
        a_{\lW}
        }{
        m_{1}
      }
      \frac{
        (a_{\lW}/m_{1})^{\gamma_{\nr}(n_{2} - n_{1} - 1)}
        (a_{\lE} m_{2})^{\gamma_{\nr}(n_{2'} - n_{1'})}
        }{
        (a_{\lS}m_{2}/m_{1})^{\gamma_{\nr}(n_{2} - n_{1'})}
        (a_{\lN})^{\gamma_{\nr}(n_{2'} - n_{1})}
      }
    \end{aligned}
  \end{equation}
  where \(K = a_{\lN} / (1 - b_{4}/b_{1})\).

  Now take the pinched limit, which means that each \(\zeta_{j}^{0} \to 0\) and the \(a_{j}\) are fixed by the coloring of the diagram.
  The only potentially singular term is the power of \(K\) which has a zero of order
  \[
    1
    +
    \gamma_{\nr}(n_{2'} - n_{1'})
    +
    \gamma_{\nr}(n_{2} - n_{1} - 1)
    -
    \gamma_{\nr}(n_{2'} - n_{1})
    -
    \gamma_{\nr}(n_{2} - n_{1'})
  \]
  As in the proof of \cite[Theorem 5.6(a)]{McPhailSnyderAlgebra} one can argue that this quantity is never negative, so there are no poles.
  When it is positive the matrix coefficient is zero, which gives the condition in \eqref{eq:cutoff theta}.
  The other terms in \cref{eq:pinched matrix A terms} give \eqref{eq:pinched matrix A} after some rearranging.
  It is clear that the first two lines of the right hand side of \eqref{eq:pinched limit kernel} give the remaining terms in \eqref{eq:pinched matrix coefficients}.
\end{proof}

\begin{corollary}
  \label{thm:qfunc is holomorphic}
  The braiding maps are holomorphic sections of a complex vector bundle of \(\qgrp\)-intertwiners over the space of admissible colorings  of a crossing.
\end{corollary}

\begin{proof}
  Their matrix coefficients \(\brkern{\pm}{}{}\) are meromorphic functions of the local coordinates \(\beta_{i}, \mu_{j}\).
  The theorem above says that the poles of the quantum dilogarithms defining \(\brkern{\pm}{}{}\) cancel at every admissible coloring, so in fact they are holomorphic.
\end{proof}

In practice it is usually inconvenient to work directly with the matrix coefficients \eqref{eq:pinched matrix coefficients} because of the complicated cutoff term \eqref{eq:cutoff theta}.
If we choose an appropriate shadow coloring we can instead work in a highest weight basis.

\begin{definition}
  \label{def:E-nilpotent}
  We say that \(\qpf{a, b, \mu}\) is \defemph{\(E\)-nilpotent} if \(a = \omega^{\nr \mu}\) so that \(E\) acts nilpotently.
  We say a shadow-colored crossing is \defemph{\(E\)-nilpotent} if the modules assigned by \(\qfunc{}\) are all \(E\)-nilpotent.
\end{definition}

It is elementary to see that
\begin{center}
  \begin{tikzpicture}[line width = 1, scale = 1, xscale = 1.5]
    \coordinate (1) at (0,1);
    \coordinate (2) at (0,0);
    \coordinate (3) at (1,0);
    \coordinate (4) at (1,1);
    \draw[->] (1) \br (3) ;
    \draw[->] (2) \br (4) ;
    \node[left] at (1) {\(g_{1}, [v_{1}], m_{1}\)};
    \node[left] at (2) {\(g_{2}, [v_{2}], m_{2}\)};
    \node[above] at (0.5,1) {\(u\)};
  \end{tikzpicture}
\end{center}
is \(E\)-nilpotent if and only if \(g_{1} u = m_{1} u\) and \(g_{2} u = m_{2} u\) (i.e. \(u\) is a simultaneous left eigenvector with the appropriate eigenvalues).
In this case it is also an eigenvector of \(g_{1} \qn g_{2}\) and \(g_{1} \qn^{-1} g_{2}\) so we have \(u_{\lW} = m_{1} u_{\lN}\), \(u_{\lE} = m_{2} u_{\lN}\), and \(u_{\lS} = m_{1} m_{2} u_{\lN}\).
Thus at any \(E\)-nilpotent crossing the region parameters are
\begin{align}
  \label{eq:E-nilpotent a}
  a_{\lW} &= a_{\lN} m_{1}
  , &
  a_{\lE} &= a_{\lN} m_{2}
  , &
  a_{\lS} &= a_{\lN} m_{1} m_{2}.
\end{align}
If \(g_{1}\) and \(g_{2}\) commute they have a common right eigenspace and there is an \(E\)-nilpotent shadow coloring.
Such a crossing is necessarily pinched.

\begin{theorem}[\protect{\cite[Theorem 5.9]{McPhailSnyderAlgebra}}]
  \label{thm:braiding in highest weight}
  Let \(X\) be an \(E\)-nilpotent, pinched crossing with braiding
  \[
    \qfunc{X} :
    \qpf{m_{1}, b_{1}, \mu_{1}}
    \otimes
    \qpf{m_{2}, b_{2}, \mu_{1}}
    \to
    \qpf{m_{2}, b_{4}, \mu_{2}}
    \otimes
    \qpf{m_{1}, b_{3}, \mu_{1}}
  \]
  Each module has a highest-weight basis 
  \(
    \wb{\mu_{i}}, \dots, \wb{\mu_{i} - (\nr - 1)}
  \)
  in terms of the vectors \eqref{eq:weight basis}.
  The matrix coefficients
  \[
    \qfunc{X}( \wb{\mu_{1} - n_{1}} \otimes \wb{\mu_{1} - n_{4}} )
    =
    \sum_{n_{3} n_{4} = 0}^{\nr}
    c_{n_{1} n_{2}}^{n_{3}, n_{4}} 
    \wb{\mu_{2} - n_{4}} \otimes \wb{\mu_{1} - n_{3}}
  \]
  of the braiding with respect to this basis are
  \begin{equation}
    \label{eq:colored-Jones-R-matrix}
    c_{n_{1} n_{2}}^{n_{3} n_{4}} 
    =
    \delta_{n_{1} + n_{2}}^{n_{3} + n_{4}} \omega^{n_{1}'(n_{2} -2\mu_{2}) + \mu_{1} \mu_{2}}
    \frac{
      \qp{\omega^{-2\mu_{2}}}{n_{2}}
      \qp{\omega}{n_{1}}
    }{
      \qp{\omega^{-2\mu_{2}}}{n_{4}}
      \qp{\omega}{n_{4} - n_{2}}
      \qp{\omega}{n_{3}}
    }
  \end{equation}
\end{theorem}

The proof of this theorem is an involved computation using terminating \(q\)-series.
It can be understood as taking discrete Fourier transform, since the bases \(\ket{\beta}\) and  \(\wb{\mu}\) are related by 
\[
  \wb{\mu}
  =
  \sum_{k=0}^{\nr - 1} \omega^{(\beta + k) \mu} \ket{\beta + k}
\]
At \(\mu_{1} = \mu_{2} = \nr-1/2\) the matrix \eqref{eq:pinched matrix coefficients} recovers Kashaev's \(R\)-matrix \cite{Kashaev1995}, while \eqref{eq:colored-Jones-R-matrix} is the braiding matrix defining the \(\nr\)th colored Jones polynomial at \(q = \omega\).
This equivalence was first shown by \textcite{Murakami2001}.

\subsection{Recovering the Kashaev--Akutsu-Deguchi-Ohtsuki invariants}%
\label{sec:Relation to the Kashaev--Akutsu-Deguchi-Ohtsuki invariant}

Below we explain how to recover Kashaev's quantum dilogarithm invariant \cite{Kashaev1995} and the ADO invariants \cite{Akutsu1992} from \(\qfunc{L, K; \rho, \mu}\) in the limit where \(\rho\) is reducible (hence geometrically trivial).
For simplicity we consider only links, although there are similar results for tangles.

\begin{definition}
  Let \(L\) be a (framed, oriented) link.
  A representation \(\rho : \pi_{1}(\extr{L}) \to \slg\) makes \(\mathbb{C}^{2}\) a \(\pi_{1}(\extr{L})\)-module.
  We say \(\rho\) is \defemph{reducible} if \(\mathbb{C}^{2}\) is reducible as a \(\pi_{1}(\extr{L})\)-module and \defemph{diagonalizable} if it decomposes into two \(1\)-dimensional submodules.
  A decoration of a reducible representation is \defemph{standard} if it assigns every meridian the same eigenspace, which we can identify with the fixed \(\pi(\extr{L})\)-submodule.
\end{definition}

\begin{lemma}
  \label{thm:diagonal coloring lemma}
  Let \(\rho\) be a diagonalizable representation of \(L\) with a standard decoration.
  Then for any diagram \(D\) of \(L\) there is an admissible shadow coloring \((\rho, u)\) for which every crossing is pinched and \(E\)-nilpotent.
\end{lemma}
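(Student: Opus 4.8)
The plan is to build the shadow coloring from a common eigenvector of the Wirtinger matrices and then read everything off from the pinched and $E$-nilpotent formulas already recorded in the paper.

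Since $\rho$ is diagonal, the Wirtinger matrices $g_i=\rho(x_i)$ share an eigenbasis, and the standard decoration selects one common eigenline $W$ as the distinguished eigenline of every meridian, hence a coherent choice of distinguished eigenvalues $m_i$. By \cref{thm:shadow colorings one region} a shadow coloring of $(D,\rho)$ is pinned down by the color $u_0$ of the topmost region; take $u_0$ to span the common eigenline of the $g_i$ complementary to $W$ (after replacing $\rho$ by a gauge-equivalent representation if necessary, which affects neither the hypotheses nor the conclusion by the gauge-invariance in \cref{thm:link invariants}, we may assume this line is in general position with respect to the standard basis of $\mathbb{C}^2$). The shadow rule propagates region colors by applying the matrices $g_i^{\pm1}$, so every region color $u_j$ is a nonzero scalar multiple of $u_0$, again a common eigenvector with $g_i u_j=m_i u_j$. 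Thus at every crossing the four adjacent region colors satisfy \cref{def:E-nilpotent crossing}, so the crossing is $E$-nilpotent; equivalently, crossing segment $i$ scales the region parameter $a=\evec{1}u$ by $m_i$, which is the criterion $a_i=m_i$ of \cref{thm:parameters at simple crossings}. Since $E$-nilpotent crossings are pinched (and, directly, the standard decoration makes all $v_i$ span the same line, so $[v_1]=[v_2]$ everywhere), every crossing is pinched as well. Admissibility of $\chi$ is then the assertion that all region and segment parameters are nonzero and finite, which follows from the general-position choice of $u_0$ together with \eqref{eq:region parameter} and \eqref{eq:segment parameter}.

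For the last statement, every crossing is pinched, so $(D,\chi)$ admits a standard log-coloring $\mathfrak{f}$ (\cref{def:standard log-coloring}), whose region log-parameters obey \eqref{eq:E-nilpotent alpha} and whose segment log-parameters obey \eqref{eq:pinched beta}. The induced log-meridians are the $\mu_i$. For the log-longitudes, combine \cref{def:induced log-decoration} with \eqref{eq:pinched beta}: at a crossing $X$ of sign $\epsilon$ with local strands $1,2$ one gets $\beta_{1'}-\beta_1=\mu_2$ and $\beta_2-\beta_{2'}=\mu_1$, hence $\lambda_1(X)=\tfrac{\epsilon}{2}\mu_2$ and $\lambda_2(X)=\tfrac{\epsilon}{2}\mu_1$; in both cases the contribution to the log-longitude of a component at $X$ is $\tfrac{\epsilon(X)}{2}$ times the log-meridian of the other component at $X$. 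Summing over all crossings and comparing with \cref{def:linking number} gives $\lambda_i=\sum_j\lk(i,j)\mu_j=(\lk_{L}\mu)_i$, i.e.\ $\mathfrak{s}$ satisfies \eqref{eq:standard log-decoration} and is standard.

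\textbf{Main obstacle.} The only real work is in the middle paragraph: arranging that the eigenvector shadow coloring is admissible. Making the region colors literal eigenvectors of the diagonal Wirtinger matrices is exactly what forces $E$-nilpotence, but it is also what threatens to degenerate a region parameter $a_j=\evec{1}u_j$ or a segment parameter $b_i$, so one must choose $u_0$ — equivalently, put $\rho$ in a suitable gauge — so that none of these vanish; this is the step where the hypotheses that $\rho$ is diagonal and the decoration is standard are genuinely used. The $E$-nilpotence and pinchedness checks and the log-longitude bookkeeping are then routine.
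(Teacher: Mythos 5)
Your proof is correct and essentially the same as the paper's: take $u_0$ to be the common column eigenvector of the $g_i$ complementary to the decoration line, propagate by the shadow rule so every region color is a nonzero multiple of $u_0$, observe that this forces every crossing to be pinched and $E$-nilpotent, and then read $\lambda_i=\sum_j\lk(i,j)\mu_j$ off from \eqref{eq:pinched beta} exactly as you do. The only divergence is how admissibility is arranged. The paper writes $u$ down explicitly as the determinant-dual of the non-decoration row eigenvector $v$, so that $wu=\det\begin{bmatrix}v\\w\end{bmatrix}\ne 0$ is automatic, and declares admissibility from that; you instead invoke a gauge transformation to put the eigenlines in general position. Your route is slightly more honest, because the paper's check $wu\ne0$ only secures $v_iu_i\ne0$ (the denominator of $b_i$), not the region parameters $\evec{1}u_j\ne0$ nor $v_i\evec{2}\ne0$, both of which can vanish for special eigenbases (for instance if the $g_i$ are literally diagonal in the standard basis), so the paper is tacitly relying on the same gauge freedom you make explicit. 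Do note, though, the small logical wrinkle in your phrasing: the lemma is stated for a fixed $\rho$, so replacing $\rho$ by a gauge-equivalent representation technically changes the statement being proved; this is harmless in the one place the lemma is used (\cref{thm:KADO}, where only the gauge-invariant $\qinv{}$ matters), but it would be cleaner to say that $\rho$ may be assumed, after conjugation, to have its eigenlines in general position, and that the conclusion for this conjugate suffices for all downstream applications.
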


\begin{proof}
  Because \(\rho\) is diagonalizable it assigns the same matrix \(g_i\) to each arc in component \(i\) and there is a basis \(v,w\) of \(\mathbb{C}^{2}\) with \(g_i v = m_{i}^{-1} v\) and \(g_i w = m_{i} w\), where \(m_i\) are the distinguished eigenvalues.
  The standard decoration assigns every arc of component \(i\) the eigenspace \([v]\) so by definition it is pinched.
  Let \(u\) be the column vector dual to \(w\) with respect to the determinant pairing on \(\mathbb{C}^{2}\).
  Specifically, if
  \(w =
  \begin{bmatrix}
    c_{0} & c_{1}
  \end{bmatrix}
  \)
  we set
  \[
    u
    =
    \begin{bmatrix}
      -c_{1} \\ c_{0}
    \end{bmatrix}
  \]
  It is not hard to see that \(g_i u = m_i u\) for each \(i\).
  If we assign \(u\) to the topmost region of \(D\) then every region is colored by a nonzero multiple of \(u\) so we get an \(E\)-nilpotent shadow coloring.
  It is admissible because \(v, w\) is a basis, so
  \[
    v u 
    = 
    \det \begin{bmatrix}
      v \\ w
    \end{bmatrix}
    \ne 0
    .
    \qedhere
  \]
\end{proof}

Earlier we chose not to use modified dimensions to normalize \(\qinv{}\) to avoid some technical complications.
Here we do so to match the normalization of \cite{Costantino2012}.
We say a log-meridian \(\mu\) is \defemph{renormalizable} if
\[
  \mu \in \left( \mathbb{C} \setminus \frac{1}{2}\mathbb{Z} \right) \cup \left(\frac{\nr -1}{2} + \frac{\nr}{2} \mathbb{Z}\right)
\]
equivalently if the \defemph{modified dimension}
\begin{equation}
  \label{eq:moddim}
  \operatorname{d}(\mu)
  \defeq 
  \frac{
    \nr
  }{
    \omega^{(\nr-1)(\mu + \frac{1}{2})}
    +
    \omega^{(\nr-3)(\mu + \frac{1}{2})}
    +
    \cdots
    +
    \omega^{(1-\nr)(\mu + \frac{1}{2})}
  }
\end{equation}
is well-defined.
There are various ways to define modified dimensions, but they can be computed as ratios of open Hopf links.
In particular we have
\[
  \operatorname{d}(\mu)
  =
  \frac{
    \qinv{L_{H}, K_{1}; \rho, (\nr - 1)/2, \mu}
  }{
    \qinv{L_{H}, K_{1}; \rho, \mu, (\nr - 1)/2}
  }
  =
  \frac{
    \nr
    }{
    \left\{ \mu + \tfrac{1}{2} \right\}_{\omega}
  }
\]
in terms of the Hopf link invariants computed in \cref{thm:Hopf link values}.

\begin{theorem}
  \label{thm:KADO}
  Let \(L\) be a link, \(\rho\) a reducible representation with a standard decoration, and \(\mu\) a renormalizable choice of log-meridians.
  Let \(K_{i}\) be a component of \(L\) with log-meridian \(\mu_{i}\), and define
  \[
    \kado{L, \mu}
    \defeq
    \operatorname{d}(\mu_{i})
    \qinv{L, K_{i}; \rho, \mu}
  \]
  It is independent of the choice of component \(K_{i}\) and agrees with the invariant \(\mathsf{N}_{r}\) of trivalent \(\mathbb{C}\)-colored graphs defined by \textcite{Costantino2012}, identifying \(\mu\) with a cohomology class as in \cref{rem:cohomology class}.
  In their notation the integer parameter is \(r = \nr\) and our highest weights \(\mu\) correspond to their highest weights via
  \(
    \alpha = 2 \mu - (\nr -1).
  \)
\end{theorem}

\begin{proof}
  First assume \(\rho\) is diagonalizable.
  Choose a diagram \((D, \rho, u)\) of \((L,K)\) as in \cref{thm:diagonal coloring lemma} so that the braiding matrices at every crossing are given by \cref{eq:colored-Jones-R-matrix}.
  We claim that these agree with the action of the truncated \(R\)-matrix of \cite[equation (31)]{Costantino2012}.
  They must agree up to a scalar by \cref{thm:uniqueness of R} and one can check directly that this scalar is \(1\) by examining the action on the tensor product of two highest-weight vectors.
  Furthermore our pivotal structure agrees with the one given in \cite[equation (33)]{Costantino2012}, as does our modified dimension function.
  We conclude that \(\kado{}\) agrees with \(\mathsf{N}_{r}\), so in particular it is independent of the choice of component \(K_{i}\).

  Now consider the case where \(\rho\) is reducible but not necessarily diagonalizable.
  We may gauge-transform \(\rho\) so that, for each meridian \(\mer_{j}\),
  \[
    \rho(\mer_{j}) = 
    \begin{bmatrix}
      e^{\tu \mu_{j}} & 0 \\
      c_{j} & e^{- \tu \mu_{j}}
    \end{bmatrix}
  \]
  \(\qinv{L , K_{i}; \rho, \mu}\) is a holomorphic function of the parameters \(\mu_j\) and \(c_j\), which are coordinates on the reducible part of the log-decorated representation variety.
  Write \(\rho_{0}\) for the diagonal representation obtained by setting each \(c_k\) to \(0\).
  Our claim follows from showing
  \(
    \qinv{L, K_{i}; \rho, \mu}
    =
    \qinv{L, K_{i}; \rho_{0}, \mu}.
  \)
  Write \(\rho_{\kappa}\) for the image of \(\rho\) under a type \(A\) gauge transformation by a diagonal matrix with entries \(\kappa, \kappa^{-1}\).
  Observe that
  \[
    \begin{bmatrix}
      \kappa & 0 \\
      0 & \kappa^{-1}
    \end{bmatrix}^{-1}
    \begin{bmatrix}
      e^{\tu \mu_{j}} & 0 \\
      c_{j} & e^{-\tu \mu_{j}}
    \end{bmatrix}
    \begin{bmatrix}
      \kappa & 0 \\
      0 & \kappa^{-1}
    \end{bmatrix}
    =
    \begin{bmatrix}
      e^{\tu \mu_{j}} & 0 \\
      \kappa^{2} c_{j} & e^{-\tu \mu_{j}}
    \end{bmatrix}
  \]
  so \(\lim_{\kappa \to 0} \rho_{\kappa} = \rho_{0}\).
  Then by continuity and gauge invariance
  \[
    \qinv{L, K_{i}; \rho, \mu}
    =
    \qinv{L, K_{i}; \rho_{\kappa}, \mu}
    =
    \lim_{\kappa \to 0}
    \qinv{L, K_{i}; \rho_{\kappa}, \mu}
    =
    \qinv{L, K_{i}; \rho_{0}, \mu}.
    \qedhere
  \]
\end{proof}

\section{Mirror images and the Reidemeister torsion}
\label{sec:torsion}

The second main result of \cite{McPhailSnyder2020} relates the geometric quantum link invariant \(\operatorname{F}_{\nr}\) of \textcite{Blanchet2018} to the nonabelian Reidemeister torsion.
Because \(\qinv{}\) is a refinement of \(\operatorname{F}_{\nr}\) we get a relationship with the torsion.

\begin{definition}
  The \defemph{mirror image} \((\overline{L}, \overline{\rho}, \overline{\mu})\) of \((L, \rho, \mu)\) has
  \begin{itemize}
    \item \(\overline{L}\) the usual mirror image of \(L\) with  the opposite orientations and framings and
    \item \(\overline{\rho}\) the induced representation of \(\extr{\overline{L}}\).
      In terms of a diagram we invert all the matrix colors \(g_{i}\) to match the orientation reversal.
      We use the same eigenspaces as \(\rho\) to define the decoration of \(\overline{\rho}\), and define
    \item \(\overline{\mu}\) to be the log-decoration with \(\overline{\mu}_{i} = - \mu_{i} - 1\).
      \qedhere
  \end{itemize}
\end{definition}

\begin{bigtheorem}
  \label{thm:torsion relation}
  Let \(L\) be a link, \(K\) a component of \(L\), \(\rho \) a decorated representation of \(\extr{L}\), and \(\mu\) a choice of log-meridians for \(\rho\).
  Assume that no meridian \(x\) of \(L\) has \(\tr \rho(x) = 2\) (equivalently, that no \(\mu_{i}\) is an integer) which ensures that the Reidemeister torsion \(\tau(S^{3} \setminus L, \rho)\) of the complement twisted by \(\rho\) is well-defined as an element of \(\mathbb{C}/(\pm 1)\).
  Then
  \[
    \frac{1}{m + m^{-2} -2}
    \qinv[2]{L, K; \rho, \mu}
    \qinv[2]{\overline{L}, \overline{K}; \overline{\rho}, \overline{\mu}}
    =
    \tau(S^{3} \setminus L, \rho)
  \]
  where \(m = e^{\tu \mu_{K}}\) is the meridian eigenvalue of the component \(K\).
\end{bigtheorem}

Note that unlike in \cite[Theorem 2]{McPhailSnyder2020} there is no extra normalization factor other than the modified dimension \((m + m^{-1} - 2)^{-1}\).

To prove this theorem we need to explain how to take the mirror image of \(\qfunc{}\).
Recall that \(\qcat\) is the category of weight modules for \(\qgrp\).
Let \(\qcatb\) be the category of weight modules for the algebra \(\qgrp^{\operatorname{cop}}\), which is \(\qgrp\) with the opposite comultiplication.

\begin{definition}
  Define a model \(\qfuncb{}\) valued in \(\qcatb\) by
  \[
    \qfuncb{
      \begin{tikzpicture}[baseline=(g.base)]
        \draw (0,0.5) node {\(u\)};
        \draw[->] (0,0) node(g)[left] {\((g, [v], \mu)\)} to (0.1,0);
        \draw (0,-0.5) node {\(gu\)};
      \end{tikzpicture}
    }
    \defeq
    \qfunc{
      \begin{tikzpicture}[baseline=(g.base)]
        \draw (0,0.5) node {\(gu\)};
        \draw[<-] (0,0) node(g)[left] {\((g, [v], \mu)\)} to (0.1,0);
        \draw (0,-0.5) node {\(u\)};
      \end{tikzpicture}
    }
  \]
  and
  \[
    \qfuncb{
      \begin{tikzpicture}[line width=1, scale=1, xscale = 1.5, baseline={(current bounding box.center)}]
        \coordinate (1) at (0,1);
        \coordinate (2) at (0,0);
        \coordinate (3) at (1,0);
        \coordinate (4) at (1,1);
        \tikzBraidingEast{+}{(1)}{(2)}{(3)}{(4)};
        \node[left] at (1) {\(g_{1}\)};
        \node[left] at (2) {\(g_{2}\)};
        \node[right] at (3) {\(g_{1}\)};
        \node[right] at (4) {\(g_{1}^{-1} g_{2} g_{1}\)};
        \node at (0.5,1) {\(u\)};
      \end{tikzpicture}
    }
    \defeq
    \tau
    \qfunc{
      \begin{tikzpicture}[line width=1, scale=1, xscale = 1.5, baseline={(current bounding box.center)}]
        \coordinate (1) at (1,0);
        \coordinate (2) at (1,1);
        \coordinate (3) at (0,1);
        \coordinate (4) at (0,0);
        \draw[->] (1) to[out = 180, in = 00] (3);
        \draw[white, line width=10] (2) \br (4);
        \draw[->] (2) to[out = 180, in = 00] (4);
        \node[right] at (1) {\(g_{1}^{-1} g_{2} g_{1}\)};
        \node[right] at (2) {\(g_{1}\)};
        \node[left] at (3) {\(g_{2}\)};
        \node[left] at (4) {\(g_{1}\)};
        \node at (0.5,0) {\(u\)};
      \end{tikzpicture}
    }
    \tau
  \]
  where \(\tau(x \otimes y) = y \otimes x\) is the swap map.
\end{definition}

The value of \(\qfuncb{}\) on a crossing is not a morphism of \(\qgrp\)-modules, but it \emph{is} a morphism of \(\qgrp^{\operatorname{cop}}\)-modules, which is why it takes values in \(\qcatb\).
To prove that \(\qfuncb{}\) is a model take the proofs for \(\qfunc{}\) and reflect them.
The definition of \(\qfuncb{}\) makes it obvious that it computes the invariant of the mirror image link:

\begin{proposition}
  \label{thm:mirror computes mirror}
  Let \(\qinvb{}\) be the link invariant associated to \(\qfuncb{}\).
  Then
  \[
    \qinvb{L, K; \rho, \mu} = \qinv{\overline{L}, \overline{K}; \overline{\rho}, \overline{\mu}}.\qedhere
  \]
\end{proposition}

Now consider the model \(\mathcal{D}_{\nr} = \qfunc{} \boxtimes \qfuncb{} \), which we can think of as a \(\slg\)-graded quantum double of \(\qfunc{}\) as in \cite[Section 6]{McPhailSnyder2020}.
By \cref{thm:uniqueness of R} the braiding of \(\qfunc{}\) is determined up to a scalar by the outer \(R\)-matrix \(\mathcal{R}\) and a similar result holds for \(\qfuncb{}\) \cite[Section 4.1]{McPhailSnyderThesis} and \(\mathcal{R}^{-1}\).
These conditions similarly characterize the braiding of \(\mathcal{D}_{\nr}\) up to a scalar.

There is a natural way to choose this scalar.
For any coloring \(\chi\) of a point
\[
  \mathcal{D}_{\nr}(\chi) 
  = \qfunc{\chi} \boxtimes \qfuncb{\chi}
  = \qpf{\chi} \boxtimes \qpf{\chi}^{*}
\]
and there is a \defemph{canonical vector}
\[
  v(\chi) = \sum_{n = 0}^{\nr -1} \ket{\beta + n} \boxtimes \bra{\beta + n} \in \mathcal{D}_{\nr}(g, u)
\]
in each \(\mathcal{D}_{\nr}(\chi)\).
It is independent of the choice of \(\beta\) for the same reason the coevaluation of \(\qcat\) is.

\begin{lemma}
  \label{thm:double normalization}
  The braiding of \(\mathcal{D}_{\nr}\) preserves the canonical vectors: for any crossing \(X\),
  \[
    \mathcal{D}_{\nr}(X) \colon
    \mathcal{D}_{\nr}(\chi_{1})
    \otimes
    \mathcal{D}_{\nr}(\chi_{2})
    \to
    \mathcal{D}_{\nr}(\chi_{4})
    \otimes
    \mathcal{D}_{\nr}(\chi_{3})
  \]
  has
  \[
    \mathcal{D}_{\nr}(
      v(\chi_{1}) \otimes v(\chi_{2})
    )
    =
    v(\chi_{4}) \otimes v(\chi_{3})
    .
    \qedhere
  \]
\end{lemma}

\begin{proof}
  Call the colored diagram below \(D\):
  \begin{center}
    \begin{tikzpicture}[line width = 1, scale = 1, xscale = 1.5, baseline={(current bounding box.center)}]
      \coordinate (1) at (0,1);
      \coordinate (2) at (0,0);
      \coordinate (2d) at (0,-1);
      \coordinate (1d) at (0,-2);
      \draw[->] (2d) .. controls (-0.5,-1) and (-0.5, 0) .. (2);
      \draw[->] (1d) .. controls (-1,-2) and (-1, 1) .. (1);
      \node[right] at (1) {\(g_{1}\)};
      \node[right] at (2) {\(g_{2}\)};
      \node[right] at (1d) {\(g_{1}\)};
      \node[right] at (2d) {\(g_{2}\)};
      \node at (-1,1) {\(u\)};
    \end{tikzpicture}
  \end{center}
  Its image under \(\qfunc{}\) is a morphism
  \[
    \qfunc{D}
    :
    \mathbb{C} \to
    \qpf{\chi_{1}}
    \otimes
    \qpf{\chi_{2}}
    \otimes
    \qpf{\chi_{2}}^{*}
    \otimes
    \qpf{\chi_{1}}^{*}
  \]
  which as usual we identify with the vector \(\qfunc{D}(1)\).
  There is an obvious vector space isomorphism
  \[
    \begin{aligned}
      \Phi
      \colon
      &\qpf{\chi_{1}}
      \otimes
      \qpf{\chi_{2}}
      \otimes
      \qpf{\chi_{2}}^{*}
      \otimes
      \qpf{\chi_{1}}^{*}
      \\
      &\to
      (
        \qpf{\chi_{1}}
        \boxtimes
        \qpf{\chi_{1}}^{*}
      )
      \otimes
      (
        \qpf{\chi_{2}}
        \boxtimes
        \qpf{\chi_{2}}^{*}
      )
      =
      \mathcal{D}_{\nr}(\chi_{1}) 
      \otimes
      \mathcal{D}_{\nr}(\chi_{2}) 
    \end{aligned}
  \]
  and \(\Phi(\qfunc{D}) = v(\chi_{1}) \otimes v(\chi_{2})\) is the canonical vector.
  Now consider the diagram \(D'\)
  \begin{center}
  \begin{tikzpicture}[line width = 1, scale = 1, xscale = 1.5, baseline={(current bounding box.center)}]
      \coordinate (1) at (0,1);
      \coordinate (2) at (0,0);
      \coordinate (3) at (1,0);
      \coordinate (4) at (1,1);
      \tikzBraidingEast{+}{(1)}{(2)}{(3)}{(4)};
      \coordinate (3d) at (1,-2);
      \coordinate (4d) at (1,-1);
      \coordinate (2d) at (0,-1);
      \coordinate (1d) at (0,-2);
      \draw[->] (3d) to[out = 180, in = 00] (2d);
      \draw[white, line width=10] (4d) to[out = 180, in = 00] (1d);
      \draw[->] (4d) to[out = 180, in = 00] (1d);
      \node[right] at (3) {\(g_{3}\)};
      \node[right] at (4) {\(g_{4}\)};
      \node[right] at (3d) {\(g_{4}\)};
      \node[right] at (4d) {\(g_{3}\)};
      \draw[->] (2d) .. controls (-0.5,-1) and (-0.5, 0) .. (2);
      \draw[->] (1d) .. controls (-1,-2) and (-1, 1) .. (1);
      \node[above] at (1) {\(g_{1}\)};
      \node[above] at (2) {\(g_{2}\)};
      \node[above] at (1d) {\(g_{1}\)};
      \node[above] at (2d) {\(g_{2}\)};
      \node at (-1,1) {\(u\)};
  \end{tikzpicture}
  \end{center}
  There is a similar vector space identification \(\Phi'\) of the codomain of \(\qfunc{D'}\) and \(\mathcal{D}_{\nr}(\chi_{4}) \otimes \mathcal{D}_{\nr}(\chi_{3})\).
  Unraveling the definitions shows that the action of the braiding of \(\mathcal{D}_{\nr}\) on the canonical vector is \(\Phi'(\qfunc{D'})\):
  \[
    \mathcal{D}_{\nr}(X)(v(\chi_{1}) \otimes v(\chi_{2}))
    =
    \Phi'(\qfunc{D'})
    .
  \]
  But by isotopy invariance
  \[
    \Phi'(
      \qfunc{D'}
    )
    =
    \Phi' \circ
    \qfunc{
    \begin{tikzpicture}[line width = 1, scale = 1, xscale = 1.5, baseline={(current bounding box.center)}]
      \coordinate (1) at (0,1);
      \coordinate (2) at (0,0);
      \coordinate (2d) at (0,-1);
      \coordinate (1d) at (0,-2);
      \draw[->] (2d) .. controls (-0.5,-1) and (-0.5, 0) .. (2);
      \draw[->] (1d) .. controls (-1,-2) and (-1, 1) .. (1);
      \node[right] at (1) {\(g_{4}\)};
      \node[right] at (2) {\(g_{3}\)};
      \node[right] at (1d) {\(g_{4}\)};
      \node[right] at (2d) {\(g_{3}\)};
      \node at (-1,1) {\(u\)};
    \end{tikzpicture}
    }
    =
    v(\chi_{4}) \otimes v(\chi_{3}).
  \qedhere
  \]
\end{proof}

\begin{proof}[Proof of \cref{thm:torsion relation}]
  Now assume \(\nr = 2\).
  In \cite{McPhailSnyder2020} we define a model  \(\mathcal{T}\) which is characterized by the conclusion of \cref{thm:double normalization}, so we must have \(\mathcal{T} = \mathcal{D}_{2}\).
  \cite[Theorem 2]{McPhailSnyder2020} says that the link invariant \(\operatorname{T}\) associated to \(\mathcal{T}\) computes the torsion, or in our normalization that
  \[
    \frac{
      \operatorname{T}(L, K; \rho, \mu)
      }{
      m + m^{-2} - 2
    }
    =
    \tau(S^{3} \setminus L, \rho)
    .
  \]
  Now \cref{thm:mirror computes mirror} finishes the proof.
\end{proof}

\section{\texorpdfstring{\(\qfunc{}\)}{𝒵ψ} is a model}
\label{sec:proofs}

Here we complete the proof of \cref{thm:qfunc defined and properties} by proving that \(\qfunc{}\) is a model of \(\tcat\).
It is obviously regular, so we need to check it is simple and sends the Reidemeister move diagrams to identity maps.

\subsection{Absolute simplicity of modules}%
\label{sec:Absolute simplicity of modules}

\begin{theorem}
  Let \((g, [v], u)\) be an admissible shadow coloring of a point and \(\mu\) a log-meridian.
  Then the  \(\qgrp\)-module
  \[
    V = \qfunc{g, [v], u} = \qpf{a, b, \mu}
  \]
  is \defemph{absolutely simple:}
  every morphism \(f : V \to V\) of \(\qgrp\)-modules is a scalar multiple of the identity.
\end{theorem}

\begin{proof}
  A simple module over an algebraically closed field (here \(\mathbb{C}\)) is absolutely simple by Schur's Lemma.
  We show that in most cases \(V \) is a simple \(\qgrp\)-module and that when this fails it is still absolutely simple.
  The simplicity results are not new.
  Here we give an elementary direct proof.

  Choose \(\alpha, \beta\) with \(\omega^{\nr \alpha} = a\) and \(\omega^{\nr \beta} = b\).
  The vectors
  \[
    v_{n} \defeq \omega^{(n - \alpha) \beta} \wb{\alpha - n}
  \]
  depend only on \(n \mod \nr\).
  They give a basis \(\set{v_{n} \given n \in \mathbb{Z}/\nr \mathbb{Z}}\) with \(\weyl\)-action
  \begin{align*}
    x \cdot v_{n}
    &=
    \omega^{\alpha - n} v_{n}
    \\
    y \cdot v_{n}
    &=
    \omega^{\beta} v_{n - 1}
    \\
    z \cdot v_{n}
    &=
    \omega^{\mu} v_{n}
  \end{align*}
  so 
  \begin{align*}
    K \cdot \vb{n} &= \omega^{\alpha - n} \vb{n}
    \\
    E \cdot \vb{n} &=  \omega^{1/2} \omega^{\beta - \mu}(1 - \omega^{\alpha - \mu + n}) \vb{n - 1}
    \\
    F \cdot \vb{n} &= \omega^{-\beta }(1 - \omega^{- \alpha - \mu + n}) \vb{n + 1}
  \end{align*}
  Observe that
  \begin{itemize}
    \item \( E \cdot \vb{n} = 0\) for some \(n\) if and only if \(a = m^{-1}\), and 
    \item \( F \cdot \vb{n} = 0\) for some \(n\) if and only if \(a = m\).
  \end{itemize}

  Let \(W\) be a nonempty submodule of \(V\).
  Because the action of \(K\) is always diagonalizable \(W\) must contain some \(K\)-eigenspace \(\operatorname{span} \vb{n}\). 
  If \(a \ne m\), then \(E\) acts invertibly and the orbit of \(\vb{n}\) under \(E\) will span \(V\), so \(W = V\) and \(V\) must be simple.
  The same argument works if \(F\) if \(a \ne m^{-1}\).

  The proof now breaks into cases depending on the conjugacy class of \(g\).
  If \(g\) is a diagonalizable, nonscalar matrix, then \(m \not \in \set{ \pm 1}\) so we cannot have both \(a = m\) and \(a = m^{-1}\) and thus \(V\) is simple.
  Now suppose \(g\) is parabolic, i.e.\@ a non-scalar, non-diagonalizable matrix.
  Let \(\epsilon \in \set{\pm 1}\) be the eigenvalue of \(g\).
  We claim that if \((g, [v], u)\) is admissible then \(a \ne \epsilon\) so as above \(V\) is simple.
  To prove the claim, use the parametrization of parabolic matrices in \cite[Section 5.2]{Inoue2014} to write
  \[
    g = 
    \begin{bmatrix}
    \epsilon -x y  & x^{2} \\
-y^{2} & \epsilon + x y
    \end{bmatrix}
  \]
  for a nonzero vector \((x, y) \in \mathbb{C}^{2}\), which spans \([v]\) because \(g\) has only one eigenspace.
  We have \(a = \epsilon\) if and only if \(u\) is a left eigenvector of \(g\), but the left eigenspace of \(g\) is spanned by \((-y, x)\), so if \(u = \alpha(-y,x)\) then
   \[
    vu =
    \alpha
    \begin{bmatrix}
      x & y
    \end{bmatrix}
    \begin{bmatrix}
      -y \\ x
    \end{bmatrix}
    =
    0
  \]
  and the coloring is inadmissible.

  Finally consider the case \(g\) is a scalar matrix.
  We have \(g = \epsilon I_{2}\) for \(\epsilon = \omega^{\nr \mu} \in \set{\pm 1}\) so \(2 \mu \in \mathbb{Z}\) and \(a = m\).
  The isomorphism class of \(V\) depends only on \(\mu \mod \nr\), so we may assume \(2 \mu = k\) or  \(2 \mu = k + \nr\) for \( k \in \set{0, \dots, \nr- 1} \) and that \(\alpha = \mu\).
  Then
  \begin{align*}
    K \cdot v_{n} &= \omega^{\mu - n} v_{n}
    \\
    E \cdot v_{n} &= \omega^{1/2 + \beta - \mu} (1 - \omega^{-n}) v_{n-1}
    \\
    F \cdot v_{n} &= \omega^{- \beta} (1 - \omega^{n - k}) \vb{n+1}
  \end{align*}
  so \(\ker E\) is spanned by \(\vb{0}\) and \(\ker F\) by \(v_{k}\).
  If \(k = \nr - 1\) or \(2\nr -1\) then \(V\) is simple; it is the one of the Steinberg modules denoted \(\operatorname{St}\) or \(\overline{\operatorname{St}}\) by \citeauthor{Suter1994} \cite{Suter1994}.
  Otherwise there is a proper  submodule \(W\) spanned by
  \(
  v_{0}, \dots, v_{k} 
  \)
  which is \((k+1)\)-dimensional and simple.
  The quotient \(V / W\) is an \((N - k - 1)\)-dimensional simple module.

  While in this case \(V\) is not simple, it is still absolutely simple.
  Suppose \(f : V \to V\) is a \(\qgrp\)-module intertwiner.
  In order to commute with the action of \(K\) it must be a diagonal matrix in the basis \(v_{n}\).
  Then commuting with the action of \(E\) ensures it is a scalar matrix.
\end{proof}

\subsection{The \texorpdfstring{\(\reid{2}\)}{R₂} moves}%
\label{sec:The R2 move}
Recall the diagram \(\reid{2a}\) of \cref{fig:reid moves that vanish}.
Our goal is to show that, for any admissible coloring \((\rho, u)\) of \(\reid{2a}\) and any log-decoration \(\mu\),
\[
  \qfunc{\reid{2a}, \rho, u, \mu} = \id.
\]
This will also show \(\qfunc{\reid{2b}} = \id\) because the right inverse of a square matrix is also a left inverse.

Any shadow coloring of \(\reid{2a}\) is as below:
\begin{equation}
  \label{eq:R2+- shadow coloring}
  \begin{tikzpicture}[line width=1, baseline=30, scale=2] 
    \draw[-] (0,0) node[left] {\(( g_{2}, [v_{2}] )\)} \br (1.5,1) ;
    \draw[-] (1.5,1) to (2.5,1);
    \draw (2,1) node[above] { \( ( g_{1}^{-1} g_{2} g_{1}, [v_{2} g_{1}] ) \) };
    \draw[white, line width=10] (0,1) node[left] {} \br (1.5,0) node[right] {};
    \draw[-] (0,1) node[left] {\((g_{1}, [v_{1}])\)} \br (1.5,0) ;
    \draw[-] (1.5,0) to (2.5,0);
    \draw (2,0) node[below] {\((g_{1}, [v_{1}])\)};
    \draw[->] (2.5,1)  \br (4,0) node[right] {\((g_{2}, [v_{2}])\)};
    \draw[white, line width=10] (2.5,0) node[left] {} \br (4,1) node[right] {};
    \draw[->] (2.5,0)  \br (4,1) node[right] {\((g_{1}, [v_{1}])\)};
    \draw (0.75, 1) node[above] { \(u\) };
    \draw (0, 0.5) node[right] { \(g_{1} u\) };
    \draw (4, 0.5) node[left] { \(g_{1} u\) };
    \draw (2, 0.5) node { \(g_{1}^{-1} g_{2} g_{1} u\) };
    \draw (0.75, 0) node { \(g_{2} g_{1} u\) };
  \end{tikzpicture}
\end{equation}
We can compute the segment parameters using \cref{eq:segment parameter}.
Label them as
\begin{equation*}
  \begin{tikzpicture}[line width=1, baseline=30, scale=1.5] 
    \draw[-] (0,0) node[left] {\( b_{2} \)} \br (1.5,1) ;
    \draw[white, line width=10] (0,1) node[left] {} \br (1.5,0) node[right] {};
    \draw[-] (0,1) node[left] {\( b_{1} \)} \br (1.5,0) ;
    \draw[-] (1.5,1) to (2.5,1);
    \draw (2,1) node[above] { \( b_{4} \) };
    \draw[-] (1.5,0) to (2.5,0);
    \draw (2,0) node[below] {\( b_{3} \)};
    \draw[->] (2.5,1)  \br (4,0) node[right] {\( b_{2} \)};
    \draw[white, line width=10] (2.5,0) node[left] {} \br (4,1) node[right] {};
    \draw[->] (2.5,0)  \br (4,1) node[right] {\( b_{1} \)};
  \end{tikzpicture}
\end{equation*}
Assume that the crossing is not pinched; the pinched case follows by taking a limit.
Write \(\mu_{1}, \mu_{2}\) for the log-meridian of the top and bottom strands, respectively.
Choosing logarithms \(\beta_{i}\) of the segment parameters, the operator \(\mathcal{Z} = \qfunc{\reid{2a}, \rho, \mu, u}\) has
\begin{align*}
  \mathcal{Z}\leftfun( \ket{\beta_{1}} \otimes \ket{\beta_{2}} \rightfun)
  =
  \frac{
    1
    }{
    \nr^{2}
  }
  \sum_{n_{3}, n_{4} = 0}^{\nr-1}
  &
  \frac{
    \pf{\beta_{4} + n_{4} - \beta_{1}}
    \pf{\beta_{2} - \beta_{3} - n_{3} + \mu_{2} - \mu_{1}}
  }{
    \pf{\beta_{4} - \beta_{3} + n_{4} - n_{3}+ \mu_{2}}
    \pft{\beta_{2} - \beta_{1} -  \mu_{1}}
  }
  \\
  &\times
  \omega^{
    \mu_{1}(\beta_{3} +n_{3} - \beta_{1}) 
    +
    \mu_{2}(\beta_{2} - \beta_{4} - n_{4})
    -
    \mu_{1} \mu_{2}
  }
  \\
  &\times
  \frac{
    \pft{- \beta_{5} + \beta_{6} - \mu_{1}}
    \pf{- \beta_{3} - n_{3} + \beta_{4} + n_{4} + \mu_{2}}
    }{
    \pft{-\beta_{5}  + \beta_{4} + n_{4}}
    \pft{ - \beta_{3} - n_{3} + \beta_{6} + \mu_{2} - \mu_{1}}
  }
  \\
  &\times
  \omega^{
    \mu_{2}(\beta_{4} + n_{4} - \beta_{6}) 
    +
    \mu_{1}(\beta_{5} - \beta_{3}  - n_{3})
    +
    \mu_{1} \mu_{2}
  }
  \ket{\beta_{5}} \otimes \ket{\beta_{6}}
\end{align*}
where \(\beta_{5}\) is the logarithm of \(b_{1}\) associated to the outgoing segment and similarly for \(\beta_{6}\).
Note that \(\beta_{5} \equiv \beta_{1}, \beta_{6} \equiv \beta_{1} \pmod{\nr \mathbb{Z}}\).
After applying the obvious cancellations we have
\begin{align*}
  \mathcal{Z}\leftfun( \ket{\beta_{1}} \otimes \ket{\beta_{2}} \rightfun)
  =
  \frac{
    1
    }{
    \nr^{2}
  }
  \frac{
    \pf{\beta_{6} - \beta_{5} - \mu_{1}}
    }{
    \pft{\beta_{2} - \beta_{1} - \mu_{1}}
  }
  &
  \sum_{n_{3}= 0}^{\nr-1}
  \frac{
    \pf{\beta_{2} - \beta_{3} - n_{3} + \mu_{2} - \mu_{1}}
    }{
    \pft{\beta_{6} - \beta_{3} - n_{3} + \mu_{2} - \mu_{1}}
  }
  \\
  &\times
  \sum_{n_{4}= 0}^{\nr-1}
  \frac{
    \pf{\beta_{4} - \beta_{1} + n_{4}}
    }{
    \pft{\beta_{4} - \beta_{5} + n_{4}}
  }
  \ket{\beta_{5}} \otimes \ket{\beta_{6}}
\end{align*}
Applying \cref{eq:pf cancellation} twice and using \cref{eq:pf recurrence} gives
\begin{align*}
  \mathcal{Z}\leftfun( \ket{\beta_{1}} \otimes \ket{\beta_{2}} \rightfun)
  &=
  \frac{
    \pft{\beta_{6} - \beta_{5} - \mu_{1}}
    }{
    \pft{\beta_{2} - \beta_{1} - \mu_{1}}
  }
  \nrdirac{\beta_{6} - \beta_{2}}
  \nrdirac{\beta_{5} - \beta_{1}}
  \\
  &\phantom{=}\times
  (1 - b_{2}m_{2}/b_{3}m_{1})^{(\beta_{6} - \beta_{2})/\nr}
  (1 - b_{4}/b_{1})^{(\beta_{5} - \beta_{1})/\nr}
  \ket{\beta_{5}} \otimes \ket{\beta_{6}}
\end{align*}
In particular if \(n_{1}, n_{2} \in \set{0, \dots, \nr -1}\) we have
\[
  \bra{\beta_{1} + n_{1}} \otimes \bra{\beta_{2} + n_{2}}
  \mathcal{Z}\leftfun( \ket{\beta_{1}} \otimes \ket{\beta_{2}} \rightfun)
  =
  \begin{cases}
    1 & n_{1} = n_{2} = 0 \\
    0 & \text{otherwise}
  \end{cases}
\]
which shows that \(\mathcal{Z}\) is the identity operator as claimed.

\subsection{The sideways \texorpdfstring{\(\reid{2}\)}{R₂} moves}%
The sideways moves \(\reid{2a}\) and  \(\reid{2b}\) are similar to the \(\reid{2}\) move.
As before it suffices to check the \(\reid{2b}\) move.
Choose an admissible shadow coloring and log-meridians and assign logarithms of the segment parameters as
\begin{equation}
  \begin{tikzpicture}[line width=1, baseline=30, scale=1.5] 
    \draw[-] (0,1) node[left] {\(\beta_{2}\)} \br (1.5,0) ;
    \draw[white, line width=10] (0,0) \br (1.5,1) ;
    \draw[<-] (0,0) node[left] {\(\beta_{3}\)} \br (1.5,1) ;
    \draw[->] (1.5,0)  \br (3,1) node[right] {\(\beta_{2'}\)};
    \draw[white, line width=10] (1.5,1)  \br (3,0);
    \draw[-] (1.5,1)  \br (3,0) node[right] {\(\beta_{3'}\)};
    \draw (1.5,1) node[above] {\(\beta_{1}\)};
    \draw (1.5,0) node[below] {\(\beta_{4}\)};
  \end{tikzpicture}
\end{equation}
Let \(\mathcal{Z} = \qfunc{D, \rho, u, \mu}\) as before.
Using the expressions for the sideways \(R\)-matrices in \cref{sec:State-sum description} and applying some obvious cancellations we see that
\begin{align*}
  \mathcal{Z}(\ket{\beta_{2}} \otimes \bra{\beta_{3}})
  =
  \frac{
    1
    }{
    \nr^{2}
  }
  \sum_{n_{1}, n_{4} = 0}^{\nr-1}
  &\frac{
    \pf{\beta_{4} - \beta_{1} + n_{4} - n_{1}}
    \pf{\beta_{2} - \beta_{3} + \mu_{2} - \mu_{1}}
    }{
    \pf{\beta_{4} - \beta_{3} + \mu_{2} + \nr - 1 + n_{4}}
    \pft{\beta_{2} - \beta_{1} - \mu_{1} - n_{1}}
  }
  \\
  &\times
  \frac{
    \pft{\beta_{2'} - \beta_{1} - \mu_{1} - \nr + 1 - n_{1}}
    \pf{\beta_{4} - \beta_{3'} + \mu_{2} + n_{4}}
    }{
    \pft{\beta_{4} - \beta_{1} + \nr - 1 + n_{4} - n_{1}}
    \pft{\beta_{2'} - \beta_{3'} + \mu_{2} - \mu_{1}}  
  }
  \\
  &\times
  \omega^{\mu_{1}(\beta_{3} - \beta_{3'} ) + \mu_{2}(\beta_{2} - \beta_{2'})}
  \ket{\beta_{2'}} \otimes \bra{\beta_{3'}}
\end{align*}
Using the definition \eqref{eq:pft def} of \(\pft{}\) we can re-write this as
\begin{align*}
  \mathcal{Z}(\ket{\beta_{2}} \otimes \bra{\beta_{3}})
  =
  &\frac{
    \omega^{\mu_{1}(\beta_{3} - \beta_{3'} ) + \mu_{2}(\beta_{2} - \beta_{2'})}
    }{
    \nr^{2}
  }
  \frac{
    \pf{\beta_{2} - \beta_{3} + \mu_{2} - \mu_{1}}
    }{
    \pft{\beta_{2'} - \beta_{3'} + \mu_{2} - \mu_{1}}
  }
  \ket{\beta_{2'}} \otimes \bra{\beta_{3'}}
  \\
  &\times
  \sum_{n_{1}= 0}^{\nr-1}
  \frac{
    \pf{\beta_{2'} - \beta_{1} - \mu_{1} - n_{1}}
    }{
    \pft{\beta_{2} - \beta_{1} - \mu_{1} - n_{1}}
  }
  \\
  &\times
  \sum_{n_{4}= 0}^{\nr-1}
  \frac{
    \pf{\beta_{4} - \beta_{3'} + \mu_{2} + n_{4}}
    }{
    \pft{\beta_{4} - \beta_{3} + \mu_{2} + n_{4}}
  }
\end{align*}
As before we can use \cref{eq:pf cancellation} to reduce this to some periodic delta functions and quasi-periodicity constants and conclude that \(\mathcal{Z}\) is the identity operator.

\subsection{The \texorpdfstring{\(\reid{3}\)}{R₃} move}%
\label{sec:The R3 move}

Our proof follows \cite{McPhailSnyderAlgebra} but uses the language of shadow colorings and gives more detail.
We need to show that \(\qfunc{\reid{3}, \rho, u, \mu}\) is the identity map for every admissible shadow coloring \((\rho, u)\) and log-decoration \(\mu = (\mu_{1}, \mu_{2}, \mu_{3})\) of the diagram \(\reid{3}\) of \cref{fig:reid moves that vanish:3}.
By \cref{thm:longitude eigenvalues make sense}\ref{thm:longitude eigenvalues make sense:preserved} \(\ell_{i}^{2}(\reid{3}) = 1\) for every component, so by \cref{thm:qfunc braiding mu dependence} \(\qfunc{}\) depends only on the \(\nr\)th roots \(\omega^{\mu_{i}}\) of the meridian eigenvalues, not their logarithms \(\mu_{i}\).
We thus consider the \(\nr^{3}\)-fold covering space
\[
  \widetilde{\mathsf{A}}_{3} \to \admvar{\reid{3}}
\]
whose points are of an admissible shadow coloring \((\rho, u)\) of \(\reid{3}\) and a choice \(\omega^{\mu_{1}}, \omega^{\mu_{2}}, \omega^{\mu_{3}}\) of \(\nr\)th  roots.
By \cref{thm:reid3 is connected} \( \admvar{\reid{3}} \) is connected so it is clear that \(\widetilde{\mathsf{A}}_{3} \) is as well.

\begin{lemma}
  There is a holomorphic function \(\Upsilon : \widetilde{\mathsf{A}}_{3} \to \mathbb{C}\) so that
  \[
    \qfunc{\reid{3}, \tilde{a}} = \Upsilon(\tilde{a}) \id.\qedhere
  \]
\end{lemma}

\begin{proof}
  \(\qfunc{\reid{3}}\) is an endomorphism-valued function on \(\admvar{\reid{3}}\), and by the projective uniqueness of the braiding (\cref{thm:uniqueness of R}) it must be a scalar.
  This scalar is \(\Upsilon\), and it is holomorphic just as in the proof of \cref{thm:qfunc is holomorphic}.
\end{proof}

It remains to show that \(\Upsilon =1\) uniformly.
We can show that it is a root of unity by computing the determinant of \(\qfunc{}\).
More formally, we can obtain a new model \(\extpower \qfunc{}\) from \(\qfunc{}\) by replacing each module \(\qfunc{\chi}\) with its top exterior power  \(\extpower \qfunc{\chi}\) and the braiding morphisms by their induced actions on these.
If we choose bases of the modules (say, by choosing logarithms of the segment parameters) then we can compute the map
\[
  \extpower \qfunc{X}
  \colon
  \extpower \qfunc{\chi_{1}}
  \otimes
  \extpower \qfunc{\chi_{2}}
  \to
  \extpower \qfunc{\chi_{4}}
  \otimes
  \extpower \qfunc{\chi_{3}}
\]
by taking the determinant of the matrix of \(\qfunc{X}\) with respect to these bases.

The positive crossing map \(\extpower \qfunc{X}\) is a map between complex lines.
More concretely, it has a single matrix coefficient (since it is a map between \(1\)-dimensional vector spaces) \(M(\beta)\) depending on a choice of logarithms \(\beta = (\beta_{1}, \dots )\) of the segment parameters at the crossing.
Just like the braiding kernels \(\brkern{\pm}{}{}\) and octahedral functions \(e^{- \octfunc{\pm}{}{}}\) these satisfy quasi-periodicity relations.
Examining these shows that the relations for \(M(\beta)\) and \(e^{- \nr \octfunc{\pm}{}{}}\) are the same.
It therefore makes sense to say that
\begin{equation}
  \label{eq:determinant proportional to cs}
  \extpower \qfunc{X, \rho, u, \mu}
  =
  C(\rho, u, \mu)
  \left( \csfunc{X, \rho, u, \mu} \right)^{-\nr}
\end{equation}
for some scalar-valued function \(C\): both sides are elements of the same vector space of transformations between spaces of quasi-periodic functions.
Since \(C\) is a function defined on the space of colorings of a crossing, we can think of it as a pre-model \(\mathcal{F}_{C}\) valued in \(\vect\) as in \cref{sec:Shadow cocycles}, and we can re-write \eqref{eq:determinant proportional to cs} as
\[
  \extpower \qfunc{} = 
  \mathcal{F}_{C}
  \boxtimes
  \left(\csfunc{}\right)^{-\nr}.
\]
Because we know that \(\csfunc{}\) and thus \(\left(\csfunc{}\right)^{-\nr}\) are models this reduces showing that \(\extpower \qfunc{}\) is a model to checking the same for \(\mathcal{F}_{C}\).

To do this we compute \(C\) by taking determinants.
Translating \cite[Lemma 5.7]{McPhailSnyderAlgebra} to our normalization gives
\[
  \extpower \qfunc{X, \rho, u}
  =
  \left(
    \csfunc{X, \rho, u}
  \right)^{-\nr}
    \frac{
      \nr^{\nr^{2}}
    }{
      \operatorname{D}_{\nr}^{2 \nr}
    }
  \left[
    \frac{
      L(\rho, u)
    }{
      M(\rho, u)^{2} 
    }
  \right]^{\binom{\nr}{2}}
\]
so that
\[
  C(\rho, u)
  = 
    \frac{
      \nr^{\nr^{2}}
    }{
      \operatorname{D}_{\nr}^{2 \nr}
    }
  \left[
    \frac{
      L(\rho, u)
    }{
      M(\rho, u)^{2} 
    }
  \right]^{\binom{\nr}{2}}
\]
Here \(L\) and \(M\) are the generic cocycles defined in \cref{sec:Shadow cocycles} and
\(
  \operatorname{D}_{\nr}
  =
  \df{0}^{\nr}
  =
  \prod_{k = 1}^{\nr -1}
  (1 - \omega^{k})^{k}
\)
is a constant. 
Because \(C\) is a product of generic cocycles it is one as well, so 
\(
  \mathcal{F}_{C}
\)
is a model.

\begin{lemma}
  \(
    \Upsilon^{\nr^3} =  1
  \)
  uniformly.
\end{lemma}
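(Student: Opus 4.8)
The plan is to compute the determinant of the operator $\qfunc{D_3, \widetilde\chi}$ directly and show it equals a (uniform) $\nr^3$-th root of unity, which then forces $\Upsilon^{\nr^3} = 1$ since $\qfunc{D_3, \widetilde\chi} = \Upsilon(\widetilde\chi)\id$ on a space of dimension $\nr^3$. More precisely, the module assigned to each of the three left-hand boundary strands is $\nr$-dimensional, so $\qfunc{\widetilde\chi} = \qfunc{\widetilde\chi_1} \otimes \qfunc{\widetilde\chi_2} \otimes \qfunc{\widetilde\chi_3}$ has dimension $\nr^3$ and $\det \qfunc{D_3, \widetilde\chi} = \Upsilon(\widetilde\chi)^{\nr^3}$. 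Thus it suffices to show $\det \qfunc{D_3, \widetilde\chi} = 1$ for all $\widetilde\chi \in \widetilde{\mathsf A}$, or at least that it is a fixed constant whose value can then be pinned down at one convenient point.

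First I would reduce the determinant of $\qfunc{D_3}$ to a product of determinants of the three braiding operators appearing in the diagram, using multiplicativity of $\det$ under composition of linear maps and the fact that $\det(A \otimes \id) = (\det A)^{\dim}$ for tensor factors. Each braiding operator $\qfunc{X}$ at a crossing is, up to an overall scalar, given by the explicit formula \eqref{eq:R-mat-positive} (or its pinched limit \eqref{eq:R-mat-pinched:formula}) in terms of the cyclic quantum dilogarithms $\qlfname$; its determinant is a product over the matrix indices of the relevant quantum-dilogarithm ratios and the $\omega$-power prefactors. The key point is that the dependence of $\det \qfunc{X}$ on the log-coloring is through $\omega$-powers only, so $\det \qfunc{X}$ is always an $\nr$-th root of unity times an elementary algebraic function of the octahedral parameters; and because $\qfunc{D_3} = \Upsilon \id$ is independent of the choice of log-coloring inducing the given log-decoration, the product of these three determinants must collapse to something manifestly $\nr^3$-th-root-of-unity-valued. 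Concretely, I would show that the algebraic (non-root-of-unity) parts of the three crossing determinants cancel against each other — this cancellation is forced by the fact that the three crossings of $D_3$ glue up into a closed-up configuration with trivial total octahedral data, exactly the kind of cancellation already used in the proof of \cref{thm:braiding is independent of log-coloring} and in \cite{McPhailSnyderAlgebra}.

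After establishing that $\det \qfunc{D_3, \widetilde\chi}$ is locally constant and root-of-unity-valued, continuity of $\widetilde\chi \mapsto \qfunc{D_3, \widetilde\chi}$ on the connected space $\widetilde{\mathsf A}$ forces it to be a single constant $c$ with $c^{\text{(some bounded power)}} = 1$; raising to the $\nr^3$ again gives $\Upsilon^{\nr^3} = c$, and I would evaluate $c$ at a convenient admissible coloring — for instance a pinched, $E$-nilpotent coloring with standard log-coloring, where by \cref{thm:R-mat-pinched} the braiding is Kashaev's $R$-matrix and the determinant of the corresponding $D_3$ configuration is a known, explicitly computable $\nr$-th root of unity (in fact it should come out to be $1$ in the shifted normalization, and a controlled root of unity otherwise). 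Since the problem only asks for $\Upsilon^{\nr^3} = 1$, it is enough to observe that $c$ is an $\nr$-th (or at worst $\nr^2$-th) root of unity at this special point, which suffices.

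\textbf{Main obstacle.} The hard part will be the bookkeeping of the $\omega$-power prefactors and the quantum-dilogarithm ratios in $\det \qfunc{X}$: one must verify that when the three crossing determinants are multiplied and the internal indices of $D_3$ are summed, all the genuinely varying (non-root-of-unity) contributions — the $\qlfname$-values at non-integer arguments, the $K$-type denominators, and the region-parameter powers — cancel exactly, leaving only a product of $\omega$-powers. This is essentially a determinant-level shadow of the Yang–Baxter identity and of the log-coloring independence already proved, so conceptually it should follow, but the explicit computation of $\prod_n \qlf{\zeta^0,\zeta^1}{n}$ and the matching of the six quantum-dilogarithm factors (two per crossing) across the three crossings is where the real work lies. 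I would structure it by first proving a lemma computing $\det \qfunc{X}$ for a single crossing in closed form, then feeding the three such expressions into the composition and invoking the octahedral gluing relations of \cref{thm:crossing parameter relations} to see the cancellation.
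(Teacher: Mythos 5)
Your overall strategy---compute $\det \qfunc{D_3,\widetilde\chi}$, reduce it to a product of single-crossing determinants, and show the product equals $1$---is the same as the paper's. But there is a genuine gap in the central step, and it is not just bookkeeping.

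The key incorrect claim in your sketch is that each crossing determinant is ``an $\nr$th root of unity times an elementary algebraic function of the octahedral parameters,'' with the non-root-of-unity part cancelling by ``the kind of cancellation already used in the proof of \cref{thm:braiding is independent of log-coloring}.'' In fact, by the explicit formula from \cite[Lemma 6.6]{McPhailSnyderAlgebra} the determinant of a single crossing operator contains a factor $\exp(-\nr\,\diloct{\mathfrak f,\epsilon})$, where $\diloct{}$ is the classical (Rogers/lifted) dilogarithm sum over the four tetrahedra at the crossing. This is a genuinely transcendental function of the parameters, not elementary-algebraic. Showing that the product of these factors over the three crossings of $D_3$ equals $1$ is precisely the statement that the classical Chern--Simons invariant $\csfunc{}$ is well-defined under the $\reid{3}$ move, which is a standalone theorem (\cref{thm:CS defined}) whose proof rests on the pentagon/five-term identity of \textcite{Neumann2004} for the lifted dilogarithm. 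That is a different and substantially deeper input than the quasi-periodicity relations used to prove log-coloring independence in \cref{thm:braiding is independent of log-coloring}, so citing that lemma for the cancellation is a mislabel. The remaining (genuinely elementary) factors in the crossing determinant are handled in the paper by showing they are shadow cocycles in the sense of \cref{sec:shadow cocycles}, so that their product over the $\reid{3}$ configuration is trivially $1$ by \cref{thm:shadow cocycle invariants defined}; your sketch does not identify this either, though it is closer to the ``cocycle-style cancellation'' you are gesturing at.

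Your closing paragraph is also internally inconsistent as stated. If you show $\det\qfunc{D_3,\widetilde\chi} = c$ for a constant $c$ (continuity plus discreteness of the range), then $\Upsilon^{\nr^3} = c$, and to conclude $\Upsilon^{\nr^3} = 1$ you need $c = 1$, not merely that $c$ is some root of unity; showing $c$ is a root of unity at one point would instead give a weaker statement like $\Upsilon^{\nr^4} = 1$. Moreover, the evaluation at a special pinched/$E$-nilpotent point is how the \emph{subsequent} step of the paper pins down $\Upsilon$ itself after first establishing that $\Upsilon$ is a root of unity and therefore locally constant; it is not how this lemma is proved, and folding it in here confuses the logical order. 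The actual proof shows $\detfunc{D_3,\widetilde\chi} = 1$ directly (dilogarithm part from $\csfunc{}$-invariance, the rest from the shadow-cocycle property), which already yields $\Upsilon^{\nr^3} = \detfunc{D_3}^{\nr} = 1$.
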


\begin{proof}
  For any point \(\tilde{a} \in \textsf{A}_{3}\) the above computation shows
  \[
    \extpower \qfunc{\reid{3}, \tilde{a}}
    =
    \mathcal{F}_{C}(\reid{3}, \tilde{a})
    \boxtimes
    \left( \csfunc{\reid{3}, \tilde{a}} \right)^{-\nr}
    =
    \id
  \]
  because \(\mathcal{F}_{C}\) and \(\csfunc{}\) hence \((\csfunc{})^{-\nr}\) are models. 
  On the other hand we know that \(\qfunc{\reid{3}} = \Upsilon \id\) is a scalar endomorphism of a \(\nr^{3}\)-dimensional vector space, so
  \[
    \extpower \qfunc{\reid{3}, \tilde{a}}
    =
    \Upsilon(\tilde{a})^{\nr^{3}}
    \id.
    \qedhere
  \]
\end{proof}

We now know that \(\Upsilon\) is a continuous function on a connected space \(\widetilde{\mathsf{A}}_{3}\) valued in the discrete set of \(\nr^{3}\)th roots of unity.
We conclude it is constant, and it remains only to check that the constant is \(1\).
Consider the shadow coloring \((\rho_{0}, u_{0})\) of \(D_{3}\) where each strand is assigned the decorated matrix \(( (-1)^{\nr -1}, (1,1))\) and \(u_{0} = (1,0)\).
If we choose \(\omega^{\mu_{i}} = \omega^{(\nr - 1)/2}\) for each strand, then by \cref{thm:KADO} and \cref{eq:colored-Jones-R-matrix} the braiding at each crossing is the one defining the colored Jones polynomial at \(q = \omega^{1/2}\), equivalently the Kashaev invariant.
Since these satisfy the braid relation \(\qfunc{\reid{3}, \rho_{0} , u_{0}, (\nr - 1)/2}\) is the identity map and thus \(\Upsilon(\rho_{0}, u_{0}, \omega^{(\nr - 1)/2}) = 1\).

\subsection{The \texorpdfstring{\(\reid{1f}\)}{R₁ᶠʳ} move}%
\label{sec:The R1f move}

To show that \(\qfunc{\reid{1f}}\) is the identity we compute the left and right hand positive twists and confirm that they have the same value.

\begin{lemma}
  For any log-meridian \(\mu\),
  \begin{equation}
    \label{eq:twist right hand positive}
    \qfunc{
      \begin{tikzpicture}[line width = 1, scale = 0.75, xscale = 1.5, baseline={(current bounding box.center)}]
        \draw[-] (-0.5,0) to [out = 00, in = 60] (1,-1);
        \draw[-] (1,-1) to [out = -120, in = -60] (0,-1);
        \draw[white, line width=10] (0,-1) to [out = -240, in = 180] (1.5,0);
        \draw[->] (0,-1) to [out = -240, in = 180] (1.5,0);
        \node[left] at (-0.5,0) {\((g, [v], \mu)\)};
        \node[above] at (0.5, 0) {\(u\)};
      \end{tikzpicture}
    }
    =
    \omega^{\mu(\mu + 1 - \nr)}
    \id_{\qfunc{g, [v], \mu}}
  \end{equation}
\end{lemma}

\begin{proof}
  The crossing is always pinched and represents an endomorphism of an absolutely simple object, so the relevant variable assignment for the state-sum is
  \begin{center}
    \begin{tikzpicture}[line width = 1, scale = 1, xscale = 1.5, baseline={(current bounding box.center)}]
      \draw[-] (-0.5,0) to [out = 00, in = 60] (1,-1);
      \draw[-] (1,-1) to [out = -120, in = -60] (0,-1);
      \draw[white, line width=10] (0,-1) to [out = -240, in = 180] (1.5,0);
      \draw[->] (0,-1) to [out = -240, in = 180] (1.5,0);
      \node[left] at (-0.5,0) {\(\beta\)};
      \node[right] at (1.5,0) {\(\beta\)};
      \node[left] at (0,-0.75) {\(\beta + \mu + n \)};
      \node[right] at (1,-0.75) {\(\beta + \mu + n + 1 - \nr\)};
    \end{tikzpicture}
  \end{center}
  where we sum \(n\) from \(0\) to \(\nr - 1\) .
  The matrix coefficients of the braiding are computed in \cref{thm:pinched braiding is defined} for general pinched crossings.
  Here we can directly take the limit of \cref{eq:braiding kernel positive} to get
  \begin{align*}
    &\frac{
      1
      }{
      \nr
    }
    \sum_{n = 0}^{\nr -1}
    \frac{
      \pf{0} \pf{\nr - 1}
      }{
      \pft{n} \pf{\nr - 1 - n}
    }
    \omega^{\mu(1 - \nr + 2n + 2 \mu) - \mu^{2}}
    \\
    &=
    \frac{
      \omega^{\mu(1 - \nr + \mu)}
      }{
      \nr
    }
    \sum_{n = 0}^{\nr -1}
    \frac{
      \pf{0} \pf{\nr - 1}
      }{
      \pf{\nr - 1 + n} \pf{\nr - 1 - n}
    }
    \omega^{n(2 \mu + 1)}
  \end{align*}
  Since \(\pf{\nr}^{-1} = 0\) the summand vanishes unless \(n = 0\) and the state-sum is
  \[
    \frac{
      \omega^{\mu(1 - \nr + \mu)}
      }{
      \nr
    }
    \frac{
      \pf{0} \pf{\nr - 1}
      }{
      \pf{\nr - 1} \pf{\nr - 1}
    }
    =
    \frac{
      \omega^{\mu(1 - \nr + \mu)}
      }{
      \nr
    }
    (1 - \omega) \cdots (1 - \omega^{\nr -1})
    =
    \omega^{\mu(1 - \nr + \mu)}
    .
    \qedhere
  \]
\end{proof}

The computation for the left-hand twist is essentially identical:
now the variable assignment is
\begin{center}
  \begin{tikzpicture}[line width = 1, scale = 1, xscale = 1.5, baseline={(current bounding box.center)}]
    \draw[-] (-0.5,0) to [out = 00, in = -60] (1,1);
    \draw[-] (1,1) to [out = 120, in = 60] (0,1);
    \draw[white, line width=10] (0,1) to [out = 240, in = 180] (1.5,0);
    \draw[->] (0,1) to [out = 240, in = 180] (1.5,0);
    \node[left] at (-0.5,0) {\(\beta\)};
    \node[right] at (1.5,0) {\(\beta\)};
    \node[left] at (0,0.5) {\(\beta + \mu + n \)};
    \node[right] at (1,0.5) {\(\beta + \mu + n + \nr - 1\)};
  \end{tikzpicture}
\end{center}
and one can similarly show that only the \(n = 0\) term contributes \( \omega^{\mu(1 - \nr + \mu)} \).

\appendix

\section{Dilogarithms}%
\label{sec:quantum dilogarithms}
Here we set conventions on classical and quantum dilogarithms and state some known properties used in the main text.
Recall that \(\nr \ge 2\) is an integer and
\[
  \omega \defeq \exp(2\pi \ii /\nr) \text{ and } \omega^{x} \defeq \exp(2\pi \ii x/\nr) \text{ for } x \in \mathbb{C}.
\]
To define the complex logarithm we take a branch cut along the negative real axis and choose \(\Im \log \in (-\pi, \pi]\).

The \defemph{dilogarithm} is the analytic function \(\mathbb{C} \setminus [1, \infty) \to \mathbb{C}\) defined by
\[
  \operatorname{Li}_2(z)
  =
  - \int_{0}^{z} \frac{\log(1 - t)}{t} \, d t
\]
\textcite{Zagier2007} gives an overview of its properties.
To deal with the branch cut, define
\begin{equation}
  \label{eq:ldil-def}
  \ldil{\zeta}
  \defeq
  \frac{
    \operatorname{Li}_2(e^{\tu \zeta})
  }{\tu}
  +
  \zeta \log(1 - e^{\tu \zeta})
\end{equation}
It satisfies the recurrence
\begin{align}
  \label{eq:ldil-0}
  \exp \ldil{\zeta + k}
  &=
  (1 - e^{\tu \zeta})^{k}
  \exp \ldil{\zeta}
\end{align}

\begin{lemma}
  \label{thm:dilog is meromorphic}
  \(\exp \ldil{}\) is a meromorphic function on \(\mathbb{C}\) with zeros at positive integers and poles at negative integers.
\end{lemma}

\begin{proof}
  \(\operatorname{Li}_{2}(z)\) and \(\log(1-z)\) both have branch cuts along \([1, \infty)\), which give potential discontinuities of \(e^{\ldil{\zeta}}\) along the sets \( k - i [0, \infty), k \in \mathbb{Z} \).
  Near one of these cuts
  \[
    e^{\ldil{\zeta}} = \exp \leftfun(
    \frac{
      \operatorname{Li}_{2}(z) + [\log(z) + \tu k ] \log(1-z) 
      }{
        \tu
      } 
      \rightfun)
  \]
  where \(z = e^{\tu \zeta}\).
  Viewing things in the \(z\)-plane, when \(z\) crosses the branch cut from above to below \(\operatorname{Li}_{2}(z)/\tu\) picks up a factor of \(-\log z \) and \([\log z + \tu z] \log(1 - z)\) picks up a factor of \( \log z+ \tu k\).
  In total \(\ldil{\zeta}\) jumps by \(\tu k\) so \(e^{\ldil{\zeta}}\) is continuous across the branch cuts.
  In particular it is continuous in a neighborhood of \(0\), with finite value
  \[
    e^{\ldil{0}} = e^{\operatorname{Li}_{2}(1)/\tu} = e^{\pi \ii /12}.
  \]
  so it is holomorphic near \(0\).
  The claim about poles and zeros follows from \eqref{eq:ldil-0}.
\end{proof}

This lift of \(\operatorname{Li}_{2}\) is slightly different than the one used by \citeauthor{Neumann2004} \cite[Proposition 2.5]{Neumann2004}.
We discuss the relationship further in \cref{sec:recovering Neumann}

Just as \(\csfunc{}\) is defined using \(\ldil{}\) the quantum invariant \(\qfunc{}\) is built with a closely related function called a quantum dilogarithm.
For \(z \in \mathbb{C}\) with \(|\Re z - \tfrac{\nr-1}{2 \nr}| < 1 + \tfrac{1}{\nr}\), define
\[
  \phi_{\nr}(z)
  \defeq
  \int_{\mathbb{R} + \ii \epsilon}
  \frac{
    \exp w[2z -  1 + 1/\nr]
  }{
    4 \sinh(w ) \sinh(w/ \nr)
  }
  \frac{dw}{w}
\]
and set
\[
  \pf{\zeta}
  =
  \pf[\nr]{\zeta}
  \defeq
  e^{\phi_{\nr}(\zeta/\nr)}
\]
It is a version of \defemph{Faddeev's noncompact quantum dilogarithm} \cite[Section 6]{Faddeev2001}
\begin{equation}
  \Phi_{\mathsf b}(z) \defeq
  \exp
  \int_{\mathbb{R} + \ii \epsilon}
  \frac{
    \exp(-2 \ii zw)
  }{
    4 \sinh(w \mathsf b ) \sinh(w/\mathsf b)
  }
  \frac{dw}{w}
\end{equation}
with
\begin{equation}
  \pf[\nr]{\zeta}
  =
  \Phi_{\sqrt \nr}\left(\ii \frac{\zeta}{\sqrt \nr} + \frac{\ii}{2\sqrt \nr} - \frac{\ii \sqrt \nr}{2} \right)
\end{equation}
In particular \(\pf{}\) satisfies the recurrence relation
\begin{equation}
  \label{eq:pf-recurrence}
  \pf{\zeta + k}
  = \pf{\zeta} (1 - \omega^{\zeta + 1})^{-1} (1 - \omega^{\zeta + 2})^{-1} \cdots (1 - \omega^{\zeta + k})^{-1}
  =
  \pf{\zeta}
  \qlog{\zeta}{k}
\end{equation}
where
\[
  \qlog{\zeta}{k}
  \defeq
  \qp{\omega^{\zeta+1}}{k}^{-1}
  =
  (1 - \omega^{\zeta + 1})^{-1} (1 - \omega^{\zeta + 2})^{-1} \cdots (1 - \omega^{\zeta + k})^{-1}
\]
is a version of the \defemph{\(q\)-Pochammer symbol}
\begin{equation}
  \label{eq:qp-def}
  \qp{a}[q]{k}
  \defeq
  \begin{cases}
    (1 - a)(1-aq) \cdots (1 - a q^{k-1})
    &
    k > 0
    \\
    1
    &
    k = 0
    \\
    \left[
      (1-aq^{-1}) \cdots (1 - a q^{k})
    \right]^{-1}
    &
    k < 0
  \end{cases}
\end{equation}

\Cref{eq:pf-recurrence} shows that \(\pf{}\) can be analytically continued to a meromorphic function on \(\mathbb{C}\) with an essential singularity at infinity. 
\(\pf{}\) has zeros at negative integers and poles at positive integers \(n \ge \nr\).
A special case of \eqref{eq:pf-recurrence} gives the essential quasi-periodicity relation
\begin{equation}
  \label{eq:pf-quasiperiodic}
  \pf{\zeta + \nr}
  =
  \pf{\zeta}
  (1 - \omega^{\nr \zeta})^{-1}
  .
\end{equation}

The function
\begin{equation}
  \df{\zeta}
  = \prod_{k=1}^{\nr -1} (1 - \omega^{\zeta + k})^{k/\nr}
  \defeq
  \exp \leftfun( \frac{1}{\nr} \sum_{k=1}^{\nr-1} k \log(1 - \omega^{\zeta + k}) \rightfun)
\end{equation}
is also called a cyclic quantum dilogarithm, and is used by \textcite{Baseilhac2004} in their construction of geometric quantum invariants.
It is closely related to \(\pf{}\) but instead satisfies the relation
\[
  \df{\zeta + k}
  =
  \df{\zeta} (1 - \omega^{\zeta}) \cdots (1 - \omega^{\zeta + k -1}) (1 - \omega^{\nr \zeta})^{-k/\nr}
\]
where the \(\nr\)th root is computed using \(\log\).
This introduces an overall \(\nr\)th root indeterminacy to the quantum invariants defined in \cite{Baseilhac2004}.
By a result \cite[Theorem 1.9]{Garoufalidis2014} of \citeauthor{Garoufalidis2014}, \(\operatorname{D}\) is related to \(\pf{}\) by
\begin{equation}
  \label{eq:exact-qlf-value}
  \pf{\zeta}
  =
  \frac{1 - \omega^{\nr \zeta}}{1 - \omega^{\zeta}}
    e^{-\ldil{\zeta}/\nr}
    \df{\zeta}^{-1}
  =
  \exp \frac{1}{\nr}\leftfun[
  - \ldil{\zeta}
  + \sum_{k=1}^{\nr -1} (\nr - k) \log(1 - \omega^{\zeta + k})
  \rightfun ]
\end{equation}
where \(\ldil{}\) is the lifted classical dilogarithm \eqref{eq:ldil-def}.
One interpretation of \cref{eq:exact-qlf-value} is that \(\qfunc{}\) is the ratio of an invariant defined in terms of  \(\operatorname{D}\) and the \(\nr\)th root of \(\csfunc{}\).
Both of these factors have a power of \(\omega\) indeterminacy which in some sense cancels in \(\qfunc{}\).

It is convenient to abbreviate
\begin{equation}
  \label{eq:pft def}
  \pft{\zeta}
  \defeq
  \pf{\zeta + \nr - 1} \omega^{(\nr-1)\zeta}.
\end{equation}
For example, the inversion relation \cite[equation (16)]{Faddeev2001} can be written as
\begin{equation}
  \label{eq:pf inversion}
  \pf{\zeta} \pft{-\zeta}
  =
  \omega^{-\zeta(\zeta + \nr - 1)/2}
  \exp
  \frac{
    \pi \ii
    }{
    6
  }
  \left(
    3
    -
    \nr
    -  1/\nr
  \right)
\end{equation}
and we can write a key summation identity in a simple form:

\begin{lemma}
  \label{thm:f delta lemma}
  For \(n \in \mathbb{Z}\),
  \begin{equation}
    \label{eq:pf cancellation}
    \sum_{k = 0}^{\nr -1} 
    \frac{
      \pf{\zeta + k}
    }{
      \pft{\zeta + n + k}
    }
    =
    \delta_{\nr}(n)
    \nr
    (1 - \omega^{\nr \zeta})^{-n/\nr}
  \end{equation}
  where \(\delta_{\nr}(k) = 1\) if \(k \in \nr \mathbb{Z}\) and \(0\) otherwise.
\end{lemma}

\begin{proof}
  Writing
   \begin{align*}
    \frac{
      \pf{\zeta + k}
      }{
      \pft{\zeta + n + k}
    }
    &=
    \frac{
      1 - \omega^{\nr \zeta}
      }{
      \omega^{(\nr - 1)(\zeta + n )}
    }
    \frac{
      \pf{\zeta + k}
      }{
      \pf{\zeta + n + k - 1}
    }
    \omega^{k}
    \\
    &=
    \frac{
      1 - \omega^{\nr \zeta}
      }{
      \omega^{(\nr - 1)(\zeta + n)}
    }
    \frac{
      \pf{\zeta }
      }{
      \pf{\zeta + n - 1}
    }
    \frac{
      \qlog{\zeta}{k}
      }{
      \qlog{\zeta + n - 1}{k}
    }
    \\
    &=
    \omega^{-(\nr - 1)(\zeta + n)}
    \frac{
      1 - \omega^{\nr \zeta}
      }{
      1 - \omega^{\zeta + n}
    }
    \frac{
      \pf{\zeta }
      }{
      \pf{\zeta + n}
    }
    \frac{
      \qlog{\zeta}{k}
      }{
      \qlog{\zeta + n - 1}{k}
    }
  \end{align*}
  we see that
  \begin{align*}
    \sum_{k = 0}^{\nr -1} 
    \frac{
      \pf{\zeta + k}
    }{
      \pft{\zeta + n + k}
    }
    =
    \omega^{-(\nr - 1)(\zeta + n)}
    \frac{
      1 - \omega^{\nr \zeta}
      }{
      1 - \omega^{\zeta + n}
    }
    \frac{
      \pf{\zeta }
      }{
      \pf{\zeta + n}
    }
    \fstack{\zeta }{\zeta + n - 1}{1}
  \end{align*}
  in terms of the function \(f\) of \cite[Appendix A.2]{McPhailSnyderAlgebra} studied by \citeauthor{Kashaev1993} \cite{Kashaev1993}.
  By \cite[eq. A.23]{McPhailSnyderAlgebra} (see also \cite[eq. 40]{Garoufalidis2021descendant})
  \begin{align*}
    \fstack{\zeta }{\zeta + n - 1}{1}
    &=
    \nr
    \frac{
      1 - \omega^{\zeta + n}
      }{
      1 - \omega^{\nr \zeta}
    }
    \frac{
      \omega^{(\nr -1)(\zeta + n)}
      }{
      \qlog{\zeta + n}{ \modb{- n}}
    }
    \frac{
      (\omega)_{\modb{-n}  + \nr - 1}
      }{
      (\omega)_{\modb{ n}}
      (\omega)_{\nr - 1}
    }
  \end{align*}
  where \(\modb{n}\) is the element of \(\{0, \dots, \nr -1\}\) with \(n \equiv \modb{n} \mod{\nr} \) and
  \[
    (\omega)_{k} \defeq \qp{\omega}{k} = (1 - \omega) \cdots (1- \omega^{k}).
  \]
  Because \((\omega)_{k} = 0\) for \(k \ge \nr\) this vanishes unless \(n \in \nr \mathbb{Z}\).
  The identity now follows from
  \[
    \pf{\zeta + \ell \nr}
    =
    (1 - \omega^{\nr \zeta})^{-\ell}
    .
    \qedhere
  \]
\end{proof}

\printbibliography

\end{document}